\author{A.A. Vasil'eva}
\title{Estimates for entropy numbers of embedding operators of
function spaces on sets with tree-like structure: some limiting
cases}
\date{}
\begin{document}

\maketitle

\newenvironment{Biblio}{%
                  \renewcommand{\refname}{\footnotesize REFERENCES}%
                  }

\def\inff{\mathop{\smash\inf\vphantom\sup}}
\renewcommand{\le}{\leqslant}
\renewcommand{\ge}{\geqslant}
\newcommand{\sgn}{\mathrm {sgn}\,}
\newcommand{\inter}{\mathrm {int}\,}
\newcommand{\dist}{\mathrm {dist}}
\newcommand{\supp}{\mathrm {supp}\,}
\newcommand{\R}{\mathbb{R}}
\renewcommand{\C}{\mathbb{C}}
\newcommand{\Z}{\mathbb{Z}}
\newcommand{\N}{\mathbb{N}}
\newcommand{\Q}{\mathbb{Q}}
\theoremstyle{plain}
\newtheorem{Trm}{Theorem}
\newtheorem{trma}{Theorem}
\newtheorem{Def}{Definition}
\newtheorem{Cor}{Corollary}
\newtheorem{Lem}{Lemma}
\newtheorem{Rem}{Remark}
\newtheorem{Sta}{Proposition}
\newtheorem{Sup}{Assumption}
\newtheorem{Supp}{Assumption}
\newtheorem{Exa}{Example}
\renewcommand{\proofname}{\bf Proof}
\renewcommand{\thetrma}{\Alph{trma}}
\renewcommand{\theSupp}{\Alph{Supp}}

\section{Introduction}

In \cite{vas_entr} order estimates for entropy numbers of the
embedding operator of a weighted Sobolev space on a John domain
into a weighted Lebesgue space were obtained, as well as estimates
for entropy numbers of a two-weighted summation operator on a
tree. Here we consider some critical cases.

Recall the definition of entropy numbers (see, e.g.,
\cite{piet_op, carl_steph, edm_trieb_book}).

\begin{Def}
Let $X$, $Y$ be normed spaces, and let $T:X\rightarrow Y$ be a
linear continuous operator. Entropy numbers of $T$ are defined by
$$
e_k(T)=\inf \left\{\varepsilon>0:\; \exists y_1, \, \dots, \,
y_{2^{k-1}}\in Y: \; T(B_X) \subset \cup_{i=1}^{2^{k-1}}
(y_i+\varepsilon B_Y) \right\}, \quad k\in \N.
$$
\end{Def}

Kolmogorov, Tikhomirov, Birman and Solomyak \cite{kolm_tikh1,
tikh_entr, birm} studied properties of $\varepsilon$-entropy (this
magnitude is related to entropy numbers of embedding operators).

Denote by $l_p^m$ $(1\le p\le \infty)$ the space $\R^m$ with norm
$$
\|(x_1, \, \dots , \, x_m)\| _{l_p^m}= \left\{
\begin{array}{l}(| x_1 | ^p+\dots+ | x_m | ^p)^{1/p}\text{ for
}p<\infty ,\\ \max \{| x_1 | , \, \dots, \, | x_m |\}\text{ for
}p=\infty .\end{array}\right .
$$
Estimates for entropy numbers of the embedding operator of $l_p^m$
into $l_q^m$ were obtained in the paper of Sch\"{u}tt
\cite{c_schutt} (see also \cite{edm_trieb_book}). Later Edmunds
and Netrusov \cite{edm_netr1}, \cite{edm_netr2} generalized this
result for vector-valued sequence spaces (in particular, for
sequence spaces with mixed norm).

Haroske, Triebel, K\"{u}hn, Leopold, Sickel and Skrzypczak
\cite{kuhn4, kuhn5, kuhn_01, kuhn_01_g, kuhn_05, kuhn_08,
kuhn_leopold, kuhn_tr_mian, har94_1, har94_2, haroske, haroske2,
haroske3, har_tr05} studied the problem of estimating entropy
numbers of embeddings of weighted sequence spaces or weighted
Besov and Triebel--Lizorkin spaces.

Triebel \cite{tr_jat} and Mieth \cite{mieth_15} studied the
problem of estimating entropy numbers of embedding operators of
weighted Sobolev spaces on a ball with weights that have
singularity at the origin.

Lifshits and Linde \cite{lif_linde} obtained estimates for entropy
numbers of two-weighted Hardy-type operators on a semiaxis (under
some conditions on weights). The similar problem for one-weighted
Riemann-Liouville operators was considered in the paper of
Lomakina and Stepanov \cite{step_lom}. In addition, Lifshits and
Linde \cite{lifs_m, l_l, l_l1} studied the problem of estimating
entropy numbers of two-weighted summation operators on a tree.

This paper is organized as follows. In \S 2 we obtain the general
result about upper estimates for entropy numbers of embedding
operators of function spaces on a set with tree-like structure.
The properties of such spaces are almost the same as properties of
function spaces defined in \cite{vas_width_raspr, vas_entr} (see
Assumptions \ref{sup1}, \ref{sup2}, \ref{sup3}), but there are
some differences. In particular, here we suppose that
(\ref{f_pom_f}) holds; this condition cannot be directly derived
from known results for weighted Sobolev and Lebesgue spaces.
Therefore, first we consider some particular cases of function
spaces satisfying Assumptions \ref{supp1}--\ref{supp3} or
\ref{supp1}, \ref{supp2}, \ref{supp4} (see \S 3) and we prove that
these spaces satisfy Assumptions \ref{sup1}--\ref{sup3}. In \S 4
we obtain order estimates for entropy numbers of embedding
operators of weighted Sobolev spaces; to this end, we prove that
under given conditions on weights Assumptions
\ref{supp1}--\ref{supp3} or \ref{supp1}, \ref{supp2}, \ref{supp4}
hold. In \S 5 we obtain order estimates for entropy numbers of
two-weighted summation operators on a tree in critical cases.

\section{Upper estimates for entropy numbers of embedding operators of
function spaces on sets with tree-like structure}

Let us give some notations.

Let $(\Omega, \, \Sigma, \, {\rm mes})$ be a measure space. We say
that sets $A$, $B\subset \Omega$ are disjoint if ${\rm mes}(A\cap
B)=0$. Let $E$, $E_1, \, \dots, \, E_m\subset \Omega$ be
measurable sets, and let $m\in \N\cup \{\infty\}$. We say that
$\{E_i\}_{i=1}^m$ is a partition of $E$ if the sets $E_i$ are
pairwise disjoint and ${\rm mes}\left(\left(\cup _{i=1}^m
E_i\right)\bigtriangleup E\right)=0$.

Denote by $\chi_E(\cdot)$ the indicator function of a set $E$.

Let ${\cal G}$ be a graph containing at most countable number of
vertices. We shall denote by ${\bf V}({\cal G})$ the vertex set of
${\cal G}$. Two vertices are called {\it adjacent} if there is an
edge between them. Let $\xi_i\in {\bf V}({\cal G})$, $1\le i\le
n$. The sequence $(\xi_1, \, \dots, \, \xi_n)$ is called a {\it
path} if the vertices $\xi_i$ and $\xi_{i+1}$ are adjacent for any
$i=1, \, \dots , \, n-1$. If all the vertices $\xi_i$ are
distinct, then such a path is called {\it simple}.

Let $({\cal T}, \, \xi_0)$ be a tree with a distinguished vertex
(or a root) $\xi_0$. We introduce a partial order on ${\bf
V}({\cal T})$ as follows: we say that $\xi'>\xi$ if there exists a
simple path $(\xi_0, \, \xi_1, \, \dots , \, \xi_n, \, \xi')$ such
that $\xi=\xi_k$ for some $k\in \overline{0, \, n}$. In this case,
we set $\rho_{{\cal T}}(\xi, \, \xi')=\rho_{{\cal T}}(\xi', \,
\xi) =n+1-k$. In addition, we denote $\rho_{{\cal T}}(\xi, \,
\xi)=0$. If $\xi'>\xi$ or $\xi'=\xi$, then we write $\xi'\ge \xi$.
This partial order on ${\cal T}$ induces a partial order on its
subtree.

Let ${\cal G}$ be a disjoint union of trees $({\cal T}_j, \,
\xi_j)$, $1\le j\le k$. Then the partial order on each tree ${\cal
T}_j$ induces the partial order on ${\cal G}$.

Given  $j\in \Z_+$, $\xi\in {\bf V}({\cal T})$, we denote
$$
\label{v1v}{\bf V}_j(\xi):={\bf V}_j ^{{\cal T}}(\xi):=
\{\xi'\ge\xi:\; \rho_{{\cal T}}(\xi, \, \xi')=j\}.
$$
For $\xi\in {\bf V}({\cal T})$ we denote by ${\cal T}_\xi=({\cal
T}_\xi, \, \xi)$ the subtree in ${\cal T}$ with vertex set
$$
\{\xi'\in {\bf V}({\cal T}):\xi'\ge \xi\}.
$$

Let ${\cal G}$ be a subgraph in ${\cal T}$. Denote by ${\bf
V}_{\max} ({\cal G})$ and ${\bf V}_{\min}({\cal G})$ the sets of
maximal and minimal vertices in ${\cal G}$, respectively.

Let ${\bf W}\subset {\bf V}({\cal T})$. We say that ${\cal
G}\subset {\cal T}$ is a maximal subgraph on the vertex set ${\bf
W}$ if ${\bf V}({\cal G})={\bf W}$ and any two vertices $\xi'$,
$\xi''\in {\bf W}$ adjacent in ${\cal T}$ are also adjacent in
${\cal G}$. Given subgraphs $\Gamma_1$, $\Gamma_2 \subset {\cal
T}$, we denote by $\Gamma_1 \cap \Gamma_2$ the maximal subgraph in
${\cal T}$ on the vertex set ${\bf V}(\Gamma_1) \cap {\bf
V}(\Gamma_2)$.

Let ${\bf P}=\{{\cal T}_j\}_{j\in \N}$ be a family of subtrees in
${\cal T}$ such that ${\bf V}({\cal T}_j)\cap {\bf V}({\cal
T}_{j'}) =\varnothing$ for $j\ne j'$ and $\cup _{j\in \N} {\bf
V}({\cal T}_j) ={\bf V}({\cal T})$. Then $\{{\cal T}_j\} _{j\in
\N}$ is called a partition of the tree ${\cal T}$. Let $\xi_j$ be
the minimal vertex of ${\cal T}_j$. We say that the tree ${\cal
T}_s$ succeeds the tree ${\cal T}_j$ (or ${\cal T}_j$ precedes the
tree ${\cal T}_s$) if $\xi_j<\xi_s$ and
$$\{\xi\in {\cal T}:\; \xi_j\le \xi<\xi_s\} \subset {\bf V}({\cal
T}_j).$$ If $\Gamma \subset {\cal T}$ is the maximal subgraph on
the vertex set ${\bf W}$, we set ${\bf P}|_{\Gamma} =\{\Gamma \cap
{\cal T}_j\}_{j\in \N}$.

We consider the function spaces on sets with tree-like structure
from \cite{vas_width_raspr, vas_entr}.

\label{xpyq}Let $(\Omega, \, \Sigma, \, {\rm mes})$ be a measure
space, let $\hat\Theta$ be a countable partition of $\Omega$ into
measurable subsets, let $({\cal A}, \, \xi_0)$ be a tree such that
\begin{align}
\label{c_v1_a} \exists c_1\ge 1:\quad {\rm card}\, {\bf
V}_1^{{\cal A}}(\xi)\le c_1, \quad \xi \in {\bf V}({\cal A}),
\end{align}
and let $\hat F:{\bf V}({\cal A}) \rightarrow \hat\Theta$ be a
bijective mapping.

Throughout we consider at most countable partitions into
measurable subsets.

Let $1\le p, \, q< \infty$ be arbitrary numbers. We suppose that,
for any measurable subset $E\subset \Omega$, the following spaces
are defined:
\begin{itemize}
\item the space $X_p(E)$ with seminorm $\|\cdot\|_{X_p(E)}$,
\item the Banach space $Y_q(E)$ with norm $\|\cdot\|_{Y_q(E)}$,
\end{itemize}
which all satisfy the following conditions:
\begin{enumerate}
\item $X_p(\Omega)\subset Y_q(\Omega)$;
\item $X_p(E)=\{f|_E:\; f\in X_p(\Omega)\}$, $Y_q(E)=\{f|_E:\; f\in
Y_q(\Omega)\}$;
\item if ${\rm mes}\, E=0$, then $\dim \, Y_q(E)=\dim \, X_p(E)=0$;
\item if $E\subset \Omega$, $E_j\subset \Omega$ ($j\in \N$)
are measurable subsets, $E=\sqcup _{j\in \N} E_j$, then
\begin{align}
\label{f_xp} \|f\|_{X_p(E)}=\left\| \bigl\{
\|f|_{E_j}\|_{X_p(E_j)}\bigr\}_{j\in \N}\right\|_{l_p},\quad f\in
X_p(E),
\end{align}
\begin{align}
\label{f_yq} \|f\|_{Y_q(E)}=\left\| \bigl\{\|f|_{E_j}\|
_{Y_q(E_j)}\bigr\}_{j\in \N}\right\|_{l_q}, \quad f\in Y_q(E);
\end{align}
\item if $E\in \Sigma$, $f\in Y_q(\Omega)$, then $f\cdot \chi_E\in
Y_q(\Omega)$.
\end{enumerate}

Let ${\cal P}(\Omega)\subset X_p(\Omega)$  be a subspace of finite
dimension $r_0$ and let $\|f\|_{X_p(\Omega)}=0$ for any $f\in
{\cal P}(\Omega)$. For each measurable subset $E\subset \Omega$ we
write ${\cal P}(E)=\{P|_E:\; P\in {\cal P}(\Omega)\}$. Let
$G\subset \Omega$ be a measurable subset and let $T$ be a
partition of $G$. We set
\begin{align}
\label{st_omega} {\cal S}_{T}(\Omega)=\{f:\Omega\rightarrow \R:\,
f|_E\in {\cal P}(E), \; f|_{\Omega\backslash G}=0\}.
\end{align}
If $T$ is finite, then ${\cal S}_{T}(\Omega)\subset Y_q(\Omega)$
(see property 5).

For any finite partition $T=\{E_j\}_{j=1}^n$ of the set $E$ and
for each function $f\in Y_q(\Omega)$ we put
\begin{align}
\label{fpqt} \|f\|_{p,q,T}=\left(\sum \limits _{j=1}^n
\|f|_{E_j}\|_{Y_q(E_j)}
^{\sigma_{p,q}}\right)^{\frac{1}{\sigma_{p,q}}}
\end{align}
with $\sigma_{p,q}=\min\{p, \, q\}$.  Denote by $Y_{p,q,T}(E)$ the
space $Y_q(E)$ with the norm $\|\cdot\|_{p,q,T}$. Notice that
$\|\cdot\| _{Y_q(E)}\le \|\cdot\|_{p,q,T}$.

For each subtree ${\cal A}'\subset {\cal A}$ we set $$\Omega
_{{\cal A}'}=\cup _{\xi\in {\bf V}({\cal A}')} \hat F(\xi).$$

In \cite{vas_entr} upper estimates for entropy numbers of the
embedding operator of the space $\hat X_p(\Omega)$ into
$Y_q(\Omega)$ were obtained under some conditions on these spaces
(the space $\hat X_p(\Omega)\cong X_p(\Omega)/ {\cal P}(\Omega)$
will be defined later). Some limiting relations between the
parameters were not considered. Here we investigate one of those
critical cases.

Throughout we assume that $1<p<q<\infty$ and the following
conditions hold.

\begin{Sup}
\label{sup1} There exist a partition $\{{\cal A}_{t,i}\}_{t\ge
t_0, \, i\in \hat J_t}$ of the tree ${\cal A}$ and a number $c\ge
1$ with the following properties.
\begin{enumerate}
\item If the tree ${\cal A}_{t',i'}$ follows the tree ${\cal
A}_{t,i}$, then $t'=t+1$.
\item For each vertex $\xi_*\in {\bf V}({\cal A})$ there exists a
linear continuous projection $P_{\xi_*}:Y_q(\Omega)\rightarrow
{\cal P}(\Omega)$ such that for any function $f\in X_p(\Omega)$
and for any subtree ${\cal D}\subset {\cal A}$ rooted at $\xi_*$
\begin{align}
\label{f_pom_f} \|f-P_{\xi_*}f\|_{Y_q(\Omega_{{\cal D}})}^q \le c
\sum \limits _{t=t_0}^\infty \sum \limits _{i\in J_{t,{\cal D}}}
2^{\left(1-\frac{q}{p}\right)t} \|f\|^q_{X_p(\Omega _{{\cal
D}_{t,i}})};
\end{align}
here
\begin{align}
\label{jtd} J_{t,{\cal D}} =\{i\in \hat J_t:\; {\bf V}({\cal
A}_{t,i}) \cap {\bf V}({\cal D}) \ne \varnothing\}, \quad {\cal
D}_{t,i} ={\cal A}_{t,i} \cap {\cal D}.
\end{align}
\end{enumerate}
\end{Sup}

\begin{Sup}
\label{sup2} There exist numbers $\delta_*>0$ and $c_2\ge 1$ such
that for each vertex $\xi\in {\bf V}({\cal A})$ and for any $n\in
\N$, $m\in \Z_+$ there exists a partition $T_{m,n}(G)$ of the set
$G=\hat F(\xi)$ with the following properties:
\begin{enumerate}
\item ${\rm card}\, T_{m,n}(G)\le c_2\cdot 2^mn$.
\item For any $E\in T_{m,n}(G)$ there exists a linear continuous operator
$P_E:Y_q(\Omega)\rightarrow {\cal P}(E)$ such that for any
function $f\in X_p(\Omega)$
\begin{align}
\label{fpef} \|f-P_Ef\|_{Y_q(E)}\le
(2^mn)^{-\delta_*}2^{\left(\frac 1q-\frac 1p\right)t} \|f\|
_{X_p(E)},
\end{align}
where $t\ge t_0$ is such that $\xi \in \cup _{j\in \hat J_t}{\bf
V}({\cal A}_{t,j})$.
\item For any $E\in T_{m,n}(G)$
\begin{align}
\label{ceptm} {\rm card}\,\{E'\in T_{m\pm 1,n}(G):\, {\rm
mes}(E\cap E') >0\} \le c_2.
\end{align}
\end{enumerate}
\end{Sup}

\begin{Sup}
\label{sup3} There exist numbers $\gamma_*>0$, $c_3\ge 1$ and an
absolutely continuous function $\psi_*:(0, \, \infty) \rightarrow
(0, \, \infty)$ such that $\lim \limits _{y\to \infty}
\frac{y\psi_*'(y)}{\psi_*(y)}=0$ and for $\nu_t:= \sum \limits
_{i\in \hat J_t} {\rm card}\, {\bf V}({\cal A}_{t,i})$ the
following estimate holds:
\begin{align}
\label{nu_t_k1} \nu_t\le c_3\cdot 2^{\gamma_*2^{t}}
\psi_*(2^{2^{t}})=: c_3 \overline{\nu}_t,\quad t\ge t_0.
\end{align}
\end{Sup}

Assumption \ref{sup1} together with the inequality $p<q$ implies
that for any $t\ge t_0$ and for each vertex $\xi_*\in {\bf
V}^{\cal A}_{t-t_0}(\xi_0)$ there exists a linear continuous
projection $P_{\xi_*}:Y_q(\Omega)\rightarrow {\cal P}(\Omega)$
such that for any function $f\in X_p(\Omega)$ and for any subtree
${\cal D}\subset {\cal A}$ rooted at $\xi_*$
\begin{align}
\label{f_pom_f_cor} \|f-P_{\xi_*}f\|_{Y_q(\Omega_{{\cal D}})} \le
c \cdot 2^{\left(\frac 1q-\frac{1}{p}\right)t} \|f\|_{X_p(\Omega
_{{\cal D}})}.
\end{align}
Hence, Assumptions 1--3 from \cite{vas_width_raspr, vas_entr} hold
with $\lambda_*=\mu_*=\frac 1p-\frac 1q$ and $u_*\equiv 1$. In
particular, there exist a linear continuous projection $\hat
P:Y_q(\Omega) \rightarrow {\cal P}(\Omega)$ and a number $M>0$
such that for any function $f\in X_p(\Omega)$ the following
estimate holds:
$$\|f-\hat Pf\|_{Y_q(\Omega)} \le M\|f\|_{X_p(\Omega)}.$$ As
$\hat P$ we take the operator $P_{\xi_0}$ (recall that $\xi_0$ is
the root of ${\cal A}$). Similarly as in \cite{vas_entr} we set
$$
\hat X_p(\Omega) =\{f-\hat Pf:\; f\in X_p(\Omega)\}
$$
and denote by $I$ the embedding operator of $\hat X_p(\Omega)$
into $Y_q(\Omega)$.

We set $\mathfrak{Z}_0=(p, \, q, \, c_1, \, c_2,\, c_3,  \, c, \,
\delta_*,\, \gamma_*, \, \psi_*)$.

We use the following notations for order inequalities. Let $X$,
$Y$ be sets, and let $f_1$, $f_2:\ X\times Y\rightarrow
\mathbb{R}_+$. We write $f_1(x, \, y)\underset{y}{\lesssim} f_2(x,
\, y)$ (or $f_2(x, \, y)\underset{y}{\gtrsim} f_1(x, \, y)$) if
for any $y\in Y$ there exists $c(y)>0$ such that $f_1(x, \, y)\le
c(y)f_2(x, \, y)$ for any $x\in X$; $f_1(x, \,
y)\underset{y}{\asymp} f_2(x, \, y)$ if $f_1(x, \, y)
\underset{y}{\lesssim} f_2(x, \, y)$ and $f_2(x, \,
y)\underset{y}{\lesssim} f_1(x, \, y)$.

\begin{Trm}
\label{main_tree} Suppose that Assumptions \ref{sup1}--\ref{sup3}
hold. Then
\begin{align}
\label{main_est} e_n(I:\hat X_p(\Omega) \rightarrow Y_q(\Omega))
\underset{\mathfrak{Z}_0}{\lesssim} n^{\frac 1q-\frac 1p}.
\end{align}
\end{Trm}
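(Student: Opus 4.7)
The plan is to estimate $e_n(I)$ by a two-scale discretization of $I$: vertically along the tree ${\cal A}$, using the level structure $\{{\cal A}_{t,i}\}$ of Assumption~\ref{sup1}, and horizontally within each cell $\hat F(\xi)$, using the partitions $T_{m,n}$ of Assumption~\ref{sup2}. The subadditivity $e_{k_1+\cdots+k_s}(S_1+\cdots+S_s)\le \sum_j e_{k_j}(S_j)$ together with the trivial $e_n(S)\le \|S\|$ reduces the task to estimating the entropy of a finite telescoping sum of piecewise-polynomial approximations.

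First, I would fix $n$ and choose a truncation depth $T=T(n)$ determined by Assumption~\ref{sup3}: one wants $\sum_{t\le T}\nu_t\asymp n$, which by (\ref{nu_t_k1}) forces $2^T\asymp \log_2 n$ up to slowly-varying factors (these are controlled by the hypothesis $\lim_{y\to\infty} y\psi_*'(y)/\psi_*(y)=0$ and can be absorbed into the constants hidden in $\underset{\mathfrak{Z}_0}{\lesssim}$). For vertices $\xi_*\in {\bf V}^{\cal A}_{T-t_0}(\xi_0)$ I would apply the projections of Assumption~\ref{sup1}: (\ref{f_pom_f_cor}) then bounds the combined error on all subtrees rooted at level $T-t_0$ by $c\cdot 2^{(1/q-1/p)T}\|f\|_{X_p(\Omega)}$, which is of the desired order $n^{1/q-1/p}$ (again modulo slowly-varying factors). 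For the body $t_0\le t<T$ I would build a telescoping approximation: at level $t$, refine each cell $\hat F(\xi)$ with $\xi\in \bigcup_i {\bf V}({\cal A}_{t,i})$ by the partitions $T_{m_t,n_t}$, using property~3 of Assumption~\ref{sup2} to ensure compatibility between levels $t$ and $t+1$. The difference between successive approximations is piecewise-polynomial on a common refinement, and (\ref{f_xp})--(\ref{f_yq}) identify it with an operator essentially isomorphic to a weighted $l_p^{N_t}\hookrightarrow l_q^{N_t}$ embedding whose scale is $(2^{m_t}n_t)^{-\delta_*}2^{(1/q-1/p)t}$ by (\ref{fpef}); Sch\"utt's theorem (the $p<q$ case) controls its entropy.

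The main obstacle is the simultaneous balancing in the final sum. One has to choose $T$, the sub-partition parameters $m_t,n_t$, and the entropy-budget allocation $n=\sum_{t\le T}k_t$ so that the tail bound and all level contributions are of the same order $n^{1/q-1/p}$, while keeping the geometric factor $\sum_{t_0\le t<T}2^{(1/q-1/p)t}$ convergent (which is exactly where $p<q$ is used). The critical nature of the estimate, reflected in the doubly-exponential tree growth permitted by Assumption~\ref{sup3}, means that the classical polynomial approximation rate degenerates; tracking the cumulative $\log n$ factors and verifying that they are indeed absorbed by $\psi_*$ and by the freedom in distributing $k_t$ across the levels is the delicate step that sets this limiting case apart from the analysis in \cite{vas_entr}.
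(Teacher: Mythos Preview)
Your tail estimate is wrong, and this is not a slowly-varying correction but the heart of the critical case. With $2^{T}\asymp \log n$ (which is indeed what $\sum_{t\le T}\overline{\nu}_t\asymp n$ forces), formula~(\ref{f_pom_f_cor}) gives a tail error of order $2^{(1/q-1/p)T}\asymp(\log n)^{1/q-1/p}$, not $n^{1/q-1/p}$. Since $1/q-1/p<0$, the former is polynomially larger than the target. To make the tail error $\lesssim n^{1/q-1/p}$ you would need $2^{T}\asymp n$, but at that depth the number of vertices $\nu_T$ is of order $2^{\gamma_* n}$, so a direct Sch\"utt-type argument with a budget of $n$ entropy units is hopeless.

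This mismatch between the two natural cutoffs is precisely what the paper has to resolve. It introduces both $t_*(n)$ (with $2^{t_*(n)}\asymp \log n$, your $T$) and $t_{**}(n)$ (with $2^{t_{**}(n)}\asymp n$); beyond $t_{**}(n)$ the tail is trivially small by~(\ref{fqf}), and below $t_*(n)$ your telescoping scheme does work (Lemmas~\ref{sum_qt_est} and~\ref{ptm_sum}). The intermediate zone $t_*(n)\le t<t_{**}(n)$ carries the whole difficulty and requires two genuinely new ingredients absent from your outline: Lemma~\ref{main_lem} constructs, for each $f\in B\hat X_p(\Omega)$, an $f$-dependent partition of the tree (via Lemma~\ref{lemma_o_razb_dereva1}) so that Assumption~\ref{sup1}'s refined inequality~(\ref{f_pom_f}) yields an error $\lesssim n^{1/q-1/p}$, and then applies Lifshits' Theorem~\ref{lifs_sta} to pass from this nonlinear approximation to an entropy bound (the cardinality of admissible partitions being controlled by~(\ref{logn})); separately, the telescoping $\sum_m(\tilde Q_{n,m+1}-\tilde Q_{n,m})$ over the same zone is handled not by Sch\"utt but by the K\"uhn--Leopold--Sickel--Skrzypczak estimate for diagonal operators with logarithmic-type weights (Theorem~\ref{kuhn_seq}). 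Your last paragraph correctly senses that something delicate happens here, but the actual mechanism is not a matter of tracking $\log n$ factors through the sub-critical scheme --- the scheme itself has to change.
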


Similarly as in \cite{vas_width_raspr}, \cite{vas_entr} we
introduce some more notation.
\begin{itemize}
\item $\hat \xi_{t,i}$ is the minimal vertex of the tree ${\cal
A}_{t,i}$.
\item $G_t=\cup_{\xi\in {\bf V}(\Gamma_t)}\hat F(\xi)=\cup_{i\in \hat
J_t} \Omega_{{\cal A}_{t,i}}$.
\item $\tilde \Gamma_t$ is the maximal subgraph
on the vertex set $\cup _{j\ge t} {\bf V}(\Gamma_j)$, $t\in \N$.
\item $\{\tilde {\cal A}_{t,i}\}_{i\in \overline{J}_t}$
is the set of connected components of the graph $\tilde \Gamma
_t$.
\item $\tilde U_{t,i}=\cup _{\xi\in {\bf V}(\tilde{\cal
A}_{t,i})}\hat F(\xi)$.
\item $\tilde U_t =\cup _{i\in \overline{J}_t} \tilde U_{t,i} =\cup
_{\xi\in {\bf V}(\tilde \Gamma_t)} \hat F(\xi)$.
\end{itemize}
If $t\ge t_0$, then
\begin{align}
\label{v_min_gt} {\bf V}_{\min}(\tilde \Gamma_t)= {\bf
V}_{\min}(\Gamma_t)=\{\hat \xi_{t,i}\}_{i\in \hat J_t}
\end{align}
(see \cite{vas_width_raspr}); hence, we may assume that
\begin{align}
\label{ovrl_it_eq_hat_it} \overline{J}_t=\hat J_t, \quad t\ge t_0.
\end{align}
The set $\hat J_{t_0}$ is a singleton. Denote $\{i_0\}=\hat
J_{t_0}$.

In \cite{vas_entr} the operators $Q_t$ and $P_{t,m}$ were defined
as follows.

{\bf Definition of the operator $Q_t$.} For each $t\ge t_0$, $i\in
\hat J_t\stackrel{(\ref{ovrl_it_eq_hat_it})}{=}\overline{J}_t$
there exists a linear continuous operator $\tilde P_{t,i}:
Y_q(\Omega)\rightarrow {\cal P}(\Omega)$ such that for any
function $f\in \hat X_p(\Omega)$ and for any subtree ${\cal
A}'\subset {\cal A}$ rooted at $\hat \xi_{t,i}$
\begin{align}
\label{ftp_ti_mod} \|f-\tilde P_{t,i}f\|_{Y_q(\Omega_{{\cal A}'})}
\stackrel{(\ref{f_pom_f_cor})}{\underset{\mathfrak{Z}_0}{\lesssim}}
2^{-\left(\frac 1p-\frac 1q\right)t}\|f\| _{X_p(\Omega_{{\cal
A}'})}.
\end{align}
As $\tilde P_{t,i}$ we take $P_{\hat \xi_{t,i}}$. From the
definition of the space $\hat X_p(\Omega)$ and of the operator
$\hat P$ it follows that $\tilde P_{t_0,i_0}|_{\hat
X_p(\Omega)}=0$ (see \cite{vas_entr}).

We set
\begin{align}
\label{qtf_x} \begin{array}{c} Q_tf(x)=\tilde P_{t,i}f(x)=P_{\hat
\xi_{t,i}}f(x) \; \text{for} \; x\in \tilde U_{t,i}, \; i\in
\overline{J}_t, \\ Q_tf(x)=0\; \text{for} \; x\in \Omega
\backslash \tilde U_t, \end{array}
\end{align}
\begin{align}
\label{tt_def} T_t=\{U_{t+1,i}\}_{i\in \overline{J}_{t+1}}.
\end{align}
Since $p< q$, we have for any $f\in B\hat X_p(\Omega)$
\begin{align}
\label{pp3_11} \|f-Q_tf\| _{Y_q(\tilde U_t)}\le \|f-Q_tf\|
_{Y_{p,q,T_{t-1}}(\tilde U_t)}
\stackrel{(\ref{ftp_ti_mod})}{{\underset{\mathfrak{Z}_0}
{\lesssim}}} 2^{-\left(\frac 1p-\frac 1q\right)t}.
\end{align}
Notice that if $t<t_0$, then $Q_tf=Q_{t+1}f=0$ (since $\tilde
P_{t,i_0}=0$ for $t<t_0$).

Throughout we set $\log x:=\log_2 x$.

{\bf Definition of the operators $P_{t,m}$.} For $t\ge t_0$ we set
\begin{align}
\label{mt_def} m_t=\lceil \log \nu_t\rceil.
\end{align}
In \cite{vas_width_raspr} for each $m\in \Z_+$ the set $G_{m,t}
\subset G_t$, the partition \label{tttm}$\tilde T_{t,m}$ of the
set $G_{m,t}$ and the linear continuous operator
\begin{align}
\label{ptm_yq} P_{t,m}:Y_q(\Omega) \rightarrow {\cal S}_{\tilde
T_{t,m}}(\Omega)
\end{align}
we constructed. Here the following properties hold:
\begin{enumerate}
\item $G_{m,t}\subset G_{m+1,t}$, $G_{m_t,t}=G_t$;
\item for any $m\in \Z_+$
\begin{align}
\label{cttm} {\rm card}\, \tilde T_{t,m}
\underset{\mathfrak{Z}_0}{\lesssim} 2^m;
\end{align}
\item for any function $f\in \hat X_p(\Omega)$ and for any set
$E\in \tilde T_{t,m}$
\begin{align}
\label{fpttm1} \|f-P_{t,m}f\|_{Y_q(E)}
\underset{\mathfrak{Z}_0}{\lesssim} 2^{-\left(\frac 1p-\frac
1q\right)t} \|f\|_{X_p(E)}, \quad m\le m_t,
\end{align}
\begin{align}
\label{fpttm2} \|f-P_{t,m}f\|_{Y_q(E)}
\underset{\mathfrak{Z}_0}{\lesssim} 2^{-\left(\frac 1p-\frac
1q\right)t}\cdot 2^{-\delta_*(m-m_t)}\|f\|_{X_p(E)}, \quad m> m_t;
\end{align}
\item for any set $E\in \tilde T_{t,m}$
\begin{align}
\label{card_et} {\rm card}\, \{E'\in \tilde T_{t,m\pm 1}:\ {\rm
mes} (E\cap E')>0\} \underset{\mathfrak{Z}_0}{\lesssim} 1.
\end{align}
\end{enumerate}
Moreover, we may assume that
\begin{align}
\label{tmtt} \tilde T_{m_t,t}=\{\hat F(\xi)\}_{\xi \in {\bf
V}(\Gamma_t)},
\end{align}
\begin{align}
\label{ptmt_def} P_{t,m_t}f|_{\hat F(\xi)}=P_\xi f|_{\hat F(\xi)},
\quad \xi\in {\bf V}(\Gamma_t)
\end{align}
(it follows from the construction in \cite[p.
37--40]{vas_width_raspr}).

\smallskip

Let
\begin{align}
\label{t_st_n} t_*(n)=\min \{t\in \N:\; \overline{\nu}_t\ge n\},
\end{align}
\begin{align}
\label{t_st_st_n} t_{**}(n)=\min \{t\in \N:\; \overline{\nu}_t\ge
2^n\}.
\end{align}
Then
\begin{align}
\label{2tn_est} 2^{t_*(n)} \underset{\mathfrak{Z}_0}{\asymp} \log
n, \quad 2^{t_{**}(n)} \underset{\mathfrak{Z}_0}{\asymp} n
\end{align}
(see \cite[formula (49)]{vas_entr}).

For any $f\in \hat X_p(\Omega)$ the following equality holds:
\begin{align}
\label{f_razl} \begin{array}{c} f=\sum \limits _{t=t_0}^{t_*(n)-1}
(Q_{t+1}f-Q_tf) \chi _{\tilde U_{t+1}} + \\ +\sum \limits
_{t=t_0}^{t_*(n)-1} \sum \limits_{m=0}^\infty
(P_{t,m+1}f-P_{t,m}f)\chi _{G_{m,t}}+ (f-Q_{t_*(n)}f)\chi _{\tilde
U_{t_*(n)}} \end{array}
\end{align}
(it can be proved similarly as formula (82) in
\cite{vas_width_raspr}).

We set
\begin{align}
\label{til_q_def} \tilde Qf|_{G_t}=P_{t,m_t}f|_{G_t}, \quad t\ge
t_0,
\end{align}
\begin{align}
\label{qnm_def}
\begin{array}{c}
\tilde Q_{n,m}f|_{G_t}=P_{t,m_t+m}f|_{G_t} \quad \text{for} \quad
t_*(n)\le t<t_{**}(n), \quad m\in \Z_+,\\ \tilde Q_{n,m}f|_{G_t}=0
\quad \text{for} \quad t<t_*(n)\quad \text{or} \quad t\ge
t_{**}(n).
\end{array}
\end{align}
Since $\|f-P_{t,m_t}f\|_{Y_q(\hat F(\xi))}
\stackrel{(\ref{fpttm1}),
(\ref{tmtt})}{\underset{\mathfrak{Z}_0}{\lesssim}} 2^{-\left(\frac
1p-\frac 1q\right)t}\|f\|_{X_p(\hat F(\xi))}$ for any $\xi \in
{\bf V}(\Gamma_t)$ and the space $Y_q(\Omega)$ is Banach, we get
from the inequality $p<q$ that for any $f\in \hat X_p(\Omega)$ the
inclusion $\tilde Qf \in Y_q(\Omega)$ holds and
\begin{align}
\label{fqf} \|f-\tilde Qf\|_{Y_q(\tilde U_{t_{**}(n)})}
\underset{\mathfrak{Z}_0}{\lesssim} 2^{\left(\frac 1q-\frac
1p\right)t_{**}(n)} \|f\|_{X_p(\Omega)}
\stackrel{(\ref{2tn_est})}{\underset{\mathfrak{Z}_0}{\lesssim}}
n^{\frac 1q-\frac 1p} \|f\|_{X_p(\Omega)}.
\end{align}
Further,
\begin{align}
\label{fmqtsnf} \begin{array}{c} (f-Q_{t_*(n)}f)\chi _{\tilde
U_{t_*(n)}} = (\tilde Qf-Q_{t_*(n)}f)\chi _{\tilde U_{t_*(n)}} +
\\
+ \sum \limits _{m=0}^\infty (\tilde Q_{n,m+1}f-\tilde Q_{n,m}f)
+(f -\tilde Qf) \chi _{\tilde U_{t_{**}(n)}}; \end{array}
\end{align}
indeed, from (\ref{fpttm2}), (\ref{til_q_def}) and (\ref{qnm_def})
it follows that $\sum \limits _{m=0}^\infty (\tilde
Q_{n,m+1}f-\tilde Q_{n,m}f)=(f-\tilde Qf)\chi _{\tilde U_{t_*(n)}
\backslash \tilde U_{t_{**}(n)}}$.

\begin{Lem}
\label{sum_qt_est} There exists a sequence
$\{k_t\}_{t=t_0}^{t_{*}(n)-1}\subset \N$ such that
\begin{align}
\label{slttkt} \sum \limits _{t=t_0}^{t_{*}(n)-1}(k_t-1)
\underset{\mathfrak{Z}_0}{\lesssim} n,
\end{align}
\begin{align}
\label{sum_ekt_2_plq11} \sum \limits _{t=t_0}^{t_{*}(n)-1}
e_{k_t}(Q_{t+1}-Q_t:\hat X_p(\Omega) \rightarrow Y_q(\tilde
U_{t+1}))\underset{\mathfrak{Z}_0}{\lesssim} n^{\frac 1q-\frac
1p}.
\end{align}
\end{Lem}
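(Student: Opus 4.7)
My plan is to identify $Q_{t+1}-Q_t$ as a finite-rank operator with a piecewise-polynomial image, derive its operator norm from the estimate (\ref{pp3_11}), and then choose $\{k_t\}$ so as to balance a Schütt-type rank/structure entropy estimate against the total entropy budget $\sum(k_t-1)\lesssim n$.

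First, I would exploit the definition (\ref{qtf_x}): since $Q_tf|_{\tilde U_{t,i}}=P_{\hat\xi_{t,i}}f\in\mathcal{P}(\tilde U_{t,i})$, the difference $(Q_{t+1}-Q_t)f$ is supported in $\tilde U_{t+1}$ and on each component $\tilde U_{t+1,j}$, $j\in\overline{J}_{t+1}$, it takes values in $\mathcal{P}(\tilde U_{t+1,j})$. Hence the operator has rank at most $r_0|\overline{J}_{t+1}|\le r_0\nu_{t+1}$. From (\ref{pp3_11}) together with $\tilde U_{t+1}\subset\tilde U_t$ and the triangle inequality I obtain
$$\|Q_{t+1}-Q_t:\hat X_p(\Omega)\to Y_q(\tilde U_{t+1})\|\underset{\mathfrak{Z}_0}{\lesssim}2^{-\left(\frac{1}{p}-\frac{1}{q}\right)t}.$$
Moreover, the piecewise estimate derived from (\ref{ftp_ti_mod}) applied both to $\tilde P_{t+1,j}$ and to $\tilde P_{t,i(j)}$ (the latter via a subtree rooted at $\hat\xi_{t,i(j)}$ containing $\tilde{\mathcal A}_{t+1,j}$) gives a local bound $\|(Q_{t+1}-Q_t)f\|_{Y_q(\tilde U_{t+1,j})}\lesssim 2^{-\left(1/p-1/q\right)t}\|f\|_{X_p(\tilde U_{t,i(j)})}$, which endows the operator with the $\ell_p\to\ell_q$ structure needed to invoke Schütt-type entropy estimates.

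Second, I would apply a Schütt/Edmunds--Netrusov-type entropy estimate \cite{c_schutt,edm_netr1,edm_netr2} to this finite-rank operator. For $\nu_{t+1}$-dimensional $\ell_p\to\ell_q$ embeddings this yields three regimes: $e_k\asymp 1$ for $k\le\log\nu_{t+1}$, polynomial decay $e_k\asymp(\log(1+\nu_{t+1}/k)/k)^{1/p-1/q}$ for $\log\nu_{t+1}\le k\le\nu_{t+1}$, and exponential decay $e_k\asymp 2^{-k/\nu_{t+1}}\nu_{t+1}^{1/q-1/p}$ for $k\ge\nu_{t+1}$. Multiplying by the norm factor $2^{-(1/p-1/q)t}$ gives an upper bound on $e_k(Q_{t+1}-Q_t)$.

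Third, I would prescribe $k_t$ using the scales from (\ref{nu_t_k1})--(\ref{2tn_est}): $2^{t_*(n)}\asymp\log n$ and $\log\nu_{t+1}\lesssim 2^{t+1}$. A tailored choice (roughly $k_t$ of order $n\cdot 2^{-\alpha(t_*(n)-t)}$ together with a correction for small $t$ where $\nu_{t+1}$ is tiny) should produce $e_{k_t}\lesssim 2^{-\beta(t_*(n)-t)}n^{1/q-1/p}$, whose geometric sum in $t$ gives (\ref{sum_ekt_2_plq11}); meanwhile $\sum(k_t-1)\lesssim n$ since there are only $\asymp\log\log n$ terms and the dominant one is of size $\asymp n$. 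I would then verify both estimates directly, using (\ref{2tn_est}) to convert between $t$ and $n$-scales and (\ref{nu_t_k1}) to control $\log\nu_{t+1}$.

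The main obstacle is the delicate balance in Step 3. Since $\nu_{t+1}$ may grow doubly exponentially in $t$ while $\|Q_{t+1}-Q_t\|$ decays only as $2^{-(1/p-1/q)t}$, the crude rank-based estimate $e_k\lesssim\|T\|2^{-(k-1)/N}$ is too weak: it would force $k_t$ of order $\nu_{t_*(n)}\log n$, violating $\sum(k_t-1)\lesssim n$. The proof therefore must exploit the polynomial-decay middle regime of Schütt's estimate (where the rank enters only through $\log\nu_{t+1}\asymp 2^{t+1}$ rather than $\nu_{t+1}$ itself), and split the range of $t$ according to which regime of the entropy kernel is active, so that both (\ref{slttkt}) and (\ref{sum_ekt_2_plq11}) can be satisfied simultaneously with the same $\{k_t\}$.
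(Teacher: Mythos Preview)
Your proposal is correct and follows the same route as the paper, which does not give a self-contained proof but refers to Lemma~6 of \cite{vas_entr}. You have identified the essential mechanism: $(Q_{t+1}-Q_t)f\in{\cal S}_{T_t}(\Omega)$ with ${\rm card}\,T_t=|\overline{J}_{t+1}|$ (see (\ref{tt_def})), the $Y_{p,q,T_t}$-norm bound $\|(Q_{t+1}-Q_t)f\|_{p,q,T_t}\underset{\mathfrak{Z}_0}{\lesssim}2^{-(1/p-1/q)t}$ following from (\ref{ftp_ti_mod}) applied on each $\tilde U_{t+1,j}$, and then Sch\"utt's Theorem~\ref{shutt_trm} after the change of coordinates supplied by Lemma~\ref{oper_a}. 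The paper makes this $\ell_p\to\ell_q$ reduction explicit precisely through Lemma~\ref{oper_a}, which you should cite rather than the Edmunds--Netrusov papers; it gives the factorization
\[
\hat X_p(\Omega)\xrightarrow{\;Q_{t+1}-Q_t\;}Y_{p,q,T_t}(\tilde U_{t+1})\xrightarrow{\;A\;}l_p^{\nu}\xrightarrow{\;I_\nu\;}l_q^{\nu}\xrightarrow{\;A^{-1}\;}Y_q(\tilde U_{t+1}),
\]
with $\nu\underset{\mathfrak{Z}_0}{\lesssim}|\overline{J}_{t+1}|\le\nu_{t+1}$, so that (\ref{mult_n}) reduces everything to Theorem~\ref{shutt_trm}. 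Your diagnosis of the ``obstacle'' is exactly right: the crude rank bound fails because $\nu_{t+1}$ is doubly exponential, and one must sit in the middle regime of Sch\"utt where only $\log\nu_{t+1}\underset{\mathfrak{Z}_0}{\lesssim}2^{t+1}$ enters. A concrete choice that works (and mirrors the paper's choices elsewhere, cf.\ (\ref{ktl})) is $k_t=\lceil n\cdot 2^{-\varepsilon(t_*(n)-t)}\rceil$ for small $\varepsilon=\varepsilon(\mathfrak{Z}_0)>0$; then $\sum_t(k_t-1)\underset{\mathfrak{Z}_0}{\lesssim}n$ is immediate, and since $k_t\ge\log\nu_{t+1}$ for all $t<t_*(n)$ (because $2^{t_*(n)}\underset{\mathfrak{Z}_0}{\asymp}\log n$ by (\ref{2tn_est})), Theorem~\ref{shutt_trm} gives a geometrically summable bound in $t$.
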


\begin{Lem}
\label{ptm_sum} There exists a sequence $\{k_{t,m}\}_{t_0\le t<
t_*(n), m\in \Z_+}\subset \N$ such that $\sum \limits
_{t,m}(k_{t,m}-1) \underset{\mathfrak{Z}_0}{\lesssim} n$,
$$
\sum \limits _{t=t_0}^{t_*(n)-1}\sum \limits _{m=0}^\infty
e_{k_{t,m}}(P_{t,m+1}-P_{t,m}:\hat X_p(\Omega) \rightarrow
Y_q(G_{m,t})) \underset{\mathfrak{Z}_0}{\lesssim} n^{\frac
1q-\frac 1p}.
$$
\end{Lem}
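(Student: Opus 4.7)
The plan is to factor each difference $P_{t,m+1}-P_{t,m}$ through a finite-dimensional identity $\ell_p^{N_{t,m}}\hookrightarrow\ell_q^{N_{t,m}}$ with $N_{t,m}\underset{\mathfrak{Z}_0}{\asymp}2^m$, apply Sch\"utt's entropy estimates \cite{c_schutt}, and then allocate the indices $k_{t,m}$ carefully to respect the total budget~$n$. First, fix $t_0\le t<t_*(n)$ and $m\in\Z_+$. On a common refinement of $\tilde T_{t,m}$ and $\tilde T_{t,m+1}$, whose cardinality is $N_{t,m}\underset{\mathfrak{Z}_0}{\asymp}2^m$ by (\ref{cttm}) and (\ref{card_et}), the restriction of $P_{t,m+1}f-P_{t,m}f$ to each cell~$E$ lies in ${\cal P}(E)$. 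Triangle inequality combined with (\ref{fpttm1})--(\ref{fpttm2}) yields the local bound
$$
\|(P_{t,m+1}-P_{t,m})f\|_{Y_q(E)}\underset{\mathfrak{Z}_0}{\lesssim}\alpha_{t,m}\|f\|_{X_p(\tilde E)},\qquad \alpha_{t,m}:=2^{-(1/p-1/q)t}\cdot 2^{-\delta_*\max(0,m-m_t)},
$$
where $\tilde E$ is the union of the boundedly many cells of $\tilde T_{t,m}\cup\tilde T_{t,m+1}$ meeting $E$. Assembling these local inequalities via (\ref{f_xp}), (\ref{f_yq}), the bounded-overlap property, and the inequality $p<q$, the operator factors as $B_{t,m}:\hat X_p(\Omega)\to\ell_p^{N_{t,m}}$ followed by $\alpha_{t,m}\cdot\mathrm{id}:\ell_p^{N_{t,m}}\to\ell_q^{N_{t,m}}$ followed by $A_{t,m}:\ell_q^{N_{t,m}}\to Y_q(G_{m,t})$, with $\|B_{t,m}\|,\|A_{t,m}\|\underset{\mathfrak{Z}_0}{\lesssim}1$.

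Sch\"utt's theorem then produces $e_k(P_{t,m+1}-P_{t,m})\underset{\mathfrak{Z}_0}{\lesssim}\alpha_{t,m}\,\Phi_{p,q}(k,2^m)$, where $\Phi_{p,q}(k,N)\asymp 1$ for $k\le\log N$, $\asymp (k^{-1}\log(1+N/k))^{1/p-1/q}$ for $\log N\le k\le N$, and $\asymp 2^{-k/N}N^{1/q-1/p}$ for $k\ge N$. Next, I would choose $k_{t,m}$ by splitting on the position of $m$ relative to $m_t$. For $m>m_t$, set $k_{t,m}:=\lceil 2^m s_{t,m}\rceil$ with $s_{t,m}$ large enough that the tail factor $2^{-s_{t,m}}$ combined with $2^{-\delta_*(m-m_t)}$ creates summable geometric decay in both $m$ and $t$; since $\delta_*>0$, this is affordable. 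For $m\le m_t$, set $k_{t,m}:=\lceil 2^m w_{t,m}\rceil$, where the weights $w_{t,m}\in(0,1]$ are chosen to equidistribute the contribution $\alpha_{t,m}\Phi_{p,q}(k_{t,m},2^m)$ across $(t,m)$, the total $\sum_{t,m\le m_t}2^m w_{t,m}$ being controlled by $n$.

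The budget inequality $\sum_{t,m}(k_{t,m}-1)\underset{\mathfrak{Z}_0}{\lesssim}n$ then reduces, for the principal $m\le m_t$ part, to $\sum_{t<t_*(n)}2^{m_t}\underset{\mathfrak{Z}_0}{\lesssim}n$; this follows from $2^{m_t}\underset{\mathfrak{Z}_0}{\asymp}\overline{\nu}_t$ (see (\ref{mt_def}), (\ref{nu_t_k1})), the doubly-exponential growth of $\overline{\nu}_t$, and the definition (\ref{t_st_n}) of $t_*(n)$. Plugging $k_{t,m}$ into the entropy estimate, the $m$-sums telescope or form geometric series because of the $2^{-\delta_*(m-m_t)_+}$ factor and the $\ell_p\to\ell_q$ exponent, while the $t$-sum is governed by the sharp bound on $\overline{\nu}_{t_*(n)-1}\underset{\mathfrak{Z}_0}{\lesssim}n$ together with (\ref{2tn_est}); together these yield the desired $\sum_{t,m}e_{k_{t,m}}\underset{\mathfrak{Z}_0}{\lesssim}n^{1/q-1/p}$.

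The main obstacle is precisely this final allocation. Because the lemma belongs to the limiting case, there is no slack between the budget and the target: the logarithmic corrections arising in Sch\"utt's middle regime, in the summations over $m\le m_t$, and in the slow factor $\psi_*(2^{2^t})$ from (\ref{nu_t_k1}) must all cancel rather than accumulate. Verifying that no extraneous $\log n$ is introduced relies critically on the slow-variation hypothesis $y\psi_*'(y)/\psi_*(y)\to 0$ from Assumption \ref{sup3}, which ensures that $\overline{\nu}_t$ is dominated by its last few terms and that $\overline{\nu}_{t_*(n)-1}\underset{\mathfrak{Z}_0}{\asymp}n$ is indeed sharp enough to absorb the accumulated logarithmic factors into the implicit constant.
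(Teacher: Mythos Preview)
Your factorization through $\ell_p^{N_{t,m}}\hookrightarrow\ell_q^{N_{t,m}}$ via the common refinement of $\tilde T_{t,m}$ and $\tilde T_{t,m+1}$, followed by Sch\"utt's bounds, is correct and is exactly the approach the paper has in mind: it simply cites Lemma~7 of \cite{vas_entr}, whose proof carries over verbatim once one observes (as the paper does just after Assumption~\ref{sup3}) that Assumptions~1--3 of \cite{vas_width_raspr,vas_entr} hold here with $\lambda_*=\mu_*=\tfrac1p-\tfrac1q$ and $u_*\equiv 1$.

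Where your proposal goes astray is in the final paragraph. You describe the allocation as living ``in the limiting case'' with ``no slack'', and you worry about logarithmic factors that must cancel exactly. This is a misdiagnosis. Lemma~\ref{ptm_sum} covers only the range $t_0\le t<t_*(n)$, and this range is \emph{not} where the critical behaviour of the paper resides; the genuinely delicate part is the range $t_*(n)\le t<t_{**}(n)$, handled separately in Lemma~\ref{main_lem}. For $t<t_*(n)$ there is ample slack, for two reasons. First, by the very definition (\ref{t_st_n}) one has $\overline{\nu}_{t_*(n)-1}<n$ (your claim $\overline{\nu}_{t_*(n)-1}\underset{\mathfrak{Z}_0}{\asymp}n$ is false, and the truth is in your favour). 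Second, and more importantly, the doubly-exponential growth $\overline{\nu}_{t+1}/\overline{\nu}_t\underset{\mathfrak{Z}_0}{\asymp}2^{\gamma_*2^t}$ means $\sum_{t<t_*(n)}\overline{\nu}_t\underset{\mathfrak{Z}_0}{\lesssim}\overline{\nu}_{t_*(n)-1}$, and in fact one can afford to multiply each $\overline{\nu}_t$ by a factor polynomial in $2^t\underset{\mathfrak{Z}_0}{\lesssim}\log n$ while keeping the total budget $\underset{\mathfrak{Z}_0}{\lesssim}n$. Concretely, an allocation of the type $k_{t,m}=\lceil n\cdot 2^{-\varepsilon(t_*(n)-t+|m-m_t|)}\rceil$ for small $\varepsilon=\varepsilon(\mathfrak{Z}_0)>0$ already places every pair $(t,m)$ with $m\le m_t$ into Sch\"utt's third regime $k\ge N$, since $2^{m_t}\underset{\mathfrak{Z}_0}{\asymp}\nu_t\le c_3\overline{\nu}_t<c_3 n$; the resulting geometric decay in both $t_*(n)-t$ and $|m-m_t|$ then sums without any logarithmic residue. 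The slow-variation of $\psi_*$ is used only to guarantee (\ref{2tn_est}), not to effect any cancellation.

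In short: your mechanism is right, but you should drop the hedging about criticality. The allocation here is the routine one from \cite{vas_entr}; the new work in the present paper is elsewhere.
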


Lemmas \ref{sum_qt_est} and \ref{ptm_sum} are proved similarly as
Lemmas 6 and 7 in \cite{vas_entr}.

\begin{Lem}
\label{main_lem} We have
$$
e_n((\tilde Q-Q_{t_*(n)})\chi _{\tilde U_{t_*(n)}}:\hat
X_p(\Omega) \rightarrow
Y_q(\Omega))\underset{\mathfrak{Z}_0}{\lesssim} n^{\frac 1q-\frac
1p}.
$$
\end{Lem}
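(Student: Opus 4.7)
The plan is to decompose the operator $R := (\tilde Q - Q_{t_*(n)}) \chi_{\tilde U_{t_*(n)}}$ according to the horizontal slices $G_t$, $t \ge t_*(n)$, and to estimate the entropy numbers of each slice by reducing to Sch\"utt's estimates for finite-dimensional embeddings of $l_p$ into $l_q$. Set $R_t f := R f \cdot \chi_{G_t}$, so that $R = \sum_{t \ge t_*(n)} R_t$. Using (\ref{til_q_def}), (\ref{qtf_x}), (\ref{tmtt}) and (\ref{ptmt_def}), for each $\xi \in {\bf V}(\Gamma_t)$ one has
$$
R_t f|_{\hat F(\xi)} = P_\xi f - P_{\hat\xi_{t_*(n), i(\xi)}} f \in {\cal P}(\hat F(\xi)),
$$
where $i(\xi) \in \hat J_{t_*(n)}$ is the index of the subtree ${\cal A}_{t_*(n), i}$ containing $\xi$. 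Hence the range of $R_t$ is a subspace of $Y_q(G_t)$ of dimension at most $r_0 \nu_t$.

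The first step is a uniform slice norm bound. Splitting $R_t f|_{\hat F(\xi)} = (P_\xi f - f) - (P_{\hat\xi_{t_*(n), i(\xi)}} f - f)$, the first group of summands is controlled by (\ref{fpttm1}) with $m = m_t$ together with (\ref{tmtt})--(\ref{ptmt_def}), and the second group, regrouped by index $i \in \hat J_{t_*(n)}$, is controlled by (\ref{pp3_11}) applied subtree-wise. Summing $q$-th powers over $\xi \in {\bf V}(\Gamma_t)$ and using the elementary inequality $\sum_i a_i^q \le \bigl(\sum_i a_i^p\bigr)^{q/p}$ for $q \ge p$ to aggregate $X_p$-seminorms across subtrees, one obtains
$$
\|R_t f\|_{Y_q(G_t)} \underset{\mathfrak{Z}_0}{\lesssim} 2^{(1/q - 1/p) t_*(n)} \|f\|_{\hat X_p(\Omega)}, \qquad t \ge t_*(n).
$$

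The second step is to turn each slice bound into an entropy estimate and to add them up. The piecewise polynomial structure together with (\ref{f_yq}) factors $R_t$ through an $l_p$-sum of the $\nu_t$ fibers ${\cal P}(\hat F(\xi))$ and the canonical embedding of $l_p$ into $l_q$ on $\nu_t$ coordinates, the factor $r_0$ being absorbed into constants. The vector-valued form of Sch\"utt's inequality (see \cite{c_schutt, edm_netr1, edm_netr2}) then yields
$$
e_{k_t}(R_t) \underset{\mathfrak{Z}_0}{\lesssim} 2^{(1/q - 1/p) t_*(n)} \cdot \omega_{p, q}(k_t, \nu_t),
$$
with $\omega_{p, q}(k, N) \asymp 1$ for $k \le \log N$, $\omega_{p, q}(k, N) \asymp (k^{-1} \log(1 + N/k))^{1/p - 1/q}$ for $\log N \le k \le N$, and $\omega_{p, q}(k, N) \asymp 2^{-k/N} N^{-(1/p - 1/q)}$ for $k \ge N$. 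One then allocates $k_t$ so that $\sum_t (k_t - 1) \lesssim n$: for $t_*(n) \le t \le t_{**}(n)$ choose $k_t$ in the middle Sch\"utt regime so that $\omega_{p, q}(k_t, \nu_t) \cdot 2^{(1/q - 1/p) t_*(n)} \lesssim 2^{(1/q - 1/p) t_{**}(n)} \cdot 2^{-\alpha(t_{**}(n) - t)}$ for a fixed $\alpha > 0$; for $t > t_{**}(n)$ choose $k_t$ in the large-$k$ regime, so the factor $2^{-k_t/\nu_t}$ renders each term negligible. Relation (\ref{2tn_est}) then delivers the target rate $n^{1/q - 1/p}$.

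The main obstacle will be the middle range $t_*(n) \le t \le t_{**}(n)$, where $\nu_t$ climbs from $\asymp \log n$ up to $\asymp 2^n$: one must choose $k_t$ level by level so that the middle Sch\"utt factor improves the uniform slice bound $2^{(1/q - 1/p) t_*(n)}$ to a geometric decay around the target $n^{1/q - 1/p}$, while the cumulative budget stays $O(n)$. A further subtlety is that the piecewise $l_p$-control underlying the factorization in the second step is aggregated over the subtrees ${\cal A}_{t_*(n), i}$ rather than over the individual pieces $\hat F(\xi)$, so the branching bound (\ref{c_v1_a}) and the growth bound (\ref{nu_t_k1}) must be invoked to prevent spurious logarithmic losses in the final summation.
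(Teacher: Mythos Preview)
Your plan has a genuine gap: the uniform slice bound $\|R_t\|\lesssim 2^{(1/q-1/p)t_*(n)}\asymp(\log n)^{1/q-1/p}$ is simply too weak to be combined with Sch\"utt's estimate within an entropy budget of order $n$.

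Look at a level $t$ with $t_*(n)\le t\le t_{**}(n)$. By (\ref{nu_t_k1}) and (\ref{2tn_est}) one has $\log\nu_t\asymp 2^t$, which ranges from $\asymp\log n$ up to $\asymp n$. In the middle Sch\"utt regime you want
\[
\Bigl(\tfrac{\log(1+\nu_t/k_t)}{k_t}\Bigr)^{1/p-1/q}\cdot(\log n)^{1/q-1/p}\ \lesssim\ n^{1/q-1/p}\cdot 2^{-\alpha(t_{**}(n)-t)},
\]
hence $\log(1+\nu_t/k_t)/k_t\lesssim (\log n)/n$ at the top levels. Since there $\log\nu_t\asymp n$, this forces $k_t\gtrsim n^2/\log n$; summed over $t_{**}(n)-t_*(n)\asymp\log n$ levels, the total budget is $\gtrsim n^2$, not $n$. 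For $t>t_{**}(n)$ the situation is worse: the ``large-$k$'' regime you invoke requires $k_t\ge\nu_t\ge 2^n$, while the norm of the tail $\sum_{t\ge t_{**}(n)}R_t$ is still only $\lesssim(\log n)^{1/q-1/p}$ (the part $f-Q_{t_*(n)}f$ does not decay beyond level $t_*(n)$), so setting $k_t=1$ there leaves a contribution of the wrong order.

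There is a second, related gap in the factorization step. To feed $R_t$ into Sch\"utt on $\nu_t$ coordinates you need the $\|\cdot\|_{p,q,\tilde T_{t,m_t}}$-bound
\[
\Bigl(\sum_{\xi\in{\bf V}(\Gamma_t)}\|R_tf|_{\hat F(\xi)}\|_{Y_q(\hat F(\xi))}^p\Bigr)^{1/p}\ \lesssim\ 2^{(1/q-1/p)t_*(n)}.
\]
The term $f-P_\xi f$ localizes and obeys this, but the term $f-P_{\hat\xi_{t_*(n),i(\xi)}}f$ does not: (\ref{f_pom_f_cor}) gives a bound only on whole subtrees rooted at $\hat\xi_{t_*(n),i}$, and when you sum $p$-th powers over the $\nu_t$ leaves those subtrees (paths) overlap massively. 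Aggregating instead over the $|\hat J_{t_*(n)}|$ coarse blocks cures the $l_p$-bound, but then the image on each block is still $(\nu_t/|\hat J_{t_*(n)}|)$-dimensional and you do not get the $l_p\hookrightarrow l_q$ factor you claim.

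The paper avoids both obstacles by a completely different mechanism: for each $f$ it builds a \emph{function-adapted} partition ${\bf T}_f$ of $\tilde\Gamma_{t_*(n)}$ into subtrees (via Lemma \ref{lemma_o_razb_dereva1}), shows using the strong inequality (\ref{f_pom_f}) (not merely (\ref{f_pom_f_cor})) that the piecewise projection $P_{{\bf T}_f}$ already approximates $(\tilde Q-Q_{t_*(n)})f$ to accuracy $n^{1/q-1/p}$, proves the family of such partitions has $\log|{\cal N}|\lesssim n$, and then invokes Lifshits' Theorem \ref{lifs_sta}. Only afterwards are the finite-rank operators $P_{\bf T}$ themselves handled by Sch\"utt-type arguments. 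The key point you are missing is that (\ref{f_pom_f}) is what allows the approximation error to reach $n^{1/q-1/p}$; a slicewise decomposition using only the level-$t_*(n)$ bound cannot.
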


Notice that for $t_{**}(n)\le t_0$ this estimate follows from
(\ref{fqf}). Hence, throughout we assume that $t_{**}(n)>t_0$.

\smallskip

We need some auxiliary assertions.

For any $\nu \in \N$ we denote by $I_\nu$ the identity operator on
$\R^\nu$.

\begin{trma}
\label{shutt_trm} {\rm \cite{c_schutt, edm_trieb_book}.} Let $1\le
p\le q\le \infty$. Then
$$
e_k(I_\nu:l_p^{\nu}\rightarrow l_q^{\nu}) \underset{p,q}{\asymp}
\left\{ \begin{array}{l} 1, \quad 1\le k\le
\log \nu, \\
\left(\frac{\log\left(1+\frac{\nu}{k}\right)}{k}\right)^{\frac1p
-\frac 1q}, \quad \log \nu\le k\le \nu, \\
2^{-\frac{k}{\nu}}\nu^{\frac 1q-\frac 1p}, \quad \nu\le
k.\end{array}\right.
$$
\end{trma}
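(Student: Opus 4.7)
\smallskip

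\emph{Plan.} The strategy is to decompose the operator $A := (\tilde Q - Q_{t_*(n)})\chi_{\tilde U_{t_*(n)}}$ along the levels $t \ge t_*(n)$ of the tree ${\cal A}$, estimate each per-level piece by Schütt's theorem (Theorem~\ref{shutt_trm}) applied to an auxiliary $l_p^m \to l_q^m$ identity, and sum with a judicious choice of entropy indices $\{k_t\}$, in the spirit of the proofs of Lemmas~\ref{sum_qt_est} and~\ref{ptm_sum}.

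\emph{Step 1: Level decomposition.} For each $t \ge t_*(n)$ set $A_t f := (\tilde Q f - Q_{t_*(n)} f)\chi_{G_t}$. By (\ref{til_q_def}), (\ref{tmtt}), (\ref{ptmt_def}) and (\ref{qtf_x}), on each cell $\hat F(\xi)$ with $\xi \in {\bf V}(\Gamma_t)$ we have
$$
A_t f|_{\hat F(\xi)} = P_\xi f - P_{\hat \xi_{t_*(n),\, i(\xi)}} f,
$$
where $i(\xi) \in \hat J_{t_*(n)}$ is the component index with $\hat F(\xi) \subset \tilde U_{t_*(n),\, i(\xi)}$. Using the projection identity $P_\xi \circ P_{\hat \xi_{t_*(n),\, i(\xi)}} = P_{\hat \xi_{t_*(n),\, i(\xi)}}$ (both map onto ${\cal P}(\Omega)$), this equals $P_\xi\bigl(f - P_{\hat \xi_{t_*(n),\, i(\xi)}} f\bigr)$. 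In particular, the rank of $A_t$ is at most $r_0 \nu_t$.

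\emph{Step 2: Per-level Schütt bound.} For each $t \in [t_*(n),\, t_{**}(n))$ I would factor $A_t$ as
$$
\hat X_p(\Omega) \xrightarrow{\;S_t\;} l_p^{r_0 \nu_t} \xrightarrow{\;I\;} l_q^{r_0 \nu_t} \xrightarrow{\;R_t\;} Y_q(G_t),
$$
where $S_t$ records the contributions $\|f\|_{X_p(\Omega_{{\cal A}_{t,j}})}$ (using the additivity (\ref{f_xp})) and $R_t$ reconstructs the polynomial pieces. The operator norms of $S_t$ and $R_t$ carry the prefactor $2^{-(1/p-1/q)t/2}$ each, provided by (\ref{f_pom_f}) of Assumption~\ref{sup1} on the assembly side and by (\ref{f_pom_f_cor}) applied cellwise (with $\xi_* = \xi$ and ${\cal D} = \{\xi\}$) on the reconstruction side. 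Theorem~\ref{shutt_trm} then yields
$$
e_{k_t}(A_t) \underset{\mathfrak{Z}_0}{\lesssim} 2^{-(1/p-1/q)t}\, e_{k_t}\bigl(I_{r_0\nu_t}: l_p^{r_0\nu_t} \to l_q^{r_0\nu_t}\bigr).
$$

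\emph{Step 3: Choice of $\{k_t\}$ and summation.} I choose $\{k_t\}_{t_*(n)\le t < t_{**}(n)}$ exactly as in the proofs of Lemmas~\ref{sum_qt_est}, \ref{ptm_sum}: balancing the three regimes of Theorem~\ref{shutt_trm} against the prefactor $2^{-(1/p-1/q)t}$ and the doubly exponential growth $\nu_t \lesssim 2^{\gamma_* 2^t}\psi_*(2^{2^t})$ from Assumption~\ref{sup3}. The constraints $\sum_t (k_t - 1) \lesssim n$ and $\sum_t e_{k_t}(A_t) \lesssim n^{1/q - 1/p}$ are satisfied using $2^{t_*(n)} \asymp \log n$ and $2^{t_{**}(n)} \asymp n$ from (\ref{2tn_est}).

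\emph{Step 4: Tail $t \ge t_{**}(n)$.} Write $A \chi_{\tilde U_{t_{**}(n)}} = -(f - \tilde Q f)\chi_{\tilde U_{t_{**}(n)}} + (f - Q_{t_*(n)} f)\chi_{\tilde U_{t_{**}(n)}}$. The first summand has $Y_q$-norm $\lesssim n^{1/q-1/p}$ by (\ref{fqf}). For the second, decompose $\tilde U_{t_{**}(n)} \cap \tilde U_{t_*(n),\, i}$ into the components $\tilde U_{t_{**}(n),\, j}$ and use the triangle inequality
$$
\|f - P_{\hat \xi_{t_*(n),\, i}} f\|_{Y_q(\tilde U_{t_{**}(n),\, j})} \le \|f - P_{\hat \xi_{t_{**}(n),\, j}} f\|_{Y_q(\tilde U_{t_{**}(n),\, j})} + \|P_{\hat \xi_{t_{**}(n),\, j}}(f - P_{\hat \xi_{t_*(n),\, i}} f)\|_{Y_q(\tilde U_{t_{**}(n),\, j})}.
$$
Both terms are $\lesssim n^{1/q-1/p}\,\|f\|_{X_p(\tilde U_{t_{**}(n),\, j})}$: the first by (\ref{f_pom_f}) applied at $\xi_* = \hat \xi_{t_{**}(n),\, j}$ (where the weighted $l_p$ sum starts at $t = t_{**}(n)$ so that $2^{(1/q-1/p) t_{**}(n)} \asymp n^{1/q-1/p}$), and the second by (\ref{f_pom_f_cor}) applied to $f' := f - P_{\hat \xi_{t_*(n),\, i}} f$ (noting that $\|f'\|_{X_p} = \|f\|_{X_p}$ since ${\cal P}$ is annihilated by the $X_p$-seminorm). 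Summing in $\ell_q$ over $(i,j)$ and using $q \ge p$ yields $\|A\chi_{\tilde U_{t_{**}(n)}}\| \lesssim n^{1/q-1/p}$, which absorbs into $e_1$.

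\emph{Main obstacle.} The genuinely delicate point is Step~4: the naive norm bound $\|(f - Q_{t_*(n)}f)\chi_{\tilde U_{t_{**}(n)}}\|_{Y_q} \lesssim 2^{(1/q-1/p)t_*(n)} \asymp (\log n)^{1/q-1/p}$ coming from (\ref{pp3_11}) is too weak, and the improvement to $n^{1/q-1/p}$ requires exploiting the fact that the relevant subtrees have roots at the deeper level $t_{**}(n)$ rather than $t_*(n)$. The secondary obstacle is the precise factorization in Step~2 and the balancing of regimes in Step~3, but these follow the same pattern as in Lemmas~\ref{sum_qt_est}, \ref{ptm_sum}.
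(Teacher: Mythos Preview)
Your proposal is not a proof of the stated theorem at all. Theorem~\ref{shutt_trm} is Sch\"utt's classical estimate for $e_k(I_\nu:l_p^\nu\to l_q^\nu)$, quoted from \cite{c_schutt,edm_trieb_book}; the paper does not prove it and neither do you. What you have written is an attempted proof of Lemma~\ref{main_lem}. I will review it as such.

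Your Step~2 factorization is the genuine gap. You assert that $A_t$ factors through $l_p^{r_0\nu_t}\to l_q^{r_0\nu_t}$ with total prefactor $2^{-(1/p-1/q)t}$, but this is not supported by the estimates available. On each cell $\hat F(\xi)$, $\xi\in{\bf V}(\Gamma_t)$, the piece $A_tf|_{\hat F(\xi)}=P_\xi f-P_{\hat\xi_{t_*(n),i(\xi)}}f$ contains the \emph{coarse} projection $P_{\hat\xi_{t_*(n),i(\xi)}}f$, which depends on $f$ globally on $\tilde U_{t_*(n),i(\xi)}$ and carries only the decay $2^{-(1/p-1/q)t_*(n)}\asymp(\log n)^{1/q-1/p}$, not $2^{-(1/p-1/q)t}$. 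To pass through $l_p^{r_0\nu_t}$ via Lemma~\ref{oper_a} you would need to bound $\|A_tf\|_{p,q,T}$, i.e.\ the $l_p$-sum of $\|f-P_{\hat\xi_{t_*(n),i(\xi)}}f\|_{Y_q(\hat F(\xi))}$ over $\nu_t$ cells; Assumption~\ref{sup1} only controls $l_q$-sums of these quantities (see (\ref{f_pom_f})), and since $p<q$ the $l_p$-sum is larger and is not controlled by $\|f\|_{X_p(\Omega)}$ with any useful constant. With only the prefactor $(\log n)^{1/q-1/p}$ available, the Sch\"utt summation in Step~3 cannot reach $n^{1/q-1/p}$ under the budget $\sum_t(k_t-1)\lesssim n$, because $\nu_t$ grows doubly exponentially while the gain per level stays fixed.

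This is exactly why the paper's proof of Lemma~\ref{main_lem} does \emph{not} proceed level by level. Instead it builds, for each $f\in B\hat X_p(\Omega)$, an adaptive partition ${\bf T}_f$ of $\tilde\Gamma_{t_*'(n)}$ (Steps~1--3) tailored to where $\|f\|_{X_p}$ concentrates, so that the piecewise-polynomial approximant $P_{{\bf T}_f}f$ achieves the error $n^{1/q-1/p}$ (see (\ref{fpf_appr})). The family of admissible partitions has $\log|{\cal N}|\lesssim n$ (Step~4), which lets Theorem~\ref{lifs_sta} (Lifshits) convert the $f$-dependent approximation into an entropy bound. The remaining entropy of each fixed $P_{{\bf T}_f}$ is then handled (Step~6) by a telescoping over coarser partitions ${\bf T}_{f,t,l}$ together with Sch\"utt. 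Your outline lacks both the adaptive construction and the Lifshits step; without them the argument does not close.
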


The following properties of entropy numbers are well-known (see,
e.g., \cite{edm_trieb_book, piet_op}):
\begin{enumerate}
\item if $T:X\rightarrow Y$, $S:Y\rightarrow Z$ are linear
continuous operators, then $e_{k+l-1}(ST)\le e_k(S)e_l(T)$;
\item if $T, \, S:X\rightarrow Y$ are linear continuous operators,
then
\begin{align}
\label{eklspt} e_{k+l-1}(S+T)\le e_k(S)+e_l(T).
\end{align}
\end{enumerate}
In particular, from the first property it follows that
\begin{align}
\label{mult_n} e_k(ST)\le \|S\|e_k(T), \quad e_k(ST)\le
\|T\|e_k(S).
\end{align}

\begin{trma}
\label{lifs_sta} {\rm \cite{lifs_m}.} Let $X$, $Y$ be normed
spaces, and let $V\in L(X, \, Y)$, $\{V_\nu\}_{\nu\in {\cal
N}}\subset L(X, \, Y)$. Then for any $n\in \N$
$$
e_{n+[\log_2|{\cal N}|]+1}(V)\le \sup _{\nu\in {\cal N}}
e_n(V_\nu)+\sup _{x\in B_X} \inf _{\nu\in {\cal N}} \|Vx-V_\nu
x\|_Y.
$$
\end{trma}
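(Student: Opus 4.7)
I decompose $V := (\tilde Q - Q_{t_*(n)})\chi_{\tilde U_{t_*(n)}}$ level-by-level as $V = \sum_{t \ge t_*(n)} R_t$ with $R_t f := (Vf)\chi_{G_t}$. By (\ref{ptmt_def}), (\ref{qtf_x}) and (\ref{ovrl_it_eq_hat_it}), the image of $R_t$ is a direct sum of cell-wise polynomial spaces: on $\hat F(\xi)$ for $\xi \in {\bf V}(\Gamma_t)$,
$$
(R_tf)|_{\hat F(\xi)} = P_\xi f - P_{\hat\xi_{t_*(n),i(\xi)}} f \in {\cal P}(\hat F(\xi)),
$$
where $i(\xi)$ indexes the component of $\tilde\Gamma_{t_*(n)}$ containing $\xi$; in particular ${\rm rank}\, R_t \le r_0\nu_t$. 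The goal is to distribute an entropy budget of total size $\lesssim n$ among the $R_t$ and iterate (\ref{eklspt}).

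\textbf{Per-level analysis.} Using the $\ell_q$-sum property (\ref{f_yq}), the triangle inequality $\|(P_\xi - P_{\hat\xi_{t_*(n),i(\xi)}})f\|_{Y_q(\hat F(\xi))} \le \|P_\xi f - f\|_{Y_q(\hat F(\xi))} + \|f - P_{\hat\xi_{t_*(n),i(\xi)}} f\|_{Y_q(\hat F(\xi))}$, the bound (\ref{fpttm1})--(\ref{tmtt}) on the first summand, and (\ref{f_pom_f_cor}) together with $p<q$ (so $l_p \hookrightarrow l_q$) on the second, I would obtain the uniform operator estimate
$$
\|R_t:\hat X_p(\Omega)\to Y_q(G_t)\| \lesssim 2^{(1/q-1/p)t_*(n)} \asymp (\log n)^{1/q-1/p}
$$
(using (\ref{2tn_est})). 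To promote this into a useful entropy estimate, I factor $R_t$ through the identity $I_{\nu_t}: l_p^{\nu_t}\to l_q^{\nu_t}$ via the map $f\mapsto (\|f\|_{X_p(\hat F(\xi))})_{\xi\in{\bf V}(\Gamma_t)}$ on the source side and the natural isometry of the image into an $l_q$-block on the range side, then apply Theorem~\ref{shutt_trm} to obtain the three-regime bound on $e_k(R_t)$.

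\textbf{Budget allocation and the main obstacle.} For $t$ in the window $t_*(n) \le t \le t_{**}(n)$, of width $\asymp \log n$, I would choose $k_t$ geometric in $t - t_*(n)$ so that $\sum_t (k_t - 1) \lesssim n$. Combining Schütt's three regimes with the bound $\nu_t \le \overline\nu_t$ from (\ref{nu_t_k1}) and the asymptotics $2^{t_*(n)}\asymp \log n$, $2^{t_{**}(n)}\asymp n$, the partial sum $\sum_{t \le t_{**}(n)} e_{k_t}(R_t)$ telescopes to $\lesssim n^{1/q-1/p}$. The main obstacle will be the tail $t > t_{**}(n)$: there $\nu_t$ is doubly-exponential in $t$ and the uniform operator-norm bound $(\log n)^{1/q-1/p}$ cannot be summed cell-by-cell. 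I expect to handle the tail via Theorem~\ref{lifs_sta}, approximating $V$ on $\tilde U_{t_{**}(n)}$ by a family $\{V_\nu\}_{\nu\in {\cal N}}$ of operators built from the finer partitions $T_{m,n}(G)$ and local projections $P_E$ of Assumption~\ref{sup2}, with $\log|{\cal N}|\lesssim n$, so that both $\sup_\nu e_n(V_\nu)$ and $\sup_{f \in B\hat X_p(\Omega)} \inf_\nu \|Vf - V_\nu f\|_{Y_q}$ are $\lesssim n^{1/q-1/p}$; the approximation error will come from (\ref{fpef}) evaluated at scale $m = m_t + O(1)$ and at cell count commensurate with $2^{t_{**}(n)}$.
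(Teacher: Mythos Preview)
Your proposal does not address the stated theorem at all. Theorem~\ref{lifs_sta} is an abstract statement about entropy numbers valid for arbitrary normed spaces $X$, $Y$ and an arbitrary family of operators $\{V_\nu\}_{\nu\in\mathcal N}$; it has nothing to do with the particular operators $\tilde Q$, $Q_{t_*(n)}$, the sets $G_t$, or any of the paper's function-space machinery. You have instead sketched an argument for Lemma~\ref{main_lem}, namely the bound
\[
e_n\bigl((\tilde Q-Q_{t_*(n)})\chi_{\tilde U_{t_*(n)}}:\hat X_p(\Omega)\to Y_q(\Omega)\bigr)\underset{\mathfrak Z_0}{\lesssim} n^{1/q-1/p}.
\]
That you invoke Theorem~\ref{lifs_sta} as a \emph{tool} in your own argument (``I expect to handle the tail via Theorem~\ref{lifs_sta}'') confirms the misidentification.

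The paper does not prove Theorem~\ref{lifs_sta}; it is quoted from \cite{lifs_m}. Its proof is a direct covering argument of a few lines: for each $x\in B_X$ choose $\nu(x)\in\mathcal N$ so that $\|Vx-V_{\nu(x)}x\|_Y$ is within $\varepsilon$ of the infimum; for each $\nu$ cover $V_\nu(B_X)$ by $2^{n-1}$ balls of radius $e_n(V_\nu)+\varepsilon$; then $V(B_X)$ is contained in the union of all these balls enlarged by the approximation error, giving at most $|\mathcal N|\cdot 2^{n-1}\le 2^{(n+[\log_2|\mathcal N|]+1)-1}$ balls of radius $\sup_\nu e_n(V_\nu)+\sup_{x\in B_X}\inf_\nu\|Vx-V_\nu x\|_Y+2\varepsilon$. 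None of your level decomposition, Sch\"utt-type factorisation, or budget allocation is relevant to this.
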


\begin{Lem}
\label{lemma_o_razb_dereva1} {\rm \cite{vas_john}.} Let $({\cal
T}, \, \xi_*)$ be a tree with finite vertex set, let
\begin{align}
\label{cardvvvk} {\rm card}\, {\bf V}_1(\xi)\le k \quad\text{for
any vertex } \; \xi\in {\bf V}({\cal T}),
\end{align}
and let the mapping $\Phi :2^{{\bf V}(\cal T)}\rightarrow \R_+$
satisfy the following conditions:
\begin{align}
\label{prop_psi} \Phi(V_1\cup V_2)\ge \Phi(V_1)+\Phi(V_2), \; V_1,
\, V_2\subset {\bf V}({\cal T}), \;\; V_1\cap V_2=\varnothing,
\end{align}
$\Phi({\bf V}({\cal T}))>0$. Then there is a number $C(k)>0$ such
that for any $n\in \N$ there exists a partition $\mathfrak{S}_n$
of the tree ${\cal T}$ into at most $C(k)n$ subtrees ${\cal T}_j$,
which satisfies the following conditions:
\begin{enumerate}
\item $\Phi({\bf V}({\cal T}_j))\le \frac{(k+2)\Phi({\bf V}({\cal T}))}{n}$
for any $j$ such that ${\rm card}\, {\bf V}({\cal T}_j)\ge 2$;
\item if $m\le 2n$, then each element of
$\mathfrak{S}_n$ intersects with at most $C(k)$ elements of
$\mathfrak{S}_m$.
\end{enumerate}
\end{Lem}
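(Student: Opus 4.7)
The plan is to estimate $e_n(V)$ for $V := (\tilde Q - Q_{t_*(n)})\chi_{\tilde U_{t_*(n)}}$ by decomposing it into a tail with controllable operator norm and a finite-rank remainder, then applying Theorem \ref{lifs_sta} (Lifshits) combined with Theorem \ref{shutt_trm} (Sch\"utt) and the tree-partition Lemma \ref{lemma_o_razb_dereva1}.

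First, from (\ref{tmtt})--(\ref{ptmt_def}), for each $t\ge t_*(n)$ and each $\xi\in {\bf V}(\Gamma_t)$ with $\xi\in\tilde{\cal A}_{t_*(n),i}$, the restriction $Vf|_{\hat F(\xi)}$ equals $(P_\xi-P_{\hat\xi_{t_*(n),i}})f \in {\cal P}(\hat F(\xi))$, so $V$ maps into the piecewise-polynomial space on the partition $\{\hat F(\xi):\xi\in {\bf V}(\tilde\Gamma_{t_*(n)})\}$. I split
\[
V = V_{\mathrm{fin}} + V_{\mathrm{tail}}, \quad V_{\mathrm{tail}}:=(\tilde Q-Q_{t_{**}(n)})\chi_{\tilde U_{t_{**}(n)}},
\]
so that $V_{\mathrm{fin}}$ collects the scales $t_*(n)\le t<t_{**}(n)$ together with the coarser piece $(Q_{t_{**}(n)}-Q_{t_*(n)})\chi_{\tilde U_{t_{**}(n)}}$. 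For $V_{\mathrm{tail}}$, combining (\ref{fqf}) with (\ref{pp3_11}) applied at scale $t_{**}(n)$ and using (\ref{2tn_est}) yields $\|V_{\mathrm{tail}}\|\lesssim n^{1/q-1/p}$, hence $e_1(V_{\mathrm{tail}})\lesssim n^{1/q-1/p}$. The remaining operator $V_{\mathrm{fin}}$ is of finite rank at most $r_0\sum_{t_*(n)\le t\le t_{**}(n)}\nu_t \lesssim r_0\nu_{t_{**}(n)}\lesssim 2^n$, by (\ref{nu_t_k1}) and the doubly-exponential growth of $\overline\nu_t$.

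For $V_{\mathrm{fin}}$ I would apply Theorem \ref{lifs_sta} with approximators $\{V_\nu\}_{\nu\in{\cal N}}$ constructed as follows. For each scale $t\in[t_*(n),t_{**}(n)]$ and each subtree ${\cal A}_{t,i}$, apply Lemma \ref{lemma_o_razb_dereva1} with $k=c_1$ (using (\ref{c_v1_a})) and weight $\Phi_t({\bf W})=\sum_{\xi\in{\bf W}}\|f\|^p_{X_p(\hat F(\xi))}$ to partition ${\cal A}_{t,i}$ into subtrees with a prescribed budget $n_t\asymp 2^t$; since $\sum_{t\le t_{**}(n)}2^t\asymp 2^{t_{**}(n)}\asymp n$ by (\ref{2tn_est}), the total number of subtrees across all scales is $\lesssim n$. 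Each $V_\nu$ coarsens $V_{\mathrm{fin}}$ on every subtree of the resulting partitions by replacing the local piecewise polynomial by the single polynomial determined by the projection at the subtree's root. Bounding the approximation error via (\ref{f_pom_f_cor}) applied on each subtree yields $\sup_f\inf_\nu\|V_{\mathrm{fin}}f-V_\nu f\|_{Y_q}\lesssim n^{1/q-1/p}$; the combinatorial types of the partitions can be discretized so that $|{\cal N}|\le 2^n$. For each fixed $V_\nu$, the effective rank is $\lesssim n$, and Theorem \ref{shutt_trm} applied scale-by-scale together with the norm bounds on $V_{\mathrm{fin}}|_{G_t}$ gives $e_n(V_\nu)\lesssim n^{1/q-1/p}$. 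Theorem \ref{lifs_sta} then produces $e_{2n+2}(V_{\mathrm{fin}})\lesssim n^{1/q-1/p}$, which suffices after a constant rescaling of $n$.

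The main obstacle will be the joint construction of the family ${\cal N}$: the partitions from Lemma \ref{lemma_o_razb_dereva1} depend on $f$, so their combinatorial types must be discretized to obtain a uniform family of size at most $2^n$ while simultaneously preserving the approximation error (via local projection bounds from (\ref{f_pom_f_cor})) and the rank budget for the individual $V_\nu$ (to feed into Sch\"utt). Balancing the scale budgets $n_t$ against the doubly-exponentially growing $\nu_t$ and summing the contributions across $t$ is the core technical step; it closely parallels the budget allocation used for Lemmas \ref{sum_qt_est} and \ref{ptm_sum}.
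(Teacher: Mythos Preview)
Your proposal addresses the wrong statement. The lemma in question, Lemma~\ref{lemma_o_razb_dereva1}, is a purely combinatorial result about partitioning a finite rooted tree into at most $C(k)n$ subtrees with controlled $\Phi$-mass and bounded overlap between the partitions $\mathfrak{S}_n$ and $\mathfrak{S}_m$ for $m\le 2n$; it is quoted from \cite{vas_john} and is not proved in the present paper at all. What you have sketched is instead a proof of Lemma~\ref{main_lem}, the entropy-number estimate $e_n((\tilde Q-Q_{t_*(n)})\chi_{\tilde U_{t_*(n)}})\lesssim n^{1/q-1/p}$, which \emph{uses} Lemma~\ref{lemma_o_razb_dereva1} as a tool. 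Nothing in your write-up touches the actual claim: the existence of the partitions $\mathfrak{S}_n$ with properties~1 and~2.

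If your intent was really Lemma~\ref{main_lem}, then the outline is in the right spirit and broadly parallels the paper's Steps~1--6, but two points would not go through as written. First, you apply the tree-partition lemma to each ${\cal A}_{t,i}$ with a uniform budget $n_t\asymp 2^t$; the paper instead applies it to the whole truncated tree $\hat{\cal A}_t$ with an \emph{adaptive} budget $n_t=\lceil n\cdot 2^{-t}\varepsilon_t\rceil$, where $\varepsilon_t=\sum_{\xi\in{\bf V}(\Gamma_t)}\|f\|^p_{X_p(\hat F(\xi))}$. This $f$-dependent allocation is exactly what allows the approximation error to be controlled through the refined inequality (\ref{f_pom_f}) of Assumption~\ref{sup1}; a uniform allocation together with (\ref{f_pom_f_cor}) alone does not give $n^{1/q-1/p}$. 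Second, the claim that each $V_\nu$ has rank $\lesssim n$ and hence $e_n(V_\nu)\lesssim n^{1/q-1/p}$ directly from Sch\"utt is incomplete: the paper needs a further telescoping $P_{(f,t)}=\sum_l(P_{\tilde{\bf T}^n_{f,t,l}}-P_{\tilde{\bf T}^n_{f,t,l+1}})$ across the hierarchy of partitions produced by Lemma~\ref{lemma_o_razb_dereva1} (its property~2 is used precisely here to bound the block overlaps), followed by a separate allocation of entropy budgets $k_{t,l}$ at each level $l$.
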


\begin{Lem}
\label{oper_a} {\rm \cite{vas_width_raspr}.} Let $T$ be a finite
partition of a measurable subset $G\subset \Omega$, $\nu=\dim
{\cal S}_T(\Omega)$ (see (\ref{st_omega})). Then there exists a
linear isomorphism $A:{\cal S}_T(\Omega)\rightarrow \R^\nu$ such
that $\|A\|_{Y_{p,q,T}(G)\rightarrow
l_{\sigma_{p,q}}^\nu}\underset{\sigma_{p,q}, \, r_0}{\lesssim} 1$,
$\|A^{-1}\| _{l_q^\nu\rightarrow Y_q(G)} \underset{q, \,
r_0}{\lesssim} 1$.
\end{Lem}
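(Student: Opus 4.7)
The strategy is to decompose $V := (\tilde Q - Q_{t_*(n)})\chi_{\tilde U_{t_*(n)}}$ level-by-level as $V = \sum_{t \ge t_*(n)} V_t$ with $V_t f := (\tilde Q f - Q_{t_*(n)}f)\chi_{G_t}$, estimate each $V_t$ by combining Lemma \ref{oper_a} with Sch\"utt's theorem (Theorem \ref{shutt_trm}), and assemble the result through the additivity (\ref{eklspt}) of entropy numbers.

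By (\ref{qtf_x}), (\ref{til_q_def}), (\ref{ptmt_def}), each $V_t$ takes values in the finite-dimensional piecewise-polynomial space ${\cal S}_{\tilde T_{t,m_t}}(\Omega)$ (supported on $G_t$) of dimension $\nu \le r_0\nu_t$, with $\nu_t\le c_3\overline{\nu}_t$ by Assumption \ref{sup3}. Splitting $V_t f = (\tilde Q f - f)\chi_{G_t} + (f - Q_{t_*(n)}f)\chi_{G_t}$ and combining (\ref{fpttm1}) summed in $l_p$ over $E\in \tilde T_{t,m_t}$ with (\ref{pp3_11}) should yield the uniform mixed-norm bound
\begin{align*}
\|V_t f\|_{Y_{p,q,\tilde T_{t,m_t}}(G_t)}\underset{\mathfrak{Z}_0}{\lesssim} 2^{-(1/p-1/q)t_*(n)}\|f\|_{X_p(\Omega)},\qquad t\ge t_*(n).
\end{align*}
Lemma \ref{oper_a} then furnishes an isomorphism $A_t:{\cal S}_{\tilde T_{t,m_t}}(\Omega)\to \R^{\nu}$ with $\|A_t\|_{Y_{p,q,\tilde T_{t,m_t}}\to l_p^{\nu}}\lesssim 1$ and $\|A_t^{-1}\|_{l_q^{\nu}\to Y_q(G_t)}\lesssim 1$, so the factorisation $V_t=A_t^{-1}\circ I_{\nu}\circ (A_tV_t)$ together with (\ref{mult_n}) gives
\begin{align*}
e_{k}(V_t)\underset{\mathfrak{Z}_0}{\lesssim} 2^{-(1/p-1/q)t_*(n)}\cdot e_k\bigl(I_{\nu}:l_p^{\nu}\to l_q^{\nu}\bigr).
\end{align*}

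The final step is to allocate entropy budgets $\{k_t\}_{t\ge t_*(n)}$ satisfying $\sum(k_t-1)\lesssim n$, plug the bound above into Theorem \ref{shutt_trm}, and sum via (\ref{eklspt}). Because (\ref{nu_t_k1}) forces $\overline{\nu}_t$ to grow doubly-exponentially in $t$, only levels close to $t_*(n)$ should be placed in the intermediate Sch\"utt regime $\log\nu\le k\le \nu$ (where the factor $(\log(1+\nu/k)/k)^{1/p-1/q}$ absorbs the dimensional dependence), while the tail $t\gg t_*(n)$ is handled with $k_t\ge \nu_t$ and its exponential gain $2^{-k/\nu}$. Multiplying by $2^{-(1/p-1/q)t_*(n)}\asymp (\log n)^{1/q-1/p}$ (from (\ref{2tn_est})) and using $1/q-1/p<0$ should reproduce (\ref{main_est}).

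The principal obstacle I anticipate is justifying the mixed-norm bound for $V_t$: the $Y_q$-estimate (\ref{pp3_11}) on $f-Q_{t_*(n)}f$ does not directly imply the $Y_{p,q,\tilde T_{t,m_t}}$-estimate needed, since a naive H\"older passage from $l_q$ to $l_p$ over the $\nu_t$ partition pieces would introduce a spurious factor $\nu_t^{1/p-1/q}$. Overcoming this requires exploiting the tree structure of $\tilde{\cal A}_{t_*(n),i}$: telescoping $P_{\hat\xi_{t_*(n),i}}f-P_{\xi}f$ along the path from $\hat\xi_{t_*(n),i}$ to $\xi$ and applying (\ref{f_pom_f_cor}) at each intermediate root, possibly combined with Lemma \ref{lemma_o_razb_dereva1} (or Theorem \ref{lifs_sta}) to aggregate the contributions back into $\|f\|_{X_p(\Omega)}$ without dimensional loss.
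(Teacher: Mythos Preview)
Your proposal does not address the stated lemma at all. Lemma~\ref{oper_a} asserts the existence of a linear isomorphism $A:{\cal S}_T(\Omega)\to\R^\nu$ with controlled norms $\|A\|_{Y_{p,q,T}(G)\to l_{\sigma_{p,q}}^\nu}$ and $\|A^{-1}\|_{l_q^\nu\to Y_q(G)}$; its proof (given in \cite{vas_width_raspr}, not in the present paper) is a direct construction: one picks, on each partition element $E_j\in T$, an Auerbach-type basis of the $r_0$-dimensional space ${\cal P}(E_j)$ with respect to the $Y_q(E_j)$-norm, and assembles the corresponding coordinate maps block by block. There is no entropy-number content in this lemma whatsoever.

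What you have written is instead a sketch of an approach to Lemma~\ref{main_lem}, the estimate $e_n\bigl((\tilde Q-Q_{t_*(n)})\chi_{\tilde U_{t_*(n)}}\bigr)\lesssim n^{1/q-1/p}$, in which Lemma~\ref{oper_a} is merely one of the tools invoked. Even viewed as a proof of Lemma~\ref{main_lem}, your plan has the gap you yourself identify and do not close: the mixed-norm bound $\|V_tf\|_{Y_{p,q,\tilde T_{t,m_t}}(G_t)}\lesssim 2^{-(1/p-1/q)t_*(n)}$ is false in general without additional structure, and the paper's actual proof of Lemma~\ref{main_lem} takes a much more elaborate route. It introduces \emph{$f$-dependent} partitions ${\bf T}_{f,t,l}$ of the tree (via Lemma~\ref{lemma_o_razb_dereva1} applied to $\Phi_{f,t}({\bf W})=\sum_{\xi\in{\bf W}\cap{\bf V}(\Gamma_t)}\|f\|^p_{X_p(\hat F(\xi))}$), then uses Lifshits's covering result (Theorem~\ref{lifs_sta}) to pass from the family $\{P_{{\bf T}_f}\}$ to a single entropy estimate, and only afterwards applies Lemma~\ref{oper_a} and Sch\"utt's theorem to the telescoping differences $P_{\tilde{\bf T}^n_{f,t,l}}-P_{\tilde{\bf T}^n_{f,t,l+1}}$. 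The adaptive, $f$-dependent partitioning is precisely what circumvents the $\nu_t^{1/p-1/q}$ loss you worry about; your level-by-level splitting with fixed $\tilde T_{t,m_t}$ cannot do this.
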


\begin{Lem}
\label{sum_lem} {\rm \cite[formula (60)]{vas_bes}.} Let
$\Lambda_*:(0, \, +\infty) \rightarrow (0, \, +\infty)$ be an
absolutely continuous function such that $\lim \limits_{y\to
+\infty}\frac{y\Lambda _*'(y)} {\Lambda _*(y)}=0$. Then for any
$\varepsilon >0$
\begin{align}
\label{te} t^{-\varepsilon}
\underset{\varepsilon,\Lambda_*}{\lesssim}
\frac{\Lambda_*(ty)}{\Lambda_*(y)}\underset{\varepsilon,
\Lambda_*}{\lesssim} t^\varepsilon,\quad 1\le y<\infty, \;\; 1\le
t<\infty.
\end{align}
\end{Lem}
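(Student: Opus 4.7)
The plan is to bound the entropy of $R = (\tilde Q - Q_{t_*(n)})\chi_{\tilde U_{t_*(n)}}$ by decomposing it into a ``finite-rank middle'' over $t_*(n) \le t < t_{**}(n)$ and an ``infinite-rank tail'' over $t \ge t_{**}(n)$, estimating each piece using Schütt's theorem (Theorem \ref{shutt_trm}) combined with Lemma \ref{oper_a}, the tree-partitioning Lemma \ref{lemma_o_razb_dereva1}, and aggregating via Theorem \ref{lifs_sta}.

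Write $R = R' + R''$ with $R'f = Rf\cdot\chi_{\tilde U_{t_*(n)}\setminus\tilde U_{t_{**}(n)}}$ and $R''f = Rf\cdot\chi_{\tilde U_{t_{**}(n)}}$. For the tail $R''$ I would use $(\tilde Q - Q_{t_*(n)}) = (\tilde Q - I) + (I - Q_{t_*(n)})$. The first summand has norm $\underset{\mathfrak{Z}_0}{\lesssim} n^{1/q-1/p}$ on $\tilde U_{t_{**}(n)}$ by (\ref{fqf}). For the second, the operator $Q_{t_*(n)}$ has image of dimension $\underset{\mathfrak{Z}_0}{\lesssim} \nu_{t_*(n)} \asymp n$ (by (\ref{nu_t_k1}), (\ref{2tn_est})), so Lemma \ref{oper_a} and Theorem \ref{shutt_trm} yield $e_n(Q_{t_*(n)}\chi_{\tilde U_{t_{**}(n)}})\underset{\mathfrak{Z}_0}{\lesssim} n^{1/q-1/p}$; the residual $I\chi_{\tilde U_{t_{**}(n)}}$ piece is then reabsorbed by replacing $I$ with $\tilde Q$ at the cost of another error $\underset{\mathfrak{Z}_0}{\lesssim} n^{1/q-1/p}$ via (\ref{fqf}).

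For the middle part, decompose $R' = \sum_{t_*(n)\le t<t_{**}(n)} R_t$ with $R_tf = (\tilde Qf - Q_{t_*(n)}f)\chi_{G_t}$. Each $R_t$ has image in the finite-dimensional space ${\cal S}_{\tilde T_{t,m_t}}(\Omega)|_{G_t}$, of dimension $\underset{\mathfrak{Z}_0}{\lesssim}\nu_t$ (by (\ref{cttm}), (\ref{tmtt})). I allocate budgets $\{k_t\}$ with $\sum_t k_t\underset{\mathfrak{Z}_0}{\lesssim} n$. For each $t$, apply Lemma \ref{lemma_o_razb_dereva1} to the subgraph $\Gamma_t$ with the weighting $\Phi(W) = \sum_{\xi\in W}\|f\|_{X_p(\hat F(\xi))}^p$ to obtain a partition into $\underset{\mathfrak{Z}_0}{\lesssim} k_t$ subtrees of balanced $X_p$-mass. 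On each subtree, Lemma \ref{oper_a} reduces the problem to an $l_p^{\nu''}\to l_q^{\nu''}$ entropy, which Theorem \ref{shutt_trm} estimates. Theorem \ref{lifs_sta} absorbs the combinatorial overhead of the family of admissible partitions (whose logarithmic cardinality is $\underset{\mathfrak{Z}_0}{\lesssim} k_t$ by the $c_1$-ary branching assumption (\ref{c_v1_a})) into an additive shift in the entropy index. Assembling via (\ref{eklspt}) and choosing $k_t$ geometrically decreasing in $t$ yields $\sum_t e_{k_t}(R_t)\underset{\mathfrak{Z}_0}{\lesssim} n^{1/q-1/p}$.

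The main obstacle is precisely the budget allocation. The operator norm $\|R\|$ is only $\underset{\mathfrak{Z}_0}{\lesssim} (\log n)^{1/q-1/p}$, a full $\log n$ factor worse than the target, so we must genuinely exploit the finite-dimensional structure on each $G_t$ rather than relying on norm bounds. Because $\nu_t$ grows doubly exponentially through (\ref{nu_t_k1}), the slowly varying $\psi_*$ contributes only subpolynomial corrections by Lemma \ref{sum_lem}; a geometric choice $k_t\asymp n\cdot 2^{-\alpha(t-t_*(n))}$ with small $\alpha > 0$ balances the contributions over the $\underset{\mathfrak{Z}_0}{\asymp}\log n$ levels of the middle range, and the slowly varying factor is neutralized by Lemma \ref{sum_lem} inside the Schütt estimates.
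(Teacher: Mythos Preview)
Your proposal does not address the stated lemma at all. Lemma~\ref{sum_lem} is a self-contained assertion about slowly varying functions: given an absolutely continuous $\Lambda_*$ with $y\Lambda_*'(y)/\Lambda_*(y)\to 0$, one must show $t^{-\varepsilon}\underset{\varepsilon,\Lambda_*}{\lesssim}\Lambda_*(ty)/\Lambda_*(y)\underset{\varepsilon,\Lambda_*}{\lesssim}t^{\varepsilon}$. What you have written is instead a sketch of an argument for Lemma~\ref{main_lem}, the entropy bound for $(\tilde Q - Q_{t_*(n)})\chi_{\tilde U_{t_*(n)}}$. These are entirely different statements; Lemma~\ref{sum_lem} is a short calculus fact used as an auxiliary tool inside the proof of Lemma~\ref{main_lem} (and elsewhere), not the other way around.

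The actual proof of Lemma~\ref{sum_lem} is elementary and has nothing to do with entropy numbers, Sch\"utt's theorem, or tree partitions. One writes
\[
\log\frac{\Lambda_*(ty)}{\Lambda_*(y)}=\int_y^{ty}\frac{\Lambda_*'(s)}{\Lambda_*(s)}\,ds=\int_y^{ty}\frac{s\Lambda_*'(s)}{\Lambda_*(s)}\cdot\frac{ds}{s}.
\]
By hypothesis there is $y_0=y_0(\varepsilon,\Lambda_*)$ with $|s\Lambda_*'(s)/\Lambda_*(s)|<\varepsilon$ for $s\ge y_0$, so for $y\ge y_0$ the integral is bounded in absolute value by $\varepsilon\log t$, giving $t^{-\varepsilon}\le\Lambda_*(ty)/\Lambda_*(y)\le t^{\varepsilon}$. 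The range $1\le y\le y_0$ is handled by the positivity and continuity of $\Lambda_*$ on the compact interval $[1,y_0]$ together with the already-established bound for $y\ge y_0$, absorbing the resulting constant into the implicit dependence on $(\varepsilon,\Lambda_*)$. In the paper the lemma is simply cited from \cite[formula~(60)]{vas_bes} and not reproved.
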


Given $t_*(n)\le t\le t_{**}(n)$, $i\in \hat J_t$, we denote
$$
\overline{\cal A}_{t,i}= \left\{ \begin{array}{l} {\cal A} _{t,i}
\quad \text{for}\quad t<t_{**}(n), \\ \tilde {\cal
A}_{t_{**}(n),i} \quad \text{for} \quad t=t_{**}(n),
\end{array}\right.
$$
$\overline{\Gamma}_t=\Gamma_t$ for $t<t_{**}(n)$,
$\overline{\Gamma}_{t_{**}(n)}=\tilde \Gamma_{t_{**}(n)}$. Let
${\cal D}$ be a subtree in $\tilde \Gamma_{t_*(n)}$. We set
\begin{align}
\label{hjtd} \hat J_{t,{\cal D}} =\{i\in \hat J_t:\; {\bf
V}(\overline{\cal A}_{t,i}) \cap {\bf V}({\cal D}) \ne
\varnothing\}, \quad \overline{{\cal D}}_{t,i} =\overline{\cal
A}_{t,i} \cap {\cal D}.
\end{align}


Throughout we take as $\varepsilon=\varepsilon(\mathfrak{Z}_0)>0$
a sufficiently small number (it will be chosen later by
$\mathfrak{Z}_0$).

\renewcommand{\proofname}{\bf Proof of Lemma \ref{main_lem}}
\begin{proof} We set $t_*'(n)=\max\{t_*(n), \, t_0\}$.

{\bf Step 1.} Given $t<t_{**}(n)$, we denote by $\hat {\cal A}_t$
the subtree in ${\cal A}$ with vertex set ${\bf V}({\cal A})
\backslash {\bf V}(\tilde \Gamma_{t+1})$.

Let $f\in B\hat X_p(\Omega)$. For each $t'_*(n)\le t< t_{**}(n)$
we define the mapping $\Phi_{f,t}:2^{{\bf V}(\hat {\cal A}_t)}
\rightarrow \R_+$ by
$$
\Phi_{f,t}({\bf W}) =\sum \limits _{\xi \in {\bf W} \cap {\bf
V}({\Gamma} _t)} \|f\|^p_{X_p(\hat F(\xi))}.
$$
Then for any disjoint sets ${\bf W}_1$, ${\bf W}_2$ we have
\begin{align}
\label{phi_ft} \Phi_{f,t}({\bf W}_1 \sqcup {\bf W}_2)
=\Phi_{f,t}({\bf W}_1) +\Phi_{f,t}({\bf W}_2).
\end{align}

For each $t_*'(n)\le t<t_{**}(n)$ we set
\begin{align}
\label{nt} \begin{array}{c} \varepsilon_t = \sum \limits _{\xi \in
{\bf V}(\Gamma_t)}\|f\|^p_{X_p(\hat F(\xi))}; \\ n_t= \lceil
n\cdot 2^{-t} \varepsilon_t\rceil \quad \text{if}\quad
\varepsilon_t>0; \quad n_t=1 \quad \text{if}\quad \varepsilon_t=0.
\end{array}
\end{align}
Then
\begin{align}
\label{sumet} \sum \limits _{t=t_*'(n)}^{t_{**}(n)-1} \varepsilon
_t\le 1,
\end{align}
\begin{align}
\label{sltnt} \sum \limits _{t=t_*'(n)}^{t_{**}(n)-1} 2^t n_t \le
\sum \limits _{t=t_*'(n)}^{t_{**}(n)-1} n \varepsilon_t +\sum
\limits _{t=t_*'(n)}^{t_{**}(n)-1} 2^t \stackrel{(\ref{2tn_est}),
(\ref{sumet})}{\le} \hat C(\mathfrak{Z}_0)n
\end{align}
with $\hat C(\mathfrak{Z}_0)\in \N$. By (\ref{nt}),
\begin{align}
\label{pft_at} \Phi_{f,t}({\bf V}(\hat {\cal A}_t))
=\varepsilon_t.
\end{align}

Let $f\ne 0$. It follows from Lemma \ref{lemma_o_razb_dereva1} and
(\ref{c_v1_a}) that there exists a number $C(\mathfrak{Z}_0)\in
\N$ and a family of partitions $\{{\bf T}_{f,t,l}\}_{0\le l\le
\log n_t}$ of the tree $\hat{\cal A}_t$, which satisfy the
following conditions:
\begin{align}
\label{ctftl} {\rm card}\, {\bf T}_{f,t,l} \le C(\mathfrak{Z}_0)
2^{-l}n_t,
\end{align}
\begin{align}
\label{cabb} {\rm card}\, \{{\cal A}''\in {\bf T}_{f,t,l\pm 1}:\;
{\bf V}({\cal A'})\cap {\bf V}({\cal A}'')\ne \varnothing\}
\underset{\mathfrak{Z}_0}{\lesssim} 1, \quad {\cal A}'\in {\bf
T}_{f,t,l},
\end{align}
and for any subtree ${\cal A}'\in {\bf T}_{f,t,l}$ such that ${\rm
card}\, {\bf V}({\cal A}')\ge 2$
\begin{align}
\label{pft_vapr} \Phi_{f,t}({\bf V}({\cal A}'))
\stackrel{(\ref{pft_at})}{\le} C(\mathfrak{Z}_0) 2^l n_t^{-1}
\varepsilon_t.
\end{align}
Moreover, we may assume that
\begin{align}
\label{tftfl} {\bf T}_{f,t,\lfloor \log n_t \rfloor}=\{\hat{\cal
A}_t\}.
\end{align}
For $f\equiv 0$ we set ${\bf T}_{f,t,l}=\{\hat{\cal A}_t\}$.

Given $t<t_{**}(n)$, we denote
\begin{align}
\label{wt_def} {\bf W}_{t} =\left\{ \xi \in {\bf V}({\Gamma}_t):\;
\{\xi\} \in {\bf T}_{f,t,0}, \; \|f\|_{X_p(\hat F(\xi))}^p >
C(\mathfrak{Z}_0)  n_t^{-1} \varepsilon_t\right\},
\end{align}
\begin{align}
\label{ovr_wt_def} \overline{{\bf W}}_{t}=\{ \xi \in {\bf
W}_{t}:\; {\bf V}_1^{\cal A}(\xi) \cap \{\hat \xi_{t+1,j}\}_{j\in
\hat J_{t+1}} \ne \varnothing\},
\end{align}
\begin{align}
\label{st_def} S_{t} =\{\xi \in {\bf V}({\Gamma}_t):\; \exists
{\cal D}\in {\bf T}_{f,t,0}:\; \xi \in {\bf V}_{\min}({\cal D})\},
\end{align}
\begin{align}
\label{hst_def} \begin{array}{c} \hat S_{t} =S_{t}\cup \left(\cup
_{\xi \in \overline{\bf W}_{t-1}} [{\bf V}_1^{{\cal A}}(\xi)\cap
{\bf V}(\Gamma_t)]\right) \quad \text{for}\quad t<t_{**}(n), \\
\hat S_{t_{**}(n)}=\cup _{\xi \in \overline{\bf W}_{t_{**}(n)-1}}
[{\bf V}_1^{{\cal A}}(\xi) \cap {\bf V}(\Gamma_{t_{**}(n)})],
\end{array}
\end{align}
\begin{align}
\label{hs_def} \hat S=\left( \cup _{t=t_*'(n)}^{t_{**}(n)} \hat
S_{t}\right) \cup \{\xi_{t_*'(n),j}\}_{j\in \hat J_{t_*'(n)}}.
\end{align}
Then
\begin{align}
\label{hsbst} \hat S\backslash \cup _{t=t_*'(n)} ^{t_{**}(n)-1}
S_t \subset \cup_{t=t_*'(n)}^{t_{**}(n)} \{\hat\xi_{t,j}:\; j\in
\hat J_t\}, \qquad {\bf W}_t\subset S_t.
\end{align}

For each vertex $\xi \in \hat S$ we denote by ${\cal D}_{(\xi)}$
the tree with vertex set
\begin{align}
\label{vdxi} {\bf V}({\cal D}_{(\xi)}) =\{\xi'\ge \xi:\; [\xi, \,
\xi']\cap \hat S=\{\xi\}\}.
\end{align}

We set
\begin{align}
\label{bftf} {\bf T}_{f} =\{{\cal D}_{(\xi)}:\; \xi \in \hat S\}.
\end{align}
Then ${\bf T}_{f}$ is a partition of the graph $\tilde
\Gamma_{t_*'(n)}$ into subtrees.

{\bf Step 2.} We say that ${\cal D}\in \tilde {\bf T}_{f}$ if
${\cal D}\in \mathbf{T}_f$ and for any $\xi\in {\bf W}_{t}$ we
have ${\cal D}\ne \{\xi\}$.

Let $({\cal D}, \, \xi_*)\in \tilde {\bf T}_{f}$. Then $\xi_* \in
{\bf V}(\tilde\Gamma_{t'_*(n)})$. From Assumption \ref{sup1} and
the inequality $p<q$ it follows that
$$
\|f-P_{\xi_*}f\|^q_{Y_q(\Omega_{\cal D})}
\stackrel{(\ref{f_pom_f}),
(\ref{hjtd})}{\underset{\mathfrak{Z}_0}{\lesssim}} \sum \limits
_{t=t_*'(n)} ^{t_{**}(n)} 2^{t\left(1-\frac qp\right)}\sum \limits
_{j\in \hat J_{t,{\cal D}}} \|f\|^q _{X_p(\Omega_{\overline{{\cal
D}}_{t,j}})},
$$
$$
\|f-\tilde Qf\|^q_{Y_q(\Omega_{\cal D})}
\stackrel{(\ref{til_q_def})}{=} \sum \limits _{t=t_*'(n)}
^{t_{**}(n)} \sum \limits _{j\in \hat J_{t,{\cal D}}}
\|f-P_{t,m_t}f\|^q_{Y_q(\Omega_{\overline{\cal D}_{t,j}})}
\stackrel{(\ref{fpttm1})}{\underset{\mathfrak{Z}_0}{\lesssim}}
$$
$$
\lesssim \sum \limits _{t=t_*'(n)} ^{t_{**}(n)} 2^{t\left(1-\frac
qp\right)}\sum \limits _{j\in \hat J_{t,{\cal D}}} \|f\|^q
_{X_p(\Omega_{\overline{{\cal D}}_{t,j}})}.
$$
Hence,
\begin{align}
\label{m1} \sum \limits _{({\cal D}, \, \xi_*)\in \tilde {\bf
T}_{f}} \|f-P_{\xi_*}f\|^q_{Y_q(\Omega_{\cal D})}
\underset{\mathfrak{Z}_0}{\lesssim} \sum \limits _{t=t_*'(n)}
^{t_{**}(n)} 2^{t\left(1-\frac qp\right)}\sum \limits _{({\cal D},
\, \xi_*)\in \tilde {\bf T}_{f}}\sum \limits _{j\in \hat
J_{t,{\cal D}}} \|f\|^q _{X_p(\Omega_{\overline{{\cal
D}}_{t,j}})},
\end{align}
\begin{align}
\label{m2} \sum \limits _{({\cal D}, \, \xi_*)\in \tilde {\bf
T}_{f}} \|f-\tilde Qf\|^q_{Y_q(\Omega_{\cal D})}
\underset{\mathfrak{Z}_0}{\lesssim} \sum \limits _{t=t_*'(n)}
^{t_{**}(n)} 2^{t\left(1-\frac qp\right)}\sum \limits _{({\cal D},
\, \xi_*)\in \tilde {\bf T}_{f}}\sum \limits _{j\in \hat
J_{t,{\cal D}}} \|f\|^q _{X_p(\Omega_{\overline{{\cal
D}}_{t,j}})}.
\end{align}

Denote by $\tilde{\bf T}_{f,t}$ the family of trees $\tilde {\cal
D} \in {\bf T}_{f,t,0}$ such that
\begin{align}
\label{ttft} \sum \limits _{\xi \in {\bf V}(\tilde {\cal D}) \cap
{\bf V}(\Gamma_t)} \|f\|^p _{X_p(\hat F(\xi))} \le
C(\mathfrak{Z}_0)n_t^{-1}\varepsilon_t.
\end{align}

{\bf Assertion 1.} Let ${\cal D}\in \tilde{\bf T}_{f}$,
$t_*'(n)\le t<t_{**}(n)$, $j\in J_{t,{\cal D}}$. Then there exists
a tree ${\cal T} \in \tilde {\bf T}_{f,t}$ such that ${\cal
D}_{t,j}={\cal T}_{t,j}$.

{\it Proof of Assertion 1.} Let ${\cal D}={\cal D}_{(\xi_*)}$,
$\xi_*\in \hat S$. Since $j\in J_{t,{\cal D}}$, the vertices
$\xi_*$ and $\hat \xi_{t,j}$ are comparable. There exists a tree
${\cal T} \in {\bf T}_{f,t,0}$ such that
\begin{align}
\label{mxi} \max\{\xi_*,\hat \xi_{t,j}\} \in {\bf V}({\cal T}).
\end{align}
We claim that ${\bf V}({\cal D}_{t,j}) \subset {\bf V}({\cal T})$.
Indeed, let $\xi\in {\bf V}({\cal D}_{t,j}) \backslash {\bf
V}({\cal T})$. Then $\xi> \max\{\hat \xi_{t,j}, \, \xi_*\}$. We
set
$$
\eta_*=\min \{\eta \in [\max\{\xi_*, \, \hat \xi_{t,j}\}, \,
\xi]:\; \eta \notin {\bf V}({\cal T})\} >\xi_*.
$$
Then $\eta_*\in {\bf V}(\Gamma_t)$ and there exists a tree ${\cal
T}'\in {\bf T}_{f,t,0}$ such that $\eta_*$ is the minimal vertex
of ${\cal T}'$. Hence, $\eta \stackrel{(\ref{st_def})}{\in} S_t
\stackrel{(\ref{hst_def}),(\ref{hs_def})}{\subset} \hat S$;
therefore, $\xi \stackrel{(\ref{vdxi})}{\notin} {\bf V}({\cal
D})$, which leads to a contradiction.

Thus, ${\bf V}({\cal T}_{t,j})\supset {\bf V}({\cal D}_{t,j}) \ne
\varnothing$.

Let us show that ${\bf V}({\cal T}_{t,j})\subset {\bf V}({\cal
D}_{t,j})$. Denote by $\hat \xi$ the minimal vertex of the tree
${\cal T}$. Since ${\bf V}({\cal T}_{t,j})\ne \varnothing$, the
vertices $\hat \xi$ and $\hat \xi_{t,j}$ are comparable. Let us
show that $\max\{\xi_*, \, \hat \xi_{t,j}\}=\max\{\hat \xi, \,
\hat \xi_{t,j}\}$. Indeed, by (\ref{mxi}) we have $\max\{\xi_*, \,
\hat \xi_{t,j}\}\ge \max\{\hat \xi, \, \hat \xi_{t,j}\}$. If the
inequality is strict, then $\xi_*>\hat \xi_{t,j}$. In addition,
$\xi_*\in {\bf V}({\cal A}_{t,j}) \cap \hat S$, and by
(\ref{st_def}) and (\ref{hsbst}) we get $\xi_*\in S_t$; i.e.,
$\xi_*$ is the minimal vertex of some tree from the partition
${\bf T}_{f,t,0}$. By (\ref{mxi}), $\xi_*{=}\hat \xi$.

Thus,
\begin{align}
\label{max_xi} \max\{\hat \xi, \, \hat \xi_{t,j}\}=\max\{\xi_*, \,
\hat \xi_{t,j}\}\in {\bf V}({\cal D}_{t,j}).
\end{align}
Suppose that there exists a vertex $\xi\in {\bf V}({\cal T}_{t,j})
\backslash {\bf V}({\cal D})$. We set
\begin{align}
\label{eta_st} \eta_{**}=\min \left(\left[\max\{\hat \xi, \, \hat
\xi_{t,j}\}, \, \xi \right]\backslash {\bf V}({\cal D})\right).
\end{align}
Then $\eta_{**}>\max\{\hat \xi, \, \hat \xi_{t,j}\}$ and
$\eta_{**}\in {\bf V}({\cal A}_{t,j})$. Hence, $\eta_{**} \notin
\cup _{t'=t_*'(n)}^{t_{**}(n)} \{\hat \xi _{t',i}:\; i\in \hat
J_{t'}\}$. Further, $\eta_{**}\in {\bf V}({\cal T}_{t,j})$;
therefore, $\eta_{**}\notin \cup _{t'=t_*'(n)} ^{t_{**}(n)}
S_{t'}$ by (\ref{st_def}). From (\ref{hsbst}) it follows that
$\eta_{**}\notin \hat S$. This together with (\ref{vdxi}),
(\ref{max_xi}), (\ref{eta_st}) yields that $[\xi_*, \,
\eta_{**}]\cap \hat S=\{\xi_*\}$; applying (\ref{vdxi}) once
again, we get that $\eta_{**}\in {\bf V}({\cal D})$, which leads
to a contradiction.

It remains to check that ${\cal T}\in \tilde {\bf T}_{f,t}$. If
${\cal T}\notin \tilde {\bf T}_{f,t}$, then ${\bf V}({\cal T})
\stackrel{(\ref{pft_vapr}), (\ref{ttft})}{=} \{\xi\}$,
$\|f\|_{X_p(\hat F(\xi))}
>C(\mathfrak{Z}_0) n_t^{-1} \varepsilon_t$; i.e., $\xi
\stackrel{(\ref{wt_def})}{\in} {\bf W}_t$. Then either $\xi \in
\overline{\bf W}_t$ (in this case, ${\bf V}_1^{{\cal A}}(\xi)
\stackrel{(\ref{st_def}), (\ref{hst_def})}{\subset} S_t \cup \hat
S_{t+1}$) or ${\bf V}_1^{{\cal A}}(\xi)
\stackrel{(\ref{st_def})}{\subset} S_t$. Hence, ${\bf V}_1^{{\cal
A}}(\xi) \stackrel{(\ref{hs_def})}{\subset} \hat S$. This implies
that ${\bf V}({\cal D}) \stackrel{(\ref{vdxi})}{=}\{\xi\}$ and
${\cal D} \notin \tilde{\bf T}_f$. This completes the proof of
Assertion 1. $\diamond$

By (\ref{st_def}), the first inclusion of (\ref{hsbst}),
(\ref{vdxi}) and (\ref{bftf}), for any tree ${\cal D}\in {\bf
T}_f$ we have $\overline{{\cal D}}_{t_{**}(n),j}=\tilde {\cal
A}_{t_{**}(n),j}$, $j\in J_{t_{**}(n)}$. This together with
Assertion 1 yields that
$$
\sum \limits _{t=t_*'(n)} ^{t_{**}(n)} 2^{t\left(1-\frac
qp\right)}\sum \limits _{({\cal D}, \, \xi_*)\in \tilde {\bf
T}_{f}}\sum \limits _{j\in \hat J_{t,{\cal D}}} \|f\|^q
_{X_p(\Omega_{\overline{{\cal D}}_{t,j}})} \le \sum \limits
_{t=t_*'(n)}^{t_{**}(n)-1}2^{t\left(1-\frac qp\right)} \sum
\limits _{{\cal T} \in \tilde {\bf T}_{f,t}}\sum \limits _{j\in
J_{t, {\cal T}}} \|f\|^q_{X_p(\Omega _{{\cal T}_{t,j}})}+
$$
$$
+ 2^{t_{**}(n)\left(1-\frac qp\right)}\sum \limits _{j\in \hat
J_{t_{**}(n)}} \|f\|^q_{X_p(\Omega _{\tilde{\cal
A}_{{t_{**}(n),j}}})}
\stackrel{(\ref{2tn_est})}{\underset{\mathfrak{Z}_0}{\lesssim}}
n^{1-\frac qp}+
$$
$$
+\sum \limits _{t=t_*'(n)}^{t_{**}(n)-1}2^{t\left(1-\frac
qp\right)} \sum \limits _{{\cal T} \in \tilde {\bf T}_{f,t}}
\left(\sum \limits _{\xi \in {\bf V}({\cal T})\cap {\bf
V}({\Gamma_t})} \|f\|^p_{X_p(\hat F(\xi))}\right)^{\frac
qp}\stackrel{(\ref{ttft})}{\underset{\mathfrak{Z}_0}{\lesssim}}
$$
$$
\lesssim n^{1-\frac qp}+\sum \limits
_{t=t_*'(n)}^{t_{**}(n)-1}2^{t\left(1-\frac qp\right)} \sum
\limits _{{\cal T} \in \tilde {\bf T}_{f,t}} \varepsilon_t^{\frac
qp}n_t^{-\frac qp}
\stackrel{(\ref{ctftl})}{\underset{\mathfrak{Z}_0}{\lesssim}}
n^{1-\frac qp}+\sum \limits
_{t=t_*'(n)}^{t_{**}(n)-1}2^{t\left(1-\frac qp\right)}
\varepsilon_t^{\frac qp} n_t^{1-\frac qp};
$$
i.e.,
\begin{align}
\label{m3} \sum \limits _{t=t_*'(n)} ^{t_{**}(n)}
2^{t\left(1-\frac qp\right)}\sum \limits _{({\cal D}, \, \xi_*)\in
\tilde {\bf T}_{f}}\sum \limits _{j\in \hat J_{t,{\cal D}}}
\|f\|^q _{X_p(\Omega_{\overline{{\cal D}}_{t,j}})}
{\underset{\mathfrak{Z}_0}{\lesssim}} n^{1-\frac qp}+\sum \limits
_{t=t_*'(n)}^{t_{**}(n)-1}2^{t\left(1-\frac qp\right)}
\varepsilon_t^{\frac qp} n_t^{1-\frac qp}=:A.
\end{align}
If $n\cdot 2^{-t}\varepsilon_t\ge 1$, then
$n_t\stackrel{(\ref{nt})}{\asymp} n\cdot 2^{-t}\varepsilon_t$;
hence,
$$
2^{t\left(1-\frac qp\right)} \varepsilon_t^{\frac qp} n_t^{1-\frac
qp} \underset{p,q}{\asymp} 2^{t\left(1-\frac qp\right)}
\varepsilon_t^{\frac qp} n^{1-\frac qp} 2^{-t\left(1-\frac
qp\right)} \varepsilon_t^{1-\frac qp}=  n^{1-\frac qp}
\varepsilon_t.
$$
If $n\cdot 2^{-t}\varepsilon_t< 1$, then $n_t=1$. From
(\ref{sumet}) it follows that $\varepsilon_t\le 1$. Therefore,
\begin{align}
\label{m4} A\underset{\mathfrak{Z}_0}{\lesssim} n^{1-\frac
qp}+\sum \limits _{t=t_*'(n)}^{t_{**}(n)-1}  n^{1-\frac
qp}\varepsilon_t +\sum \limits _{t=t_*'(n)}^{t_{**}(n)-1}
2^{t\left(1-\frac qp\right)}
\stackrel{(\ref{sumet})}{\underset{\mathfrak{Z}_0}{\lesssim}}
n^{1-\frac qp}+ 2^{t_{**}(n)\left(1-\frac qp\right)}
\stackrel{(\ref{2tn_est})}{\underset{\mathfrak{Z}_0}{\lesssim}}
n^{1-\frac qp}.
\end{align}

From (\ref{m1}), (\ref{m2}), (\ref{m3}), (\ref{m4}) we get that
$$
\sum \limits _{({\cal D}, \, \xi_*)\in \tilde {\bf T}_{f}}
(\|f-P_{\xi_*}f\|^q_{Y_q(\Omega_{\cal D})}+\|f-\tilde
Qf\|^q_{Y_q(\Omega_{\cal D})}) \underset{\mathfrak{Z}_0}{\lesssim}
n^{1-\frac qp};
$$
this yields
\begin{align}
\label{sum_f_pf_yq} \sum \limits _{({\cal D}, \, \xi_*)\in \tilde
{\bf T}_{f}} \|\tilde Qf-P_{\xi_*}f\|^q_{Y_q(\Omega_{\cal D})}
\underset{\mathfrak{Z}_0}{\lesssim}  n^{1-\frac qp}.
\end{align}

{\bf Step 3.} Let ${\bf T}$ be a partition of $\tilde
\Gamma_{t_*'(n)}$ into subtrees. For each $({\cal D}, \, \xi_*)
\in {\bf T}$ we set
\begin{align}
\label{ptf} P_{\bf T}f|_{\Omega_{{\cal D}}}
=P_{\xi_*}f-Q_{t_*'(n)}f.
\end{align}
In addition, we put
\begin{align}
\label{ptf_dop} P_{\bf T}f|_{\Omega\backslash \tilde
U_{t_*'(n)}}=0.
\end{align}
Then
$$
\|\tilde Qf-Q_{t_*'(n)}f-P_{\mathbf{T}}f\|^q_{Y_q(\tilde
U_{t_*'(n)})} =\sum \limits _{({\cal D}, \, \xi_*)\in {\bf T}}
\|\tilde Qf -P_{\xi_*}f\|^q_{Y_q(\Omega_{\cal D})}
\stackrel{(\ref{ptmt_def}),(\ref{til_q_def})}{=}
$$
$$
=\sum \limits _{({\cal D}, \, \xi_*)\in {\bf T}, \, {\rm card}\,
{\bf V}({\cal D})\ge 2}\|\tilde Qf
-P_{\xi_*}f\|^q_{Y_q(\Omega_{\cal D})}.
$$
In particular, from (\ref{sum_f_pf_yq}) and the definition of
$\tilde {\bf T}_f$ (see the beginning of Step 2) it follows that
\begin{align}
\label{fpf_appr} \|\tilde Qf-Q_{t_*'(n)}f
-P_{\mathbf{T}_f}f\|^q_{Y_q(\tilde U_{t_*'(n)})}
\underset{\mathfrak{Z}_0}{\lesssim} n^{1-\frac qp}.
\end{align}

{\bf Step 4.} Let us define the family ${\cal N}$ of partitions of
the graph $\tilde \Gamma_{t_*'(n)}$ into subtrees. Each of these
partitions is constructed as follows. Let $C(\mathfrak{Z}_0)$,
$\hat C(\mathfrak{Z}_0)$ be as defined at Step 1.
\begin{enumerate}
\item We choose the sequence
$\{n_t\}_{t=t_*'(n)}^{t_{**}(n)-1}\subset \N$ such that
\begin{align}
\label{sl_tnt} \sum \limits _{t=t_*'(n)}^{t_{**}(n)-1} 2^t n_t\le
\hat C(\mathfrak{Z}_0)n.
\end{align}
\item For each $t\in \{t_*'(n), \, \dots, \, t_{**}(n)-1\}$
we take $C(\mathfrak{Z}_0)n_t$ vertices in ${\bf V}(\Gamma_t)$
(some of them may coincide); we denote this set by ${\bf U}_t$.
\item For each $t\in \{t_*'(n), \, \dots, \, t_{**}(n)-1\}$ we
choose an arbitrary subset $\hat{\bf V}_t$ of $\{ \xi \in {\bf
U}_t:\; {\bf V}_1^{{\cal A}}(\xi) \cap {\bf V}(\Gamma_{t+1}) \ne
\varnothing\}$.
\item Let
$$
{\bf U}=\cup _{j\in \hat J_{t_*'(n)}} \{\hat \xi_{t_*'(n),j}\}
\cup \left(\cup _{t=t_*'(n)}^{t_{**}(n)-1} {\bf U}_t\right) \cup
\left(\cup _{t=t_*'(n)}^{t_{**}(n)-1} \cup_{\xi\in \hat{\bf V}_t}
[{\bf V}_1^{{\cal A}}(\xi) \cap {\bf V}(\Gamma_{t+1})]\right).
$$
This vertex set generates the desired partition ${\bf T}$ of the
graph $\tilde \Gamma_{t_*'(n)}$ into subtrees:
\begin{align}
\label{tudef} {\bf T}=\{{\cal D}'_{(\xi)}:\; \xi\in {\bf U}\},
\quad {\bf V}({\cal D}'_{(\xi)}) =\{\xi'\ge \xi:\; [\xi, \,
\xi']\cap {\bf U}=\{\xi\}\}.
\end{align}
\end{enumerate}
Let us estimate the value $|{\cal N}|$.

\begin{enumerate}
\item First we estimate the number of choices of
$\{n_t\}_{t=t_*'(n)}^{t_{**}(n)-1}$ (we denote this value by
$N_1$). Let $1\le l\le \hat C({\mathfrak{Z}_0})n$. The number of
choices of $n_t\in \N$ such that $\sum \limits
_{t=t_*'(n)}^{t_{**}(n)-1} 2^t n_t=l$ can be estimated from above
by the number of choices of numbers $\hat n_t\in \N$ such that
$\sum \limits _{t=t_*'(n)}^{t_{**}(n)-1}\hat n_t=l$. The last
magnitude can be estimated from above by the number of partitions
of $\{1, \, \dots, \, l\}$ into $t_{**}(n)-t_*'(n)$ intervals.
This value does not exceed $(l+ t_{**}(n)-t_*'(n)) ^{t_{**}(n)-
t_*'(n)-1}$. Hence,
$$
N_1\le \sum \limits _{1\le l\le \hat C({\mathfrak{Z}_0})n} (l+
t_{**}(n)-t_*'(n))^{t_{**}(n)-t_*'(n)-1}\le $$$$\le\sum \limits
_{k=1}^{\hat C({\mathfrak{Z}_0})n+t_{**}(n)} k^{t_{**}(n)-1}
\underset{\mathfrak{Z}_0}{\lesssim} (\hat
C({\mathfrak{Z}_0})n+t_{**}(n))^{t_{**}(n)}=:N_1'.
$$
\item Given the sequence $\{n_t\}_{t=t_*'(n)} ^{t_{**}(n)-1}$, we
estimate the number of choices of a set $\cup
_{t=t_*'(n)}^{t_{**}(n)-1} {\bf U}_t$ (we denote this magnitude by
$N_2$). We have
$$
N_2\stackrel{(\ref{nu_t_k1})}{\le} \prod
_{t=t_*'(n)}^{t_{**}(n)-1}
(c_3\overline{\nu}_t)^{C(\mathfrak{Z}_0)n_t}=:N_2'.
$$
\item Let us estimate the number of choices of $\hat{\bf V}_t$, $t_*'(n)
\le t\le t_{**}(n)-1$ (denote this value by $N_3$). Since ${\rm
card}\, {\bf U}_t\le C(\mathfrak{Z}_0)n_t$, we have
$$
N_3\le \prod _{t=t_*'(n)}^{t_{**}(n)-1} 2^{C(\mathfrak{Z}_0)n_t}
=:N_3'.
$$
\end{enumerate}
Thus, $|{\cal N}|\le N_1'N_2'N_3'$, which yields
$$
\log |{\cal N}| \le t_{**}(n) \log (\hat C({\mathfrak{Z}_0})n+
t_{**}(n)) +\sum \limits _{t=t_*'(n)}^{t_{**}(n)-1}
C(\mathfrak{Z}_0)n_t \log (c_3\overline{\nu}_t) +$$$$+\sum \limits
_{t=t_*'(n)}^{t_{**}(n)-1} C(\mathfrak{Z}_0)n_t
\stackrel{(\ref{nu_t_k1}),(\ref{2tn_est}),
(\ref{te})}{\underset{\mathfrak{Z}_0}{\lesssim}}
 (\log n)^2+\sum \limits _{t=t_*'(n)}^{t_{**}(n)-1} 2^tn_t
\stackrel{(\ref{sl_tnt})}{\underset{\mathfrak{Z}_0}{\lesssim}} n;
$$
i.e.,
\begin{align}
\label{logn} \log |{\cal N}|{\underset{\mathfrak{Z}_0}{\lesssim}}
n.
\end{align}

{\bf Step 5.} Denote by ${\cal N}'$ the family of partitions ${\bf
T}_f$, $f\in B\hat X_p(\Omega)$ (see (\ref{bftf})). Then ${\cal
N}' \subset {\cal N}$ (it follows from
(\ref{wt_def})--(\ref{hs_def}), (\ref{vdxi}), (\ref{tudef}) and
from the estimates (\ref{sltnt}), (\ref{ctftl})). By (\ref{logn})
and Theorem \ref{lifs_sta}, there exists
$l_*=l_*(\mathfrak{Z}_0)\in \N$ such that
$$
e_{l_*n}(\tilde Q-Q_{t_*'(n)}:\hat X_p(\Omega) \rightarrow
Y_q(\tilde U_{t_*'(n)})) \le
$$
$$
\le\sup _{f\in B\hat X_p(\Omega)} \inf _{{\bf T}\in {\cal N}'}
\|\tilde Qf-Q_{t_*'(n)}f-P_{{\bf T}}f\| _{Y_q(\tilde U_{t_*'(n)})}
+\sup _{{\bf T}\in {\cal N}'} e_n(P_{{\bf T}}:\hat X_p(\Omega)
\rightarrow Y_q(\tilde U_{t_*'(n)})) \le
$$
$$
\le \sup _{f\in B\hat X_p(\Omega)} \|\tilde Qf-Q_{t_*'(n)}f-
P_{\mathbf{T}_f}f\|_{Y_q(\tilde U_{t_*'(n)})}+\sup _{{\bf T}\in
{\cal N}'} e_n(P_{{\bf T}}:\hat X_p(\Omega) \rightarrow Y_q(\tilde
U_{t_*'(n)})) \underset {\mathfrak{Z}_0} {\lesssim}
$$
$$
\stackrel{(\ref{fpf_appr})}{\underset {\mathfrak{Z}_0} {\lesssim}}
n^{\frac 1q-\frac 1p}+\sup _{{\bf T}\in {\cal N}'} e_n(P_{{\bf
T}}:\hat X_p(\Omega) \rightarrow Y_q(\tilde U_{t_*'(n)})).
$$

{\bf Step 6.} It remains to prove that
\begin{align}
\label{enptf} e_n(P_{{\bf T}_f}:\hat X_p(\Omega) \rightarrow
Y_q(\tilde U_{t_*'(n)}))\underset{\mathfrak{Z}_0}{\lesssim}
n^{\frac 1q-\frac 1p},
\end{align}
where $f\in B\hat X_p(\Omega)$.

Recall that ${\bf T}_f=\{{\cal D}_{(\xi)}:\; \xi \in \hat S\}$,
where $\hat S$ is defined by (\ref{hs_def}) and ${\cal D}_{(\xi)}$
is defined by formula (\ref{vdxi}). Also we observe that if
$\xi=\hat\xi_{t_*'(n),j}$ for some $j\in \hat J_{t_*'(n)}$, then
\begin{align}
\label{ptf0} P_{{\bf T}_f}|_{\Omega_{{\cal D}_{(\xi)}}}
\stackrel{(\ref{ptf})}{=} (P_{\hat
\xi_{t_*'(n),j}}-Q_{t_*'(n)})|_{\Omega_{{\cal D}_{(\hat
\xi_{t_*'(n),j})}}} \stackrel{(\ref{qtf_x})}{=} 0.
\end{align}

Let $t_*'(n)\le t<t_{**}(n)$. Recall that the sets $S_t$ and
$\overline{{\bf W}}_t$ are defined by formulas (\ref{st_def}) and
(\ref{ovr_wt_def}), respectively. We set
\begin{align}
\label{vft_def} {\bf V}_{f,t}=S_t \cup \{\xi \in {\bf
V}(\Gamma_{t+1}) \backslash S_{t+1}:\; \exists \eta\in
\overline{\bf W}_t:\; \xi \in {\bf V}_1^{\cal A}(\eta)\}.
\end{align}
Notice that ${\bf V}_{f,t} \subset \hat S$ by (\ref{hst_def}),
(\ref{hs_def}). Denote by $\hat \Gamma_{f,t}$ the maximal subgraph
in ${\cal A}$ on vertex set
\begin{align}
\label{vgft} {\bf V}(\hat \Gamma_{f,t})=\cup _{\xi \in {\bf
V}_{f,t}} {\bf V}({\cal D}_{(\xi)}).
\end{align}
We set
\begin{align}
\label{pft} \Omega_{(f,t)} =\cup _{\xi \in {\bf V}(\hat
\Gamma_{f,t})} \hat F(\xi), \quad P_{(f,t)}h=P_{{\bf T}_f}h
\cdot\chi _{\Omega_{(f,t)}}, \quad h\in Y_q(\Omega).
\end{align}
Denote
\begin{align}
\label{tpr_ft} {\bf T}'_{f,t}=\{{\cal D}_{(\xi)}\}_{\xi \in {\bf
V}_{f,t}}.
\end{align}
Then ${\bf T}'_{f,t}$ is a partition of the graph $\hat \Gamma
_{f,t}$.

We claim that
\begin{align}
\label{sum_ptft} P_{{\bf T}_f} =\sum \limits _{t=t_*'(n)}
^{t_{**}(n)-1} P_{(f,t)}.
\end{align}
Indeed, let $h\in Y_q(\Omega)$. If $x\in \cup
_{t=t'_*(n)}^{t_{**}(n)-1} \cup _{\xi \in {\bf V}_{f,t}} \Omega
_{{\cal D}_{(\xi)}}$, then by (\ref{pft}) we get $P_{{\bf
T}_f}h(x)=\sum \limits _{t=t_*'(n)} ^{t_{**}(n)-1} P_{(f,t)}
h(x)$. In other cases we have $\sum \limits _{t=t_*'(n)}
^{t_{**}(n)-1} P_{(f,t)} h(x)=0$. On the other hand, if $P_{{\bf
T}_f}h(x)\ne 0$, then $x\in \Omega _{{\cal D}_{(\xi)}}$ for some
$\xi \in \hat S$ by (\ref{bftf}) and (\ref{ptf_dop}). From
(\ref{hst_def}), (\ref{hs_def}), (\ref{ptf0}) and (\ref{vft_def})
it follows that $\xi \in {\bf V}_{f,t}$ for some $t\in \{t'_*(n),
\, \dots, \, t_{**}(n)-1\}$. This completes the proof of
(\ref{sum_ptft}).

Let us prove that there exists a sequence $\{k_t\}_{t_*'(n)\le
t<t_{**}(n)}$ such that
$$\sum \limits _{t=t_*'(n)}^{t_{**}(n)-1} (k_t-1)
\underset{\mathfrak{Z}_0}{\lesssim} n,$$
\begin{align}
\label{slttn} \sum \limits _{t=t_*'(n)}^{t_{**}(n)-1}
e_{k_t}(P_{(f,t)} : \hat X_p(\Omega) \rightarrow
Y_q(\Omega))\underset {\mathfrak{Z}_0}{\lesssim} n^{\frac 1q-\frac
1p}.
\end{align}

Let $\xi \in S_t$. If $\xi \in S_t \backslash \overline{\bf W}_t$,
then we set
\begin{align}
\label{tilpft} \tilde {\cal D}_{(\xi)}={\cal D}_{(\xi)}, \quad
\tilde P_{(f,t)}|_{\Omega_{\tilde {\cal D}_{(\xi)}}}
=P_{(f,t)}|_{\Omega_{{\cal D}_{(\xi)}}}
\stackrel{(\ref{ptf}),(\ref{pft})}{=} (P_\xi -
Q_{t_*'(n)})|_{\Omega_{{\cal D}_{(\xi)}}}.
\end{align}
If $\eta \in \overline{\bf W}_t$, then we denote by $\tilde {\cal
D}_{(\eta)}$ the tree with vertex set
\begin{align}
\label{vtilde} {\bf V}(\tilde {\cal D}_{(\eta)})=\{\eta\}\cup
\left(\cup _{\xi \in {\bf V}_1^{\cal A}(\eta)\cap {\bf
V}(\Gamma_{t+1}) \backslash S_{t+1}} {\bf V}({\cal
D}_{(\xi)})\right)
\end{align}
and put
\begin{align}
\label{tilpt} \tilde P_{(f,t)}|_{\Omega_{\tilde{\cal D}_{(\eta)}}}
=(P_\eta-Q_{t_*'(n)})|_{\Omega_{\tilde{\cal D}_{(\eta)}}}.
\end{align}
Observe that by (\ref{vgft}) we have
\begin{align}
\label{tprpr_ft} {\bf T}''_{f,t}:= \{\tilde{\cal D}_{(\xi)}\}_{\xi
\in S_t} \quad\text{is the partition of }\hat \Gamma _{f,t}.
\end{align}
Finally, we set
\begin{align}
\label{dop} \tilde P_{(f,t)}|_{\Omega \backslash \Omega_{(f,t)}}
=0.
\end{align}

Let $$T'_{f,t}=\{\Omega_{{\cal D}}:\; {\cal D}\in {\bf
T}'_{f,t}\}, \quad T''_{f,t}=\{\Omega_{{\cal D}}:\; {\cal D}\in
{\bf T}''_{f,t}\}.$$ By (\ref{ptf}), (\ref{pft}), (\ref{tilpft}),
(\ref{tilpt}), (\ref{dop}) we have
\begin{align}
\label{p_ft_s} P_{(f,t)} \in {\cal S}_{{ T}'_{f,t}}(\Omega), \quad
\tilde P_{(f,t)} \in {\cal S}_{{ T}''_{f,t}}(\Omega).
\end{align}
Moreover, the partition $T'_{f,t}$ refines the partition
$T''_{f,t}$.

Let $h\in B\hat X_p(\Omega)$. Then
$$
\|P_{(f,t)}h-\tilde P_{(f,t)}h\|_{p,q, T'_{f,t}} ^p
\stackrel{(\ref{fpqt}),(\ref{ptf}), (\ref{tpr_ft}),
(\ref{tilpft}), (\ref{vtilde}), (\ref{tilpt})}{=}
$$
$$
= \sum
\limits _{\eta \in \overline{\bf W}_t} \sum \limits _{\xi \in {\bf
V}_1^{\cal A}(\eta) \cap {\bf V}(\Gamma_{t+1})\backslash S_{t+1}}
\| P_\xi h-P_\eta h\|^p_{Y_q(\Omega_{{\cal D}_{(\xi)}})}
\underset{\mathfrak{Z}_0}{\lesssim}
$$
$$
\lesssim \sum \limits _{\eta \in \overline{\bf W}_t} \sum \limits
_{\xi \in {\bf V}_1^{\cal A}(\eta) \cap {\bf V}(\Gamma_{t+1})
\backslash S_{t+1}} (\|h-P_\xi h\|^p_{Y_q(\Omega_{{\cal
D}_{(\xi)}})} +\|h-P_\eta h\|^p_{Y_q(\Omega_{{\cal D}_{(\xi)}})})
\stackrel{(\ref{f_pom_f_cor})}{\underset{\mathfrak{Z}_0}{\lesssim}}
2^{-t \left(1-\frac pq\right)};
$$
i.e.,
\begin{align}
\label{pfth} \|P_{(f,t)}h-\tilde P_{(f,t)}h\|_{p,q, T'_{f,t}}
\underset{\mathfrak{Z}_0}{\lesssim} 2^{-t \left(\frac 1p-\frac
1q\right)}.
\end{align}

We set $k'_t =\lceil n\cdot 2^{-\varepsilon(t-t_*(n))}\rceil$,
where $\varepsilon>0$ is a sufficiently small number (it is chosen
by $\mathfrak{Z}_0$). For any $t\le t_{**}(n)$ we have
$$
{\rm card}\, {\bf T}'_{f,t} \stackrel{(\ref{tpr_ft})}{=} {\rm
card}\, {\bf V}_{f,t} \stackrel{(\ref{c_v1_a}), (\ref{wt_def}),
(\ref{ovr_wt_def}),(\ref{st_def}), (\ref{vft_def})}{\underset
{\mathfrak{Z}_0}{\lesssim}} {\rm card}\, S_t
\stackrel{(\ref{st_def})}{\underset{\mathfrak{Z}_0}{\lesssim}}
{\rm card}\, {\bf T}_{f,t,0} \stackrel {(\ref{nt}),(\ref{sumet}),
(\ref{ctftl})}{\underset{\mathfrak{Z}_0}{\lesssim}} \lceil 2^{-t}n
\rceil
\stackrel{(\ref{2tn_est})}{\underset{\mathfrak{Z}_0}{\asymp}}
2^{-t}n.
$$
This together with (\ref{mult_n}), (\ref{p_ft_s}), (\ref{pfth}),
Theorem \ref{shutt_trm} and Lemma \ref{oper_a} implies that for
some $c_*=c_*(\mathfrak{Z}_0)>0$
\begin{align}
\label{sum_e_p}
\begin{array}{c}
\sum \limits _{t=t_*'(n)}^{t_{**}(n)-1} e_{k'_t}(P_{(f,t)}-\tilde
P_{(f,t)}: \hat X_p(\Omega) \rightarrow Y_q(\Omega))
\underset{\mathfrak{Z}_0} {\lesssim}
\\
\lesssim \sum \limits _{t=t_*(n)}^{t_{**}(n)-1} 2^{-t \left(\frac
1p-\frac 1q\right)} (2^{-t}n)^{\frac 1q-\frac 1p} \cdot 2^{-
c_*\cdot 2^{t-\varepsilon(t-t_*(n))}}
\underset{\mathfrak{Z}_0}{\lesssim}  n^{\frac 1q-\frac 1p}.
\end{array}
\end{align}

We claim that there exists a sequence $\{k''_t\}_{t_*'(n)\le t<
t_{**}(n)}$ such that
$$\sum \limits _{t=t_*'(n)}^{t_{**}(n)-1} (k''_t-1)
\underset{\mathfrak{Z}_0} {\lesssim} n,$$
\begin{align}
\label{sl_e} \sum \limits _{t=t_*'(n)}^{t_{**}(n)-1}
e_{k''_t}(\tilde P_{(f,t)}: \hat X_p(\Omega) \rightarrow
Y_q(\Omega))\underset{\mathfrak{Z}_0} {\lesssim} n^{\frac 1q-\frac
1p}.
\end{align}

For each $t_*'(n)\le t<t_{**}(n)$, $0\le l\le \log n_t$ we
consider the partition ${\bf T}_{f,t,l}$ as defined at Step 1. Let
${\cal D}\in {\bf T}_{f,t,l}$. We set $${\bf V}_{t,{\cal D}}^{*}
=\{\xi \in {\bf V}(\Gamma_t)\cap {\bf V}({\cal D}):\; {\bf
V}_1^{\cal A}(\xi) \cap {\bf V}(\Gamma_{t+1}) \ne \varnothing\}$$
and denote by ${\cal D}_+$ the subtree in ${\cal A}$ with vertex
set
\begin{align}
\label{vdplus} {\bf V}({\cal D}_+)= {\bf V}({\cal D}) \cup
\left(\cup _{\xi \in {\bf V}_{t,{\cal D}}^{*}} \cup _{\xi '\in
{\bf V}_1^{\cal A}(\xi) \cap {\bf V}(\Gamma_{t+1})}{\bf V}({\cal
A}_{\xi'})\right).
\end{align}
Let
\begin{align}
\label{ttftl} \tilde {\bf T}_{f,t,l}=\{{\cal D}_+:\; {\cal D} \in
{\bf T}_{f,t,l}\}.
\end{align}
By (\ref{ctftl}),
\begin{align}
\label{ctiltftl} {\rm card}\, \tilde {\bf T}_{f,t,l}
\underset{\mathfrak{Z}_0}{\lesssim} 2^{-l} n_t.
\end{align}

We claim that for any tree ${\cal D}\in {\bf T}_{f,t,l}$
\begin{align}
\label{cdpr} {\rm card}\, \{{\cal D}'_+:\; {\cal D}'\in {\bf
T}_{f,t,l\pm 1}:\; {\bf V}({\cal D}_+) \cap {\bf V}({\cal D}'_+)
\ne \varnothing\} \underset{\mathfrak{Z}_0}{\lesssim} 1.
\end{align}
Indeed, by (\ref{cabb}), it is sufficient to check that if ${\bf
V}({\cal D}_+) \cap {\bf V}({\cal D}'_+) \ne \varnothing$, then
${\bf V}({\cal D}) \cap {\bf V}({\cal D}') \ne \varnothing$. Let
$\xi \in {\bf V}({\cal D}_+) \cap {\bf V}({\cal D}'_+)$. Then
either $\xi \in {\bf V}(\hat{\cal A}_t)$ (therefore, $\xi \in {\bf
V}({\cal D}) \cap {\bf V}({\cal D}')$) or $\xi \in {\bf V}({\cal
A}_\eta)$, where $\eta \in {\bf V}(\Gamma_t) \cap {\bf V}({\cal
D}) \cap {\bf V}({\cal D}')$, ${\bf V}_1^{\cal A}(\eta) \cap {\bf
V}(\Gamma_{t+1}) \ne \varnothing$.

{\bf Assertion 2.} We have $\tilde {\bf T}_{f,t,0}|_{\hat
\Gamma_{f,t}}={\bf T}''_{f,t}$. If $({\cal T}, \, \hat \xi) \in
{\bf T}_{f,t,0}$, $\hat \xi \notin {\bf V}(\Gamma_t)$, then ${\bf
V}({\cal T}_+) \cap {\bf V}(\hat \Gamma_{f,t}) =\varnothing$.

{\it Proof of Assertion 2.}
\begin{enumerate}
\item Let $({\cal T}, \, \hat \xi) \in {\bf T}_{f,t,0}$.
We claim that either there exists a tree ${\cal D} \in {\bf
T}''_{f,t}$ such that ${\bf V}({\cal T}_+) \cap {\bf V}(\hat
\Gamma_{f,t}) ={\bf V}({\cal D})$ or ${\bf V}({\cal T}_+) \cap
{\bf V}(\hat \Gamma_{f,t}) =\varnothing$.

{\it Case $\hat \xi \in {\bf V}(\Gamma_t)$.} By (\ref{st_def}), we
have $\hat \xi \in S_t$. Let us show that
\begin{align}
\label{vtp} {\bf V}({\cal T}_+) \cap {\bf V}(\hat \Gamma_{f,t})
={\bf V}(\tilde {\cal D}_{(\hat \xi)})
\end{align}
and apply (\ref{tprpr_ft}).

\begin{itemize}
\item We claim that ${\bf V}({\cal T}_+) \cap {\bf V}(\hat \Gamma_{f,t})
\supset {\bf V}(\tilde {\cal D}_{(\hat \xi)})$. Since ${\bf
T}''_{f,t}$ is a partition of the graph $\hat \Gamma_{f,t}$, we
have ${\bf V}(\tilde {\cal D}_{(\hat \xi)}) \subset {\bf V}(\hat
\Gamma_{f,t})$. Let us prove that ${\bf V}(\tilde {\cal D}_{(\hat
\xi)}) \subset {\bf V}({\cal T}_+)$. Indeed, let $\xi \in {\bf
V}(\tilde {\cal D}_{(\hat \xi)}) \backslash {\bf V}({\cal T}_+)$.
We set $\eta_* =\min \{\eta\in [\hat \xi, \, \xi]:\; \eta \notin
{\bf V}({\cal T}_+)\}>\hat \xi$. Denote by $\zeta_*$ the direct
predecessor of $\eta_*$. Then $\zeta_* \notin \cup _{\zeta \in
{\bf V}_{t,{\cal T}}^{*}} \cup _{\zeta' \in {\bf V}_1^{\cal
A}(\zeta) \cap {\bf V}(\Gamma_{t+1})} {\bf V}({\cal A}_{\zeta'})$
(otherwise, $\eta\in {\bf V}({\cal T}_+)$). Hence, $\zeta_* \in
{\bf V}(\Gamma_t)$. If $\eta_*\notin {\bf V}(\Gamma_t)$, then
$\eta_*\in {\bf V}(\Gamma_{t+1})$ (by Assumption \ref{sup1},
condition 1); once again, we get $\eta_* \in {\bf V}({\cal T}_+)$.
Thus, $\eta_*\in {\bf V}(\Gamma_t)$ and $\eta_*$ is the minimal
vertex of some tree $\tilde {\cal T} \in {\bf T}_{f,t,0}$.
Therefore, $\eta_* \stackrel{(\ref{st_def})}{\in} S_t
\stackrel{(\ref{hst_def}), (\ref{hs_def})} {\subset} \hat S$,
which implies $\eta_* \stackrel{(\ref{vdxi}), (\ref{tilpft}),
(\ref{vtilde})}{\notin} {\bf V}(\tilde {\cal D}_{(\hat \xi)})$. On
the other hand, $\eta_*\in [\hat \xi, \, \xi] \subset {\bf
V}(\tilde {\cal D}_{(\hat \xi)})$, which leads to a contradiction.

\item We claim that ${\bf V}({\cal T}_+) \cap {\bf V}(\hat \Gamma_{f,t})
\subset {\bf V}(\tilde {\cal D}_{(\hat \xi)})$. We have ${\bf
V}(\hat \Gamma_{f,t}) \stackrel{(\ref{tprpr_ft})}{=}\cup _{\eta\in
S_t} {\bf V}(\tilde {\cal D}_{(\eta)})$. If $\zeta\in {\bf
V}(\tilde {\cal D}_{(\eta)}) \cap {\bf V}({\cal T}_+)$, then the
vertices $\hat \xi$ and $\eta$ are comparable. Since $\hat \xi \in
S_t \subset \hat S$, the case $\eta<\hat \xi$ is impossible by
(\ref{vdxi}), (\ref{tilpft}), (\ref{vtilde}). Consequently, $\eta
\in [\hat \xi, \, \zeta] \subset {\bf V}({\cal T}_+)$; in
addition, $\eta\in S_t$. By (\ref{st_def}), $\eta \in {\bf
V}(\Gamma_t)$ is the minimal vertex of some tree in ${\bf
T}_{f,t,0}$. Therefore, $\eta=\hat \xi$; i.e., $\zeta\in {\bf
V}(\tilde {\cal D}_{(\hat \xi)})$.
\end{itemize}
This completes the proof of (\ref{vtp}).

{\it Case $\hat \xi\notin {\bf V}(\Gamma_t)$.} Then $\hat \xi \in
{\bf V}(\Gamma_{t'})$, $t'<t$. Let us check that ${\bf V}({\cal
T}_+)\cap {\bf V}(\hat \Gamma_{f,t}) =\varnothing$. Indeed, let
$\xi \in {\bf V}({\cal T}_+)\cap {\bf V}(\hat \Gamma_{f,t})$. Then
$\xi \stackrel{(\ref{tprpr_ft})}{\in} {\bf V}(\tilde {\cal
D}_{(\eta)})$ for some $\eta\in S_t$; i.e., $\eta
\stackrel{(\ref{st_def})}{\in} {\bf V}(\Gamma_t)$ is the minimal
vertex of some tree from ${\bf T}_{f,t,0}$ (this tree does not
coincide with ${\cal T}$ since $\hat \xi \notin {\bf V}(\Gamma_t)$
is the minimal vertex of ${\cal T}$). In addition, the vertices
$\hat \xi$ and $\eta$ are comparable. The case $\eta<\hat \xi$ is
impossible by Assumption \ref{sup1} (see condition 1). Hence, $\eta\in
[\hat \xi, \, \xi] \subset {\bf V}({\cal T}_+)$; i.e., $\eta \in
{\bf V}({\cal T})$, which leads to a contradiction.

\item Let $\tilde{\cal D}_{(\hat \xi)}\in
{\bf T}''_{f,t}$. We claim that there exists a tree ${\cal T} \in
{\bf T}_{f,t,0}$ such that ${\bf V}(\tilde{\cal D}_{(\hat \xi)})
={\bf V}({\cal T}_+) \cap {\bf V}(\hat \Gamma_{f,t})$. Indeed,
since $\hat \xi\in S_t$ by (\ref{tprpr_ft}), we have $\hat \xi \in
{\bf V}(\Gamma_t)$ and $\hat \xi$ is the minimal vertex of some
tree  ${\cal T} \in {\bf T}_{f,t,0}$ (see (\ref{st_def})). By
(\ref{vtp}), ${\bf V}({\cal T}_+) \cap {\bf V}(\hat \Gamma_{f,t})=
{\bf V}(\tilde{\cal D}_{(\eta)})$ for some $\eta\in S_t$. In
addition, $\hat \xi \in {\bf V}({\cal T}_+) \cap {\bf V}(\hat
\Gamma_{f,t})$; i.e., $\hat \xi \in {\bf V}(\tilde{\cal
D}_{(\eta)})$ and $\eta =\hat \xi$.
\end{enumerate}
This completes the proof of Assertion 2. $\diamond$

\smallskip

Let ${\cal D}\in \tilde {\bf T}_{f,t,0}$, $t'_*(n)\le
t<t_{**}(n)$. If the minimal vertex of ${\cal D}$ does not belong
to ${\bf V}(\tilde \Gamma_{t_*'(n)})$, by Assertion 2 we have
${\bf V}({\cal D}) \cap {\bf V}(\hat \Gamma_{f,t}) =\varnothing$.
Hence, if ${\bf V}({\cal D}) \cap {\bf V}(\hat \Gamma_{f,t})
\ne\varnothing$, then $\tilde \Gamma_{t_*'(n)}\cap {\cal D}$ is a
tree.

Denote by $\tilde {\bf T}^n_{f,t,l}$ the partition of $\tilde
\Gamma_{t_*'(n)}$ formed by connected components of graphs $\tilde
\Gamma_{t_*'(n)}\cap {\cal D}$, ${\cal D}\in \tilde {\bf
T}_{f,t,l}$. We have
\begin{align}
\label{tpft} \tilde P_{(f,t)}h =P_{\tilde {\bf T}^n_{f,t,0}}h
\cdot\chi _{\Omega_{(f,t)}}, \quad h\in Y_q(\Omega).
\end{align}
It follows from (\ref{ptf}), (\ref{pft}), (\ref{tilpft}),
(\ref{tilpt}), (\ref{dop}), (\ref{p_ft_s}) and Assertion 2.

Given $0\le l\le \log n_t$, we set
\begin{align}
\label{ktl} \begin{array}{c} k_{t,l}=\lceil n\cdot
2^{-\varepsilon(t-t_*(n)+l)}\rceil, \quad t_*(n)\le t<t_{**}(n),
\\ k''_{t}=1+\sum \limits_{0\le l\le \log n_t} (k_{t,l}-1), \quad t_*'(n)\le t<t_{**}(n).
\end{array}
\end{align}
Then $k''_{t}-1 \underset{\mathfrak{Z}_0}{\lesssim} n\cdot
2^{-\varepsilon(t-t_*(n))}$, $\sum \limits
_{t=t_*'(n)}^{t_{**}(n)-1} (k''_t-1)
\underset{\mathfrak{Z}_0}{\lesssim} n$.

From (\ref{tftfl}), (\ref{vdplus}) and (\ref{ttftl}) it follows
that $\tilde {\bf T}_{f,t,\lfloor \log n_t \rfloor}=\{{\cal A}\}$,
$\tilde {\bf T}^n_{f,t,\lfloor \log n_t \rfloor}=\{{\cal
A}_{t_*'(n),j}\}_{j\in \hat J_{t_*'(n)}}$. From (\ref{qtf_x}) and
(\ref{ptf}) we get $P_{\tilde {\bf T}^n_{f,t,\lfloor \log n_t
\rfloor}}=0$. Hence, by (\ref{tpft}) and (\ref{eklspt}),
\begin{align}
\label{ekppt} \begin{array}{c} e_{k''_t}(\tilde P_{(f,t)}: \hat
X_p(\Omega) \rightarrow Y_q(\Omega)) \le \\ \le \sum \limits
_{0\le l< \lfloor\log n_t\rfloor} e_{k_{t,l}} (P_{\tilde {\bf
T}^n_{f,t,l}}-P_{\tilde {\bf T}^n_{f,t,l+1}}:\hat X_p(\Omega)
\rightarrow Y_q(\Omega_{(f,t)})).
\end{array}
\end{align}

Denote by $\hat {\bf T}^n_{f,t,l}$ the partition formed by trees
${\cal D}'\cap {\cal D}''$, where ${\cal D}'\in \tilde {\bf
T}^n_{f,t,l}$, ${\cal D}''\in \tilde {\bf T}^n_{f,t,l+1}$, and
either ${\cal D}'\in \tilde {\bf T}_{f,t,l}$ or ${\cal D}''\in
\tilde {\bf T}_{f,t,l+1}$. We set
\begin{align}
\label{tiltnf} \tilde T^n_{f,t,l}=\{\Omega_{{\cal D}}\}_{{\cal
D}\in \tilde {\bf T}^n_{f,t,l}}, \quad \hat
T_{f,t,l}^n=\{E=\Omega_{{\cal D}} \cap \Omega_{(f,t)}:\; {\rm
mes}\, E>0\}_{{\cal D}\in \hat {\bf T}^n_{f,t,l}}.
\end{align}

Let $h\in B\hat X_p(\Omega)$. We show that
\begin{align}
\label{ph_chi_oft} (P_{\tilde {\bf T}^n_{f,t,l}}h-P_{\tilde {\bf
T}^n_{f,t,l+1}}h)\chi_{\Omega_{(f,t)}} \in {\cal S}_{\hat
{T}^n_{f,t,l}}(\Omega).
\end{align}
Indeed, if ${\cal D}={\cal D}'\cap {\cal D}''$, ${\cal D}'\in
\tilde {\bf T}^n_{f,t,l}$, ${\cal D}''\in \tilde {\bf
T}^n_{f,t,l+1}$, then $P_{\tilde {\bf
T}^n_{f,t,l}}h|_{\Omega_{{\cal D}'}} \stackrel{(\ref{ptf})}{\in}
{\cal P}(\Omega_{{\cal D}'})$, $P_{\tilde {\bf
T}^n_{f,t,l+1}}h|_{\Omega_{{\cal D}''}}
\stackrel{(\ref{ptf})}{\in} {\cal P}(\Omega_{{\cal D}''})$;
therefore,
\begin{align}
\label{ptil} (P_{\tilde {\bf T}^n_{f,t,l}}h-P_{\tilde {\bf
T}^n_{f,t,l+1}}h)|_{\Omega_{{\cal D}} \cap \Omega_{(f,t)}} \in
{\cal P}(\Omega_{{\cal D}} \cap \Omega_{(f,t)}).
\end{align}
We show that if ${\cal D}' \notin \tilde {\bf T}_{f,t,l}$ and
${\cal D}''\notin \tilde {\bf T}_{f,t,l+1}$, then $(P_{\tilde {\bf
T}^n_{f,t,l}}h-P_{\tilde {\bf T}^n_{f,t,l+1}}h)|_{\Omega_{{\cal
D}} \cap \Omega_{(f,t)}}=0$. Indeed, in this case minimal vertices
of the trees ${\cal D}'$ and ${\cal D}''$ are equal and coincide
with $\hat \xi_{t_*(n),j}$ for some $j\in \hat J_{t_*(n)}$. From
(\ref{ptf}) it follows that $(P_{\tilde {\bf
T}^n_{f,t,l}}h-P_{\tilde {\bf T}^n_{f,t,l+1}}h)|_{\Omega_{{\cal
D}}}=0$. From (\ref{st_omega}) and (\ref{ptil}) we get
(\ref{ph_chi_oft}).

We claim that for any $E'\in \tilde T^n_{f,t,l}$, $E''\in \tilde
T^n_{f,t,l+1}$
\begin{align}
\label{ctl} \begin{array}{c} {\rm card}\, \{E\in \hat
T_{f,t,l}^n:\;  \; E \subset E'\}
\underset{\mathfrak{Z}_0}{\lesssim} 1, \quad {\rm card}\, \{E\in
\hat T_{f,t,l}^n:\;  \; E \subset E''\}
\underset{\mathfrak{Z}_0}{\lesssim} 1. \end{array}
\end{align}
Let us check the first inequality (the second one is proved
similarly). Let $E=\Omega_{{\cal D}'}$, ${\cal D}'\in \tilde {\bf
T}^n_{f,t,l}$, $E\in \hat T_{f,t,l}^n$, $E \subset E'$. Since
$\tilde {\bf T}^n_{f,t,l}$ is a partition, we have
$E=\Omega_{{\cal D}'\cap {\cal D}''} \cap \Omega_{(f, t)}$, ${\cal
D}''\in \tilde {\bf T}^n_{f,t,l+1}$, ${\bf V}({\cal D}') \cap {\bf
V}({\cal D}'') \ne \varnothing$. There exist trees ${\cal T}'\in
\tilde {\bf T}_{f,t,l}$ and ${\cal T}''\in \tilde {\bf
T}_{f,t,l+1}$ such that ${\cal D}'$ and ${\cal D}''$ are connected
components of the graphs ${\cal T}'\cap \tilde \Gamma_{t'_*(n)}$
and ${\cal T}''\cap \tilde \Gamma_{t'_*(n)}$, respectively.
Observe that the connected component of the graph ${\cal T}''\cap
\tilde \Gamma_{t'_*(n)}$ whose vertex set intersects with ${\bf
V}({\cal D}')$ is unique. This together with (\ref{cdpr}) implies
(\ref{ctl}).

From (\ref{ctiltftl}), (\ref{ctl}) and the definition of $\hat
{\bf T}^n_{f,t,l}$ we get that
\begin{align}
\label{card_htnftl} {\rm card}\,\hat T_{f,t,l}^n
\underset{\mathfrak{Z}_0}{\lesssim} 2^{-l} n_t.
\end{align}

For any $h\in B\hat X_p(\Omega)$ we have
$$
\|(P_{\tilde {\bf T}^n_{f,t,l}}h-P_{\tilde {\bf
T}^n_{f,t,l+1}}h)\chi _{\Omega_{(f,t)}}\| _{p,q, \hat T^n_{f,t,l}}
\stackrel{(\ref{fpqt}),
(\ref{ctl})}{\underset{\mathfrak{Z}_0}{\lesssim}}
$$
$$
\lesssim \|(h-Q_{t_*'(n)}h-P_{\tilde {\bf T}^n_{f,t,l}}h)\chi
_{\tilde U_{t_*'(n)}}\|_{p,q,\tilde T^n_{f,t,l}}+
$$
$$
+\|(h-Q_{t_*'(n)}h-P_{\tilde {\bf T}^n_{f,t,l+1}}h)\chi _{\tilde
U_{t_*'(n)}}\|_{p,q,\tilde T^n_{f,t,l+1}} \stackrel{(\ref{fpqt}),
(\ref{ptf}), (\ref{tiltnf})}{=}
$$
$$
\lesssim \left(\sum \limits _{({\cal D}, \, \xi)\in \tilde {\bf
T}^n_{f,t,l}} \|h-P_\xi h\|^p_{Y_q(\Omega_{{\cal
D}})}\right)^{1/p}+\left(\sum \limits _{({\cal D}, \, \xi)\in
\tilde {\bf T}^n_{f,t,l+1}} \|h-P_\xi h\|^p_{Y_q(\Omega_{{\cal
D}})}\right)^{1/p}
\stackrel{(\ref{f_pom_f_cor})}{\underset{\mathfrak{Z}_0}{\lesssim}}
$$
$$
\lesssim 2^{\left(\frac 1q-\frac 1p\right)t_*'(n)}
\underset{\mathfrak{Z}_0}{\lesssim} 2^{\left(\frac 1q-\frac
1p\right)t_*(n)}\stackrel{(\ref{2tn_est})}
{\underset{\mathfrak{Z}_0}{\asymp}} (\log n)^{\frac 1q-\frac 1p};
$$
i.e.,
\begin{align}
\label{ooo} \|(P_{\tilde {\bf T}^n_{f,t,l}}h-P_{\tilde {\bf
T}^n_{f,t,l+1}}h)\chi _{\Omega_{(f,t)}}\| _{p,q, \hat T^n_{f,t,l}}
\underset{\mathfrak{Z}_0}{\lesssim}(\log n)^{\frac 1q-\frac 1p}.
\end{align}

From (\ref{mult_n}), (\ref{ph_chi_oft}), (\ref{card_htnftl}),
(\ref{ooo}) and Lemma \ref{oper_a} we get that
\begin{align}
\label{ektl} \begin{array}{c} e_{k_{t,l}} (P_{\tilde {\bf
T}^n_{f,t,l}}-P_{\tilde {\bf T}^n_{f,t,l+1}}:\hat X_p(\Omega)
\rightarrow Y_q(\Omega_{(f,t)}))
\underset{\mathfrak{Z}_0}{\lesssim}
\\
\lesssim (\log n)^{\frac 1q-\frac
1p}e_{k_{t,l}}(I_{s_{t,l}}:l_p^{s_{t,l}} \rightarrow
l_q^{s_{t,l}})=:A_{t,l},
\end{array}
\end{align}
where $ s_{t,l}\in \N$,
$$
s_{t,l}\le C_*\cdot 2^{-l}n_t
\stackrel{(\ref{nt}),(\ref{sumet})}{\le}C_*\cdot 2^{-l}\lceil
n\cdot 2^{-t}\rceil , \quad C_*=C_*(\mathfrak{Z}_0)\ge 1.
$$

For each $t\le t_{**}(n)$ we have
$$
\frac{s_{t,l}}{k_{t,l}} \stackrel{ (\ref{ktl})} {\le}
\frac{C_*\cdot 2^{-l}\lceil n\cdot 2^{-t}\rceil}{ n\cdot
2^{-\varepsilon(l+t-t_*(n))}}=:\sigma'_{t,l},
$$
\begin{align}
\label{sigtl}
\sigma'_{t,l}\stackrel{(\ref{2tn_est})}{\underset{\mathfrak{Z}_0}{\asymp}}
\frac{2^{-l} n\cdot 2^{-t}}{n\cdot 2^{-\varepsilon(l+t-t_*(n))}}=
2^{-l(1-\varepsilon)-t(1-\varepsilon)-\varepsilon t_*(n)}\le 1.
\end{align}
The sequence $\left\{\sigma'_{t,l}\right\}_{l\in \Z_+}$ decreases
not slower than some geometric progression. This together with
Theorem \ref{shutt_trm} implies that there exists
$\gamma_0=\gamma_0(\mathfrak{Z}_0)>0$ such that
$$
\sum \limits _{t=t_*'(n)}^{t_{**}(n)-1} \sum \limits _{0\le l<
\log n_t} A_{t,l} \stackrel{(\ref{ektl})}{\underset
{\mathfrak{Z}_0}{\lesssim}}(\log
n)^{\frac 1q-\frac 1p} \sum \limits _{t=t_*'(n)}^{t_{**}(n)-1}
\sum \limits _{0\le l< \log n_t} s_{t,l}^{\frac 1q-\frac 1p}
2^{-\frac{k_{t,l}}{s_{t,l}}} {\underset{\mathfrak{Z}_0}{\lesssim}}
$$
$$
\lesssim (\log n)^{\frac 1q-\frac 1p} \sum \limits
_{t=t_*'(n)}^{t_{**}(n)-1} \sum \limits _{0\le l< \log n_t}
k_{t,l}^{\frac 1q-\frac 1p} \cdot (\sigma'_{t,l})^{\frac 1q-\frac
1p} \cdot 2^{-\frac{1}{\sigma'_{t,l}}}
\stackrel{(\ref{ktl}),(\ref{sigtl})}{\underset{\mathfrak{Z}_0}{\lesssim}}
$$
$$
\lesssim (\log n)^{\frac 1q-\frac 1p} \sum \limits
_{t=t_*(n)}^{t_{**}(n)-1} k_{t,0}^{\frac 1q-\frac
1p}\cdot2^{\left(t(1-\varepsilon)+\varepsilon
t_*(n)\right)\left(\frac 1q-\frac 1p\right)} \cdot
2^{-\gamma_0\cdot2^{t(1-\varepsilon)+\varepsilon t_*(n)}}
\stackrel{(\ref{ktl})}{\underset{\mathfrak{Z}_0}{\lesssim}}
$$
$$
\lesssim (\log n)^{\frac 1q-\frac 1p} \cdot n^{\frac 1q-\frac
1p}\cdot 2^{\left(\frac 1q-\frac 1p\right)t_*(n)} \cdot
2^{-\gamma_0\cdot 2^{t_*(n)}}
\stackrel{(\ref{2tn_est})}{\underset{\mathfrak{Z}_0}{\lesssim}}
n^{\frac 1q-\frac 1p};
$$
i.e.,
\begin{align}
\label{sumatl} \sum \limits _{t=t'_*(n)}^{t_{**}(n)-1} \sum
\limits _{0\le l< \log n_t} A_{t,l}
\underset{\mathfrak{Z}_0}{\lesssim} n^{\frac 1q-\frac 1p}.
\end{align}

From (\ref{ekppt}), (\ref{ektl}) and (\ref{sumatl}) we get
(\ref{sl_e}). This together with (\ref{sum_e_p}) yields
(\ref{slttn}). Applying (\ref{sum_ptft}), we have (\ref{enptf}).
Taking into account the estimate obtained at Step 5, we complete
the proof of Lemma \ref{main_lem}.
\end{proof}

It remains to prove that
\begin{align}
\label{enqnm} e_n \left(\sum \limits _{m=0}^\infty (\tilde
Q_{n,m+1} -\tilde Q_{n,m}):\hat X_p(\Omega) \rightarrow
Y_q(\Omega)\right) \underset{\mathfrak{Z}_0}{\lesssim} n^{\frac
1q-\frac 1p}.
\end{align}

In \cite{kuhn5} there were obtained order estimates for entropy
numbers of diagonal operators with weights of logarithmic type.
First we give some notations.

Denote by $\Phi_0$ the class of non-decreasing functions $\varphi:
[1, \, \infty) \rightarrow (0, \, \infty)$ that satisfy the
following condition: there exist $c>0$ and $\alpha>0$ such that
for any $1\le s\le t<\infty$
\begin{align}
\label{phi_ts} \frac{\varphi(t)}{\varphi(s)} \le
c\left(\frac{1+\log t}{1+\log s}\right)^\alpha.
\end{align}
We set $w_\varphi(s)=1$ for $0\le s\le 1$ and
$w_\varphi(s)=\varphi(s)$ for $s>1$.

Let $\delta>0$, $w:\R_+ \rightarrow \R_+$ be a continuous
function, and let $0<r, \, p\le \infty$. For $x=(x_{m,k})_{m,k\in
\Z_+}$ we set
$$
\|x|l_r(2^{\delta m}l_p(w))\|:=\left(\sum \limits _{m=0}^\infty
2^{m\delta r}\left(\sum \limits _{k\in \Z_+}
|x_{m,k}w(2^{-m}k)|^p\right)^{\frac rp}\right)^{\frac 1r}
$$
(appropriately modified if $p=\infty$ or $r=\infty$). By
$l_r(2^{\delta m}l_p(w))$ we denote the space of sequences $x$
such that $\|x|l_r(2^{\delta m}l_p(w))\|<\infty$.

\begin{trma}
\label{kuhn_seq} {\rm \cite[p. 11]{kuhn5}.} Let $0<p<q\le \infty$,
$0<r, \, s\le \infty$, $\delta>0$, $\varphi\in \Phi_0$, and let
(\ref{phi_ts}) hold with $\alpha =\frac 1p-\frac 1q$. Then
$$
e_n({\rm id}: l_r(2^{\delta m}l_p(w_\varphi)) \rightarrow
l_s(l_q)) \underset{p,q,r,s,\varphi}{\asymp}
\frac{1}{\varphi(2^n)}.
$$
\end{trma}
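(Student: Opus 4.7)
\textbf{Proof proposal for Theorem \ref{kuhn_seq}.}

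My plan is to treat the diagonal operator between the weighted vector-valued sequence spaces by block-diagonal decomposition, apply Schütt's theorem (Theorem \ref{shutt_trm}) on each block, and use the logarithmic growth of $\varphi$ encoded in (\ref{phi_ts}) to sum the block contributions to the correct asymptotic $1/\varphi(2^n)$.

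\textbf{Reduction and dyadic block structure.} I would first observe that, because $\delta>0$, the outer index $m$ contributes a geometric weight $2^{m\delta}$, so source elements have small $l_p$-norm on level $m$ for large $m$; after fixing a truncation level $M(n)$ depending on $n$, the tail contribution $m>M(n)$ can be handled by the trivial bound coming from $\|{\rm id}\|$ restricted to that tail, which decays geometrically in $n$ and is absorbed. For the head $m\le M(n)$, on each level $m$ I split the inner coordinates $k\in\Z_+$ into dyadic blocks $B_{m,j}=[2^{m+j},2^{m+j+1})$, $j\ge 0$, plus the initial block $[0,2^m)$. The weight $w_\varphi(2^{-m}k)$ equals $1$ on the initial block and, by monotonicity of $\varphi$ together with (\ref{phi_ts}), is comparable to $\varphi(2^j)$ on $B_{m,j}$. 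Thus the restriction of ${\rm id}$ to block $B_{m,j}$ reduces, up to a constant, to the scalar multiple $\varphi(2^j)^{-1}$ times the embedding $I_{N_{m,j}}:l_p^{N_{m,j}}\to l_q^{N_{m,j}}$ with $N_{m,j}=2^{m+j}$.

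\textbf{Block entropy and allocation.} To each block $(m,j)$ I allocate an entropy budget $k_{m,j}\in\N$ with $\sum_{m,j}(k_{m,j}-1)\lesssim n$ and apply Theorem \ref{shutt_trm} to get
$$
e_{k_{m,j}}\!\left(I_{N_{m,j}}:l_p^{N_{m,j}}\to l_q^{N_{m,j}}\right)\lesssim \min\!\left\{1,\;\bigl(\tfrac{\log(1+N_{m,j}/k_{m,j})}{k_{m,j}}\bigr)^{\!\alpha},\;2^{-k_{m,j}/N_{m,j}}N_{m,j}^{-\alpha}\right\},
$$
with $\alpha=1/p-1/q$. Using subadditivity (\ref{eklspt}) across blocks together with the $l_r\to l_s$ outer structure (handled via the triangle inequality in $l_{\min(r,s)}$ and the geometric factor $2^{-m\delta}$), the total entropy is bounded by the sum of $\varphi(2^j)^{-1}e_{k_{m,j}}(I_{N_{m,j}})$. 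I would then choose $k_{m,j}$ so that the inner Schütt bound on block $(m,j)$ is of order $\varphi(2^{k_{m,j}})^{-1}$; concretely, $k_{m,j}\approx n\cdot 2^{-c(m+j)}$ for suitable $c>0$. Because (\ref{phi_ts}) holds with the critical exponent $\alpha=1/p-1/q$, this is the exact exponent at which the geometric factors from Schütt's theorem and the logarithmic growth of $\varphi$ balance, and the resulting sum over $(m,j)$ telescopes into $C/\varphi(2^n)$.

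\textbf{Lower bound.} For the matching lower bound, I would isolate the single block corresponding to $m=0$ and $j$ chosen so that $N_{0,j}=2^j\asymp n$ (or $j$ such that the relevant Schütt regime applies). The restriction of ${\rm id}$ to this block is, up to a constant, $\varphi(2^j)^{-1}\cdot I_{N_{0,j}}$, so by Theorem \ref{shutt_trm}
$$
e_n({\rm id})\gtrsim \varphi(2^j)^{-1}\,e_n\!\left(I_{2^j}:l_p^{2^j}\to l_q^{2^j}\right)\gtrsim \varphi(2^n)^{-1},
$$
choosing $j\asymp n$ and using (\ref{phi_ts}) once more to pass from $\varphi(2^j)$ to $\varphi(2^n)$ up to constants.

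\textbf{Main obstacle.} The routine parts are the dyadic decomposition and the application of Schütt's bound on individual blocks; the technical core is the allocation of the $k_{m,j}$ and the proof that the summed bound is exactly of order $\varphi(2^n)^{-1}$ rather than off by a log power. This is precisely where the critical hypothesis $\alpha=1/p-1/q$ in (\ref{phi_ts}) is used: it makes the geometric decay coming from Schütt's theorem match the logarithmic growth of $\varphi$, and any weaker exponent would produce an extra polylog factor. Ensuring that this matching survives the sum over both $m$ and $j$, including the interaction with the outer weights $2^{m\delta}$ and the $l_r\leftrightarrow l_s$ mixing, is where I expect the delicate bookkeeping to lie.
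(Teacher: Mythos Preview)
The paper does not contain its own proof of Theorem~\ref{kuhn_seq}; the result is quoted verbatim from K\"uhn--Leopold--Sickel--Skrzypczak \cite[p.~11]{kuhn5} and used as a black box in the argument following (\ref{enqnm}). So there is nothing in the present paper to compare your proposal against.

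That said, your sketch is broadly in line with how the result is proved in \cite{kuhn5} and the surrounding literature: dyadic block decomposition of the inner index, reduction on each block to a finite-dimensional identity $l_p^N\to l_q^N$ via Sch\"utt's theorem, and an entropy-budget allocation across blocks. The point you correctly identify as delicate---that the critical exponent $\alpha=\tfrac{1}{p}-\tfrac{1}{q}$ in (\ref{phi_ts}) is exactly what makes the Sch\"utt bound and the growth of $\varphi$ balance with no leftover logarithmic factor---is indeed the crux of the original argument. One caution: your handling of the outer $l_r\to l_s$ structure via ``the triangle inequality in $l_{\min(r,s)}$'' is a bit loose, since $r$ and $s$ are completely unconstrained (possibly $<1$) and the operator is not block-diagonal in $m$ in any way that immediately gives a clean $l_{\min(r,s)}$ estimate; in the original proof this step goes through a more careful interpolation or a direct estimate exploiting the geometric weight $2^{m\delta}$. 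But the overall architecture of your proposal is the right one.
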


Let us prove (\ref{enqnm}).

Given $m\ge m_t$, we denote by $\hat T_{t,m}$ the partition of the
set $G_t$ formed by $E'\cap E''$, $E'\in \tilde T_{t,m}$, $E''\in
\tilde T_{t,m+1}$ (the partitions $\tilde T_{t,m}$ are defined at
page \pageref{tttm}). Let $s''_{t,m}=\dim {\cal S}_{\hat
T_{t,m}}(\Omega)$. Then there exists $M=M(\mathfrak{Z}_0)\ge 1$
such that
\begin{align}
\label{spptm} s''_{t,m} \stackrel{(\ref{nu_t_k1}), (\ref{mt_def}),
(\ref{cttm}), (\ref{card_et})}{\le} M\cdot 2^{m-m_t}\cdot
2^{\gamma_*2^t} \psi_*(2^{2^t}).
\end{align}
Given $t\ge 0$, $m'\ge 0$, we set
$$
\hat s_{t,m'} =\lceil M\cdot 2^{m'}\cdot 2^{\gamma_*2^t}
\psi_*(2^{2^t})\rceil, \quad s^*_{t,m'} =\sum \limits _{l=0}^{t-1}
\hat s_{l,m'}.
$$

Denote $\varphi(x)=\left(\log (2+x)\right)^{\frac 1p-\frac 1q}$,
$x\ge 0$. Let $k=s^*_{t,m'}+j$, $1\le j\le \hat s_{t,m'}$. Then
$$
2^{\gamma_*2^{t-1}} \psi_*(2^{2^{t-1}}) \underset{\mathfrak{Z}_0}
{\lesssim} 2^{-m'}k \underset{\mathfrak{Z}_0}{\lesssim}
2^{\gamma_*2^t} \psi_*(2^{2^t}), \quad t\ge 1,
$$
$2^{-m'}k\underset{\mathfrak{Z}_0}{\lesssim} 1$ if $t=0$. Hence,
\begin{align}
\label{phi_est} w_{\varphi}(2^{-m'}k)
\stackrel{(\ref{te})}{\underset{\mathfrak{Z}_0}{\asymp}}
2^{\left(\frac 1p-\frac 1q\right)t}, \quad k=s^*_{t,m'}+j, \quad
1\le j\le \hat s_{t,m'}.
\end{align}

From Lemma \ref{oper_a} it follows that there is an isomorphism
$\overline{A}_{t,m}:{\cal S}_{\hat T_{t,m}}(\Omega) \rightarrow
\R^{s''_{t,m}}$ such that
\begin{align}
\label{mtmn} \|\overline{A}_{t,m}\|_{Y_{p,q,\hat
T_{t,m}}(G_t)\rightarrow l_p^{s''_{t,m}}}
\underset{\mathfrak{Z}_0}{\lesssim} 1, \quad
\|\overline{A}_{t,m}^{\, -1}\|_{l_q^{s''_{t,m}}\rightarrow
Y_q(G_t)}\underset{\mathfrak{Z}_0}{\lesssim} 1.
\end{align}

Let us define the operator $\overline{A}:\hat X_p(\Omega)
\rightarrow l_\infty (2^{\delta_*m'} l_p(w_\varphi))$ as follows.
Consider a function $f\in \hat X_p(\Omega)$. Then $(\tilde
Q_{n,m'+1}f-\tilde Q_{n,m'}f)|_{G_t}\stackrel {(\ref{ptm_yq}),
(\ref{qnm_def})}{\in} {\cal S}_{\hat T_{t,m_t+m'}}(\Omega)$. Let
$$
\overline{A}_{t,m_t+m'}((\tilde Q_{n,m'+1}f-\tilde
Q_{n,m'}f)|_{G_t}) =(c_{m',t,j})_{j=1}^{s''_{t,m_t+m'}}.
$$
We set
$$
(\overline{A}f)_{m',s^*_{t,m'}+j} =\left\{
\begin{array}{l} c_{m',t,j} \quad \text{for} \quad 1\le j\le s''_{t,m_t+m'},\; t\ge t_0, \\
0 \quad \text{for}\quad s''_{t,m_t+m'}+1\le j\le \hat
s_{t,m'}\text{ or }t<t_0.\end{array} \right.
$$
Then
$$
\|\overline{A}f\|_{l_\infty (2^{\delta_*m'} l_p(w_\varphi))} =
\sup _{m'\ge 0} 2^{\delta_* m'} \left( \sum \limits _{k\in \Z_+}
|w_\varphi(2^{-m'}k)(\overline{A}f)_{m',k} |^p\right)^{\frac 1p}
\stackrel{(\ref{phi_est})}{\underset{\mathfrak{Z}_0}{\lesssim}}
$$
$$
\lesssim \sup _{m'\ge 0} 2^{\delta_*m'} \left( \sum \limits _{t\ge
t_0} 2^{\left(1-\frac pq\right)t}\sum \limits
_{j=1}^{s''_{t,m_t+m'}} |c_{m',t,j}|^p\right)^{1/p}=: N.
$$
From (\ref{fpttm1}), (\ref{fpttm2}) and (\ref{card_et}) it follows
that for $m\ge m_t$
$$
\|P_{t,m+1}-P_{t,m}\|_{\hat X_p(G_t) \rightarrow Y_{p,q,\hat
T_{t,m}}(G_t)} \underset{\mathfrak{Z}_0}{\lesssim}
2^{-\delta_*(m-m_t)} \cdot 2^{\left(\frac 1q-\frac 1p\right)t}
\|f\|_{X_p(G_t)}.
$$
Hence, by (\ref{qnm_def}), (\ref{mtmn}) we get $$\sum \limits
_{j=1}^{s''_{t,m_t+m'}} |c_{m',t,j}|^p
\underset{\mathfrak{Z}_0}{\lesssim} 2^{-\delta_*pm'}
2^{\left(\frac pq-1\right)t} \|f\|^p_{X_p(G_t)};$$ therefore,
$$
N \underset{\mathfrak{Z}_0}{\lesssim} \left(\sum \limits _{t\ge
t_0} \|f\|^p_{X_p(G_t)}\right)^{1/p} =\|f\|_{X_p(\Omega)}.
$$
Thus,
\begin{align}
\label{a_norm} \|\overline{A}\|_{\hat X_p(\Omega) \rightarrow
l_\infty (2^{\delta_*m'} l_p(w_\varphi))}
\underset{\mathfrak{Z}_0}{\lesssim} 1.
\end{align}

Let us define the operator $\overline{K}: l_1(l_q) \rightarrow
Y_q(\Omega)$ by formula
$$
\overline{K}((c_{m',s^*_{t,m'}+j})_{m'\in \Z_+,t\in \Z_+, 1\le
j\le \hat s _{t,m'}}) =\sum \limits _{m'\in \Z_+ } \sum \limits
_{t\ge t_0} \overline{A}^{\, -1}_{t,m_t+m'}
((c_{m',s^*_{t,m'}+j})_{j=1}^{s''_{t,m_t+m'}}).
$$
Since the sets $G_t$ do not overlap pairwise, we have
$$
\|\overline{K}((c_{m',s^*_{t,m'}+j})_{m'\in \Z_+,t\in \Z_+, 1\le
j\le \hat s _{t,m'}})\| _{Y_q(\Omega)} \le
$$
$$
\le \sum \limits _{m'=0}^\infty \left( \sum \limits
_{t=t_0}^\infty \|\overline{A}^{\, -1}_{t,m_t+m'}
((c_{m',s^*_{t,m'}+j})_{j=1}^{s''_{t,m_t+m'}})\|^q_{Y_q(G_t)}\right)^{1/q}
\stackrel{(\ref{mtmn})}{\underset{\mathfrak{Z}_0}{\lesssim}}
$$
$$
\lesssim \sum \limits _{m'=0}^\infty \left(\sum \limits _{t\in
\Z_+} \sum \limits _{j=1}^{\hat s_{t,m'}}
|c_{m',s^*_{t,m'}+j}|^q\right)^{1/q} =
\|(c_{m',s^*_{t,m'}+j})_{m'\in \Z_+,t\in \Z_+, 1\le j\le \hat s
_{t,m'}}\|_{l_1(l_q)}.
$$
Hence,
\begin{align}
\label{kn} \|\overline{K}\|_{l_1(l_q) \rightarrow Y_q(\Omega)}
\underset{\mathfrak{Z}_0}{\lesssim} 1.
\end{align}

Let ${\rm id}: l_\infty(2^{\delta_* m'}l_p(w_\varphi)) \rightarrow
l_1(l_q)$ be the identity operator. Then
$$
e_n \left(\sum \limits _{m=0}^\infty (\tilde Q_{n,m+1} -\tilde
Q_{n,m}):\hat X_p(\Omega) \rightarrow Y_q(\Omega)\right)=
$$
$$
=e_n({\overline{K}}\circ {\rm id} \circ \overline{A}:\hat
X_p(\Omega) \rightarrow Y_q(\Omega))
\stackrel{(\ref{mult_n}),(\ref{a_norm}),(\ref{kn})}{\underset{\mathfrak{Z}_0}{\lesssim}}
$$
$$
\lesssim e_n({\rm id}: l_\infty(2^{\delta_* m'}l_p(w_\varphi))
\rightarrow l_1(l_q))=:E_n.
$$
From Theorem \ref{kuhn_seq} we get $E_n
\underset{\mathfrak{Z}_0}{\lesssim} n^{\frac 1q-\frac 1p}$.

Applying Lemmas \ref{sum_qt_est}, \ref{ptm_sum}, \ref{main_lem}
together with (\ref{f_razl}), (\ref{fqf}), (\ref{fmqtsnf}),
(\ref{enqnm}), we complete the proof of Theorem \ref{main_tree}.

\begin{Rem}
\label{discr_rem0} Suppose that instead of Assumption \ref{sup2}
the following condition holds: for any $\xi\in {\bf V}({\cal A})$
the set $\hat F(\xi)$ is the atom of ${\rm mes}$. Then the
assertion of Theorem \ref{main_tree} holds as well.
\end{Rem}

\renewcommand{\proofname}{\bf Proof}

\section{Some particular cases}

Let $({\cal T}, \xi_0)$ be a tree, and let $u$, $w:{\bf V}({\cal
T}) \rightarrow \R_+$. We define the summation operator
$S_{u,w,{\cal T}}$ by
$$
S_{u,w,{\cal T}}f(\xi) = w(\xi)\sum \limits _{\xi'\le \xi}
u(\xi')f(\xi'), \quad \xi \in {\bf V}({\cal T}), \quad f:{\bf
V}({\cal T}) \rightarrow \R.
$$
Let $1\le p, \, q\le \infty$. By
\label{obozn_spqguw}$\mathfrak{S}^{p,q}_{{\cal T},u,w}$ we denote
the minimal constant $C$ in the inequality
$$
\left(\sum \limits_{\xi \in {\bf V}({\cal T})} w^q(\xi) \left|
\sum \limits _{\xi'\le \xi}u(\xi')f(\xi')\right|^q\right)^{1/q}
\le C\left(\sum \limits_{\xi \in {\bf V}({\cal T})}
|f(\xi)|^p\right)^{1/p}, \;\; f:{\bf V}({\cal T})\rightarrow \R
$$
(appropriately modified for $p=\infty$ or $q=\infty$).

Given a tree $({\cal T}, \xi_0)$, $N\in \Z_+$, we denote by
$[{\cal T}]_{\le N}$ a tree with vertex set $\cup_{j=0}^N {\bf
V}_j^{{\cal T}}(\xi_0)$.

Let $({\cal A}, \, \xi_0)$ be a tree, let (\ref{c_v1_a}) hold, and
let the measure space $(\Omega, \, \Sigma, \, {\rm mes})$, the
partition $\Theta$, the bijection $\hat F:{\bf V}({\cal A})
\rightarrow \hat \Theta$ and the spaces $X_p(\Omega)$,
$Y_q(\Omega)$, ${\cal P}(\Omega)$ be as defined at the page
\pageref{xpyq}. Assumptions \ref{sup1}--\ref{sup3} will be
replaced by the following conditions.
\begin{Supp}
\label{supp1} There exist functions $u$, $w: {\bf V}({\cal A})
\rightarrow (0, \, \infty)$ and a constant $c_2\ge 1$ with the
following property: for any vertex $\xi_*\in {\bf V}({\cal A})$
there exists a linear continuous projection $P_{\xi_*}:
Y_q(\Omega)\rightarrow {\cal P}(\Omega)$ such that for any vertex
$\xi\ge \xi_*$ and for any function $f\in X_p(\Omega)$
\begin{align}
\label{hfxi} \|f-P_{\xi_*}f\|_{Y_q(\hat F(\xi))} \le c_2 w(\xi)
\sum \limits _{\xi_*\le \xi'\le \xi} u(\xi') \|f\|_{X_p(\hat
F(\xi'))}.
\end{align}
\end{Supp}

\begin{Supp}
\label{supp2} There exists a number $\delta_*>0$ such that for any
vertex $\xi\in {\bf V}({\cal A})$ and for any $n\in \N$, $m\in
\Z_+$ there exists a partition $T_{m,n}(G)$ of the set $G=\hat
F(\xi)$ with the following properties:
\begin{enumerate}
\item ${\rm card}\, T_{m,n}(G)\le c_2\cdot 2^mn$.
\item For any $E\in T_{m,n}(G)$ there exists a linear continuous
operator $P_E:Y_q(\Omega)\rightarrow {\cal P}(E)$ such that for
any function $f\in X_p(\Omega)$
\begin{align}
\label{fpef1} \|f-P_Ef\|_{Y_q(E)}\le c_2(2^mn)^{-\delta_*}
u(\xi)w(\xi) \|f\| _{X_p(E)}.
\end{align}
\item For any $E\in T_{m,n}(G)$
\begin{align}
\label{ceptm1} {\rm card}\,\{E'\in T_{m\pm 1,n}(G):\, {\rm
mes}(E\cap E') >0\} \le c_2.
\end{align}
\end{enumerate}
\end{Supp}

Let ${\bf V}({\cal A})=\{\eta_{j,i}\}_{j\ge j_{\min}, \, i\in
\tilde I_j}$, where $j_{\min}\ge 0$, $\tilde I_{j_{\min}}=\{1\}$.
Suppose that $\eta_{j_{\min},1}$ is the minimal vertex of ${\cal
A}$ and ${\bf V}^{{\cal
A}}_{j-j_{\min}}(\eta_{j_{\min},1})=\{\eta_{j,i}\}_{i\in \tilde
I_j}$ for any $j\ge j_{\min}$.

\begin{Supp}
\label{supp3} There exist numbers $\theta>0$, $\gamma\in \R$,
$\kappa\ge \frac{\theta}{q}$, $\alpha_u$, $\alpha_w\in \R$,
$c_3\ge 1$, $m_*\in \N$ and an absolutely continuous function
$\tau:(0, \, \infty) \rightarrow (0, \, \infty)$ such that $\lim
\limits _{t\to +\infty} \frac{t\tau'(t)}{\tau(t)}=0$ and the
following conditions hold.
\begin{enumerate}
\item If $\kappa=\frac{\theta}{q}$, then $\alpha_w>\frac{1-\gamma}{q}$.
\item If $\kappa> \frac{\theta}{q}$, then $\alpha_u+\alpha_w=
\frac 1p-\frac 1q$; if $\kappa= \frac{\theta}{q}$, then
$\alpha_u+\alpha_w=\frac 1p$.
\item For any $j'\ge j\ge j_{\min}$ and for any vertex $\xi \in {\bf V}
_{j-j_{\min}}^{{\cal A}}(\eta_{j_{\min},1})$
\begin{align}
\label{card_jj} {\rm card}\, {\bf V}_{j'-j}^{{\cal A}}(\xi) \le
c_3 \cdot 2^{\theta m_*(j'-j)} \frac{j^\gamma
\tau(m_*j)}{j'^\gamma \tau(m_*j')}.
\end{align}
\item For any $j\ge j_{\min}$, $i\in \tilde I_j$
\begin{align}
\label{uetaji} u(\eta_{j,i}) =u_j= 2^{\kappa
m_*j}(m_*j)^{-\alpha_u},\quad w(\eta_{j,i}) =w_j=2^{-\kappa m_*j}
(m_*j)^{-\alpha_w}.
\end{align}
\item Let $\kappa=\frac{\theta}{q}$. Then there exists a tree $\hat {\cal
A}$ with the minimal vertex $\hat \zeta_0$, which satisfies the
following conditions.
\begin{enumerate}
\item For any $j'\ge j\ge j_{\min}$ and for any vertex $\xi \in {\bf V}
_{j-j_{\min}}^{\hat {\cal A}}(\hat\zeta_0)$
\begin{align}
\label{card_jj1} c_3^{-1} \cdot 2^{\theta m_*(j'-j)}
\frac{j^\gamma \tau(m_*j)}{j'^\gamma \tau(m_*j')}\le {\rm card}\,
{\bf V}_{j'-j}^{\hat{\cal A}}(\xi) \le c_3 \cdot 2^{\theta
m_*(j'-j)} \frac{j^\gamma \tau(m_*j)}{j'^\gamma \tau(m_*j')}.
\end{align}
\item Let $\{\overline{u}_j\}_{j\ge j_{\min}}\subset (0, \,
\infty)$, $\{\overline{w}_j\}_{j\ge j_{\min}}\subset (0, \,
\infty)$ be arbitrary sequences. Define the functions $\tilde u$,
$\tilde w:{\bf V}({\cal A}) \rightarrow (0, \, \infty)$, $\hat u$,
$\hat w: {\bf V}(\hat{\cal A}) \rightarrow (0, \, \infty)$ by
$$
\tilde u|_{{\bf V}^{\cal A}_{j-j_{\min}}(\eta_{j_{\min},1})}
\equiv \overline{u}_j, \quad \hat u|_{{\bf V}^{\hat{\cal
A}}_{j-j_{\min}}(\zeta_0)} \equiv \overline{u}_j,
$$
$$
\tilde w|_{{\bf V}^{\cal A}_{j-j_{\min}}(\eta_{j_{\min},1})}
\equiv \overline{w}_j, \quad \hat w|_{{\bf V}^{\hat{\cal
A}}_{j-j_{\min}}(\zeta_0)} \equiv \overline{w}_j, \quad j\ge
j_{\min}.
$$
Then for each $N\ge j\ge j_{\min}$, $i\in \tilde I_j$ there exists
a vertex $\hat \xi\in {\bf V}^{\hat{\cal A}}_{j-j_{\min}}(\hat
\zeta_0)$ such that $$\mathfrak{S}^{p,q}_{[{\cal
A}_{\eta_{j,i}}]_{\le N-j},\tilde u,\tilde w} \le c_3
\mathfrak{S}^{p,q}_{[\hat{\cal A}_{\hat \xi}]_{\le N-j},\hat
u,\hat w}.$$
\end{enumerate}
\end{enumerate}
\end{Supp}

Denote $\mathfrak{Z}_0=(p, \, q, \, c_1, \, c_2, \, c_3, \,
\theta, \, \gamma, \, \kappa, \, \alpha_u, \, \alpha _w, \, m_*,
\, \delta_*, \, \tau)$.

Let $\xi_*\in {\bf V}({\cal A})$, and let ${\cal D} \subset {\cal
A}$ be a subtree with the minimal vertex $\xi_*$. Then from
Assumption \ref{supp1} it follows that
\begin{align}
\label{fpxif_spq} \|f-P_{\xi_*}f\|_{Y_q(\Omega_{\cal D})} \le c_2
\mathfrak{S}^{p,q} _{{\cal D},u,w} \|f\|_{X_p(\Omega_{{\cal D}})}.
\end{align}
From Theorem F in \cite{vas_width_raspr}, Theorem 3.6 in
\cite{vas_har_tree} and Assumption \ref{supp3} it follows that if
$\xi_*\in {\bf V}^{{\cal A}}_{j-j_{\min}}(\xi_0)$, then in the
case $\kappa
>\frac{\theta}{q}$
\begin{align}
\label{spq_duw} \mathfrak{S}^{p,q} _{{\cal D},u,w}
\underset{\mathfrak{Z}_0}{\lesssim} \sup _{s\ge j} u_sw_s
\underset{\mathfrak{Z}_0}{\asymp} (m_*j)^{-\alpha_u-\alpha_w}
=(m_*j)^{\frac 1q-\frac 1p},
\end{align}
and in the case $\kappa=\frac{\theta}{q}$
\begin{align}
\label{spq_duw1} \displaystyle \begin{array}{c} \mathfrak{S}^{p,q}
_{{\cal D},u,w} \underset{\mathfrak{Z}_0}{\lesssim} \sup _{s\ge j}
\left(\sum \limits _{i=j}^s (m_*i)^{-p'\alpha_u} \cdot 2^{p'\kappa
m_*i}\right)^{\frac{1}{p'}} \left(\sum \limits _{i\ge s}
(m_*i)^{-\alpha_wq}\cdot\frac{s^\gamma \tau(m_*s)}{i^\gamma
\tau(m_*i)} \cdot 2^{-q\kappa
m_*s}\right)^{\frac 1q} \underset{\mathfrak{Z}_0}{\lesssim} \\
\lesssim (m_*j)^{-\alpha_u-\alpha_w+\frac 1q} =(m_*j)^{\frac
1q-\frac 1p}.
\end{array}
\end{align}
Notice that the proof of (\ref{spq_duw1}) depends on condition 5
of Assumption \ref{supp3}.

From Assumption \ref{supp2} and conditions 2, 4 of Assumption
\ref{supp3} it follows that if $\xi \in {\bf V}_{j-j_{\min}}^{\cal
A}(\xi_0)$, $G=\hat F(\xi)$, $E\in T_{m,n}(G)$, then
\begin{align}
\label{fpefyq} \begin{array}{c} \|f-P_Ef\|_{Y_q(E)} \le
c_2\cdot(2^mn)^{-\delta_*} (m_*j)^{-\alpha_u-\alpha_w} \|f\|
_{X_p(E)} \le \\ \le c_2\cdot (2^mn)^{-\delta_*} (m_*j)^{\frac
1q-\frac 1p} \|f\| _{X_p(E)}. \end{array}
\end{align}

We construct the partition $\{{\cal A}_{t,i}\}_{t\ge t_0, \, i\in
\hat J_t}$ as follows. Let $$t_0=\min \{t\in \Z_+:\; 2^{t}> m_*
j_{\min}\}.$$ Given $t\ge t_0$, we denote by $\Gamma_t$ the
maximal subgraph in ${\cal A}$ on the vertex set
\begin{align}
\label{v_of_gamma_t} {\bf V}(\Gamma_t)=\{\eta_{j,s}:\; 2^{t-1}\le
m_*j<2^{t}, \; s\in \tilde I_j\},
\end{align}
and by ${\cal A}_{t,i}$, $i\in \hat J_t$, the connected components
of the graph $\Gamma_t$. By $\hat \xi_{t,i}$ we denote the minimal
vertex of the tree ${\cal A}_{t,i}$. Then
\begin{align}
\label{c_vgt} {\rm card}\, {\bf V}(\Gamma_t)
\stackrel{(\ref{card_jj})}{\underset{\mathfrak{Z}_0}{\lesssim}}
2^{\theta \cdot 2^t}2^{-\gamma t} \tau^{-1}(2^t).
\end{align}
Thus, Assumption \ref{sup3} holds with $\overline{\nu}_t
=2^{\theta \cdot 2^t}2^{-\gamma t} \tau^{-1}(2^t)$. From
(\ref{fpefyq}) we obtain that Assumption \ref{sup2} holds.

Recall the notations $J_{t,{\cal D}}=\{i\in \hat J_t:\; {\bf
V}({\cal A}_{t,i})\cap {\bf V}({\cal D}) \ne \varnothing\}$,
${\cal D}_{t,i} ={\cal D}\cap {\cal A}_{t,i}$, where ${\cal
D}\subset {\cal A}$ is a subtree.

In order to obtain Assumption \ref{sup1}, it is sufficient to
prove the following assertion.
\begin{Lem}
\label{lem_est} Let ${\cal D}$ be a subtree in ${\cal A}$ rooted
at $\xi_*$. Then
\begin{align}
\label{fpxifd} \|f-P_{\xi_*}f\|_{Y_q(\Omega_{\cal D})}^q
\underset{\mathfrak{Z}_0}{\lesssim} \sum \limits _{t=t_0}
^{\infty} 2^{\left(1-\frac qp\right) t} \sum \limits _{i\in J_{t,
{\cal D}}} \|f\|_{X_p(\Omega _{{\cal D}_{t,i}})} ^q.
\end{align}
\end{Lem}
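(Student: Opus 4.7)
\textbf{Proof plan for Lemma \ref{lem_est}.} The starting point is to convert the target $Y_q$-norm into a sum over single vertices via property (\ref{f_yq}):
$$
\|f-P_{\xi_*}f\|_{Y_q(\Omega_{\cal D})}^q = \sum_{\xi\in{\bf V}({\cal D})}\|f-P_{\xi_*}f\|_{Y_q(\hat F(\xi))}^q,
$$
and then apply the pointwise Hardy-type bound (\ref{hfxi}) with $a(\xi'):=\|f\|_{X_p(\hat F(\xi'))}$. This reduces the lemma to a purely combinatorial weighted-Hardy estimate on the tree ${\cal D}$:
$$
\sum_{\xi\in {\bf V}({\cal D})} w(\xi)^q \Bigl(\sum_{\xi_*\le\xi'\le\xi} u(\xi')\,a(\xi')\Bigr)^q \underset{\mathfrak{Z}_0}{\lesssim}\sum_{t\ge t_0} 2^{(1-q/p)t}\sum_{i\in J_{t,{\cal D}}}\Bigl(\sum_{\xi'\in{\bf V}({\cal D}_{t,i})} a(\xi')^p\Bigr)^{q/p}.
$$

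To get a sum over the partition pieces on the right-hand side, I would group the outer sum by $(t,i)$ and, for each $\xi\in {\bf V}({\cal D}_{t,i})$, split the inner Hardy sum at the minimal vertex $\eta_{t,i}$ of ${\cal D}_{t,i}$:
$$
\sum_{\xi_*\le\xi'\le\xi} u(\xi')\,a(\xi')=A_{t,i}+\sum_{\eta_{t,i}\le\xi'\le\xi}u(\xi')\,a(\xi'),\qquad A_{t,i}:=\sum_{\xi_*\le\xi'<\eta_{t,i}}u(\xi')\,a(\xi').
$$
Using $(x+y)^q\lesssim_q x^q+y^q$, the LHS splits into an \emph{internal} contribution on ${\cal D}_{t,i}$ and an \emph{external} one. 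The internal contribution is precisely $(\mathfrak{S}^{p,q}_{{\cal D}_{t,i},u,w})^q\|a|_{{\cal D}_{t,i}}\|_{l_p}^q$, which by (\ref{spq_duw}) (in the case $\kappa>\theta/q$) or (\ref{spq_duw1}) (in the case $\kappa=\theta/q$, using condition 5 of Assumption \ref{supp3}) is $\underset{\mathfrak{Z}_0}{\lesssim} 2^{t(1-q/p)}\|f\|_{X_p(\Omega_{{\cal D}_{t,i}})}^q$, which is exactly the term appearing in the target estimate.

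The main obstacle is the external contribution $\sum_{t,i} A_{t,i}^q\,W_{t,i}^q$, where $W_{t,i}^q:=\sum_{\xi\in{\bf V}({\cal D}_{t,i})}w(\xi)^q$. My plan to handle it is to contract ${\cal D}$ to a ``coarse'' tree $\bar{\cal D}$ whose vertices are the indices $(s,j)$ with $j\in J_{s,{\cal D}}$, endowed with weights
$$
\bar u(s,j):=\Bigl(\sum_{\xi\in{\bf V}({\cal D}_{s,j})}u(\xi)^{p'}\Bigr)^{1/p'},\qquad \bar w(t,i):=W_{t,i},\qquad \tilde a(s,j):=\|f\|_{X_p(\Omega_{{\cal D}_{s,j}})}.
$$
Hölder's inequality applied piece by piece gives $A_{t,i}\le \sum_{(s,j)<(t,i)}\bar u(s,j)\,\tilde a(s,j)$, so the external contribution is controlled by the Hardy operator on $\bar{\cal D}$ evaluated at $\tilde a$, and the claim reduces to $\mathfrak{S}^{p,q}_{\bar{\cal D},\bar u,\bar w}\underset{\mathfrak{Z}_0}{\lesssim}1$ (absorbing the $2^{(1-q/p)t}$ factor into $\bar w$).

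The remaining step is to verify this coarse Hardy estimate. I would compute $\bar u(s,j)$ and $\bar w(t,i)$ from the explicit formulas (\ref{uetaji}) together with the cardinality bound (\ref{card_jj}) for ${\rm card}\,{\bf V}({\cal D}_{s,j})$, and then apply the same machinery (Theorem F of \cite{vas_width_raspr} and Theorem 3.6 of \cite{vas_har_tree}) that yielded (\ref{spq_duw})--(\ref{spq_duw1}). The hardest accounting will be in the critical case $\kappa=\theta/q$, where the logarithmic factor in $\tau$ and the sharp condition $\alpha_w>(1-\gamma)/q$ must be tracked; this is why condition 5 of Assumption \ref{supp3} (which provides the comparison tree $\hat{\cal A}$ with matching cardinality bounds from \emph{both} sides) is indispensable. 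Once the coarse Hardy constant is shown to be $O(1)$, summing the internal and external contributions yields (\ref{fpxifd}).
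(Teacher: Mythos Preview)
Your overall architecture---reduce to a tree Hardy inequality via (\ref{hfxi}), split each path at the piece boundaries $\eta_{t,i}$, and treat internal and external contributions separately---matches the paper's. The internal part is fine: it is exactly the use of (\ref{spq_duw})/(\ref{spq_duw1}) that the paper also makes (and the paper likewise needs condition~5 of Assumption~\ref{supp3} for (\ref{spq_duw1})).

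The external part, however, has a real gap. Your coarse weight
\[
\bar u(s,j)=\Bigl(\sum_{\xi\in{\bf V}({\cal D}_{s,j})}u(\xi)^{p'}\Bigr)^{1/p'}
\]
sums $u^{p'}$ over \emph{all} vertices of the piece, not just the chain that the path $[\xi_*,\eta_{t,i}]$ actually traverses. Since ${\cal D}_{s,j}$ branches (with $\asymp 2^{\theta m_*(l-2^{s-1}/m_*)}$ vertices at level $l$ by (\ref{card_jj})) while $u_l^{p'}\asymp 2^{\kappa p' m_* l}$ grows, one computes
\[
\bar u(s,j)\ \asymp\ 2^{\kappa\,2^{s}+\theta\,2^{s-1}/p'}\,2^{-\alpha_u s},
\]
i.e.\ an extra doubly-exponential factor $2^{\theta\,2^{s-1}/p'}$ over the path-only $l_{p'}$-sum. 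With $\bar w(t,i)=W_{t,i}\asymp 2^{-\kappa\,2^{t-1}}2^{-\alpha_w t}$ (case $\kappa>\theta/q$, and similarly in the critical case), the product $\bar u(s,j)\bar w(t,i)$ blows up already for $s=t$, so the coarse Hardy constant $\mathfrak{S}^{p,q}_{\bar{\cal D},\bar u,\bar w}$ cannot be $O(1)$. The H\"older step is of course a valid inequality, but it throws away too much.

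The paper avoids this by building the coarse tree on the \emph{maximal} vertices $\zeta_{t,i}$ of ${\cal D}\cap\Gamma_t$ having a successor in ${\cal D}$ (see (\ref{zeta_ti})), not on the piece indices $(t,j)$; the associated ``data'' is the path-normalized quantity
\[
\varphi(\zeta_{t,i})=\sum_{\xi'\le\zeta_{t,i},\ \xi'\in{\bf V}(\Gamma_t)\cap{\bf V}({\cal D})}\frac{u(\xi')}{u(\zeta_{t,i})}\,\|f\|_{X_p(\hat F(\xi'))},
\]
so that the coarse $\hat u(\zeta_{t,i})\asymp u(\zeta_{t,i})$ is just the top value of $u$ on the chain, with no branching factor. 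After a H\"older/geometric argument on this coarse tree (the $\hat u=\hat u_1\hat u_2$ trick), one is left with $\sum_{i:\,\zeta_{t,i}\in{\bf V}({\cal D}_{t,j})}\varphi^q(\zeta_{t,i})$, and only \emph{here}---to convert back to $\|f\|_{X_p(\Omega_{{\cal D}_{t,j}})}^q$---does one need a second Hardy-type bound on ${\cal D}_{t,j}$ with the weight $w_{(\sigma)}$ supported on maximal vertices; this is precisely where condition~5 of Assumption~\ref{supp3} (the comparison tree $\hat{\cal A}$ with two-sided bounds (\ref{card_jj1})) is invoked. If you want to salvage your scheme, replace $\bar u(s,j)$ by the path $l_{p'}$-sum; but then $\bar u$ depends on the terminal piece $(t,i)$, and untangling that dependence leads you back to the paper's $\zeta_{t,i}$-indexed construction.
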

\begin{proof}
By (\ref{hfxi}),
$$
\|f-P_{\xi_*}f\|^q_{Y_q(\Omega_{\cal D})} \underset
{\mathfrak{Z}_0} {\lesssim} \sum \limits _{\xi \in {\bf V}({\cal
D})} w^q(\xi) \left(\sum \limits _{\xi_*\le \xi'\le \xi} u(\xi')
\|f\|_{X_p(\hat F(\xi'))}\right)^q.
$$
Let $\kappa >\frac{\theta}{q}$. Then $\alpha_u+\alpha_w=\frac
1p-\frac 1q$ (see condition 2 of Assumption \ref{supp3}).
Repeating the proof of Lemma 5.1 in \cite{vas_vl_raspr2}, we get
that
$$
\|f-P_{\xi_*}f\|^q_{Y_q(\Omega_{\cal D})} \underset
{\mathfrak{Z}_0} {\lesssim} \sum \limits _{\xi \in {\bf V}({\cal
D})} w^q(\xi) u^q(\xi) \|f\|_{X_p(\hat F(\xi))}^q.
$$
If $\xi=\eta_{j,i}$, $2^{t-1}\le m_*j<2^t$, then $u^q(\xi)w^q(\xi)
\stackrel{(\ref{uetaji})}{=} (m_*j)^{-\frac qp+1}
\underset{\mathfrak{Z}_0}{\asymp} 2^{\left(1-\frac qp\right) t}$.
This together with the condition $p<q$ implies (\ref{fpxifd}).

Now we consider the case $\kappa=\frac{\theta}{q}$.

Let $\xi_*\in {\bf V} ({{\cal A}}_{\hat t,j_0})$, and let $\xi \in
{\bf V}({\cal D}_{\hat t+l,j_l})$. Then there exists a sequence
$\{\hat \xi_{\hat t+s,j_s}\}_{s=1}^l$ such that $\xi_*<\hat
\xi_{\hat t+1,j_{1}}< \hat \xi_{\hat t+2,j_{2}}< \dots < \hat
\xi_{\hat t+l,j_{l}}$. Denote
\begin{align}
\label{txi_def} \tilde \xi_{\hat t,j_0}=\xi_*, \quad \tilde
\xi_{\hat t+s,j_s} =\hat \xi_{\hat t+s,j_s}, 1\le s\le l.
\end{align}
We have
\begin{align}
\label{ufxi_sum} \sum \limits _{\xi_*\le \xi'\le \xi} u(\xi')
f(\xi') = \sum \limits _{s=0}^{l-1} \sum \limits _{\tilde
\xi_{\hat t+s,j_s} \le \xi' < \tilde \xi_{\hat t+s+1,j_{s+1}}}
u(\xi') \|f\|_{X_p(\hat F(\xi'))} + \sum \limits _{\tilde
\xi_{\hat t+l,j_l}\le \xi'\le \xi} u(\xi') \|f\|_{X_p(\hat
F(\xi'))}.
\end{align}

Let $\tilde \alpha_w=\alpha_w-\frac {1}{q}$. By condition 2 of
Assumption \ref{supp3},
\begin{align}
\label{auaw} \alpha_u+\tilde \alpha_w=\frac 1p-\frac 1q.
\end{align}

We have
$$
\sum \limits _{\xi \in {\bf V}({\cal D}_{\hat t+l, \, j_l})}
w^q(\xi) \stackrel{(\ref{card_jj}), (\ref{uetaji}),
(\ref{v_of_gamma_t})}{\underset{\mathfrak{Z}_0}{\lesssim}}$$$$\lesssim
\sum \limits _{2^{\hat t+l-1} \le m_*j< 2^{\hat t+l}} 2^{-\theta
m_*j} (m_*j)^{-\alpha_w q} \cdot 2^{\theta (m_*j-2^{\hat t+l-1})}
\left(\frac{2^{\hat t+l-1}}{m_*j}\right)^{\gamma}
\frac{\tau(2^{\hat t+l-1})}{\tau(m_*j)}
\stackrel{(\ref{te})}{\underset{\mathfrak{Z}_0}{\lesssim}}
$$
$$
\lesssim 2^{-\kappa q\cdot 2^{\hat t+l-1}} \cdot 2^{(\hat t+l)
(-\alpha_w q+1)}=2^{-\kappa q\cdot 2^{\hat t+l-1}} \cdot 2^{-(\hat
t+l) \tilde\alpha_w q},
$$
which implies
$$
\|f-P_{\xi_*}f\| _{Y_q(\Omega_{{\cal D}_{\hat t+l,j_l}})}^q
\stackrel{(\ref{hfxi})}{\underset{\mathfrak{Z}_0}{\lesssim}} \sum
\limits _{\xi \in {\bf V}({\cal D}_{\hat t+l,j_l})} w^q(\xi)
\left(\sum \limits _{\xi_*\le \xi'\le \xi} u(\xi')
f(\xi')\right)^q \stackrel{(\ref{ufxi_sum})}
{\underset{\mathfrak{Z}_0}{\lesssim}}
$$
$$
\lesssim \sum \limits _{\xi \in {\bf V}({\cal D}_{\hat t+l,j_l})}
w^q(\xi) \left(\sum \limits _{s=0}^{l-1}  \sum \limits _{\tilde
\xi_{\hat t+s,j_s}\le \xi'<\tilde \xi_{\hat t+s+1,j_{s+1}}}
u(\xi') \|f\| _{X_p(\hat F(\xi'))} \right)^q +
$$
$$
+\sum \limits _{\xi \in {\bf V}({\cal D}_{\hat t+l,j_l})} w^q(\xi)
\left(\sum \limits _{\tilde \xi_{\hat t+l,j_l}\le \xi'\le \xi}
u(\xi') \|f\|_{X_p(\hat F(\xi'))}\right)^q
\stackrel{(\ref{spq_duw1}),
(\ref{v_of_gamma_t})}{\underset{\mathfrak{Z}_0}{\lesssim}}
$$
$$
\lesssim 2^{-q\kappa \cdot 2^{\hat t+l-1}} \cdot 2^{-q\tilde
\alpha_w(\hat t+l)}\left(\sum \limits _{s=0}^{l-1} \sum \limits
_{\tilde \xi_{\hat t+s,j_s}\le \xi'<\tilde \xi_{\hat
t+s+1,j_{s+1}}} u(\xi') \|f\| _{X_p(\hat F(\xi'))} \right)^q
+$$$$+ 2^{\left(1-\frac qp\right)(\hat t+l)} \|f\|^q_{X_p({\cal
D}_{\hat t+l,j_l})};
$$
i.e.,
\begin{align}
\label{fmpf} \begin{array}{c} \|f-P_{\xi_*}f\| _{Y_q(\Omega_{{\cal
D}_{\hat t+l,j_l}})}^q \underset{\mathfrak{Z}_0}{\lesssim}
2^{\left(1-\frac qp\right)(\hat t+l)} \|f\|^q_{X_p({\cal D}_{\hat
t+l,j_l})}+\\ + 2^{-q\kappa \cdot 2^{\hat t+l-1}} \cdot
2^{-q\tilde \alpha_w(\hat t+l)}\left(\sum \limits _{s=0}^{l-1}
\sum \limits _{\tilde \xi_{\hat t+s,j_s}\le \xi'<\tilde \xi_{\hat
t+s+1,j_{s+1}}} u(\xi') \|f\| _{X_p(\hat F(\xi'))} \right)^q.
\end{array}
\end{align}

Denote
\begin{align}
\label{zeta_ti} \{\zeta_{t,i}:\, i\in I'_t\}=\{\zeta\in {\bf
V}_{\max}({\cal D}\cap \Gamma_t):\; {\bf V}_1^{{\cal D}}(\zeta)
\ne \varnothing\}.
\end{align}
By (\ref{v_of_gamma_t}),
\begin{align}
\label{zti_eta} \forall i\in I'_t\;\; \exists s\in \tilde
I_{\lceil 2^t/m_*\rceil-1}:\quad \zeta_{t,i} = \eta_{\lceil
2^t/m_*\rceil-1,s}.
\end{align}

Let us define the tree $\hat {\cal D}$ with vertex set $\{\xi_*\} \cup
\left(\cup _{t\ge \hat t} \{\zeta_{t,i}:\; i\in I'_t\}\right)$.
The partial order on ${\bf V}(\hat {\cal D})$ is defined as
follows. We set
$${\bf V}_1^{\hat{\cal D}}(\zeta_{t,i}) =\{\zeta_{t+1,j}:\;
\zeta_{t+1,j}\in {\cal D}_{\zeta_{t,i}}\}.$$ If $\xi_* \notin
\{\zeta_{\hat t,i}:\; i\in I'_{\hat t}\}$, then we set ${\bf
V}_1^{\hat{\cal D}}(\xi_*)=\{\zeta_{\hat t,i}:\; i\in I'_{\hat
t}\}$.

Further, we define the functions $\varphi$, $\hat u$, $\hat w:{\bf
V}(\hat{\cal D}) \rightarrow \R_+$. We set
\begin{align}
\label{phi_zti} \varphi(\zeta_{t,i}) =\sum _{\xi'\in {\bf
V}(\Gamma_t)\cap {\bf V}({\cal D}), \, \xi'\le \zeta_{t,i}}
\frac{u(\xi')}{u(\zeta_{t,i})} \|f\|_{X_p(\hat F(\xi'))},
\end{align}
\begin{align}
\label{wzti} \hat w(\zeta_{t,i}) = 2^{-\kappa \cdot 2^t}\cdot
2^{-\tilde \alpha_w t}, \quad \hat u(\zeta_{t,i}) =2^{\kappa \cdot
2^t} \cdot 2^{-\alpha_u t} \stackrel{(\ref{uetaji}),
(\ref{zti_eta})} {\underset{\mathfrak{Z}_0}{\asymp}}
u(\zeta_{t,i}).
\end{align}
If $\xi_* \notin \{\zeta_{\hat t,i}:\; i\in I'_{\hat t}\}$, then
we set $\varphi(\xi_*) =\hat w(\xi_*)=\hat u(\xi_*)=0$.

Let $t\ge \hat t$, $j\in J_{t,{\cal D}}$. If $t>\hat t$, then
$\hat \xi_{t,j}\in {\bf V}({\cal D})$. We
take $i\in I'_{t-1}$ such that $\hat \xi_{t,j}\in {\bf V}_1^{{\cal
D}}(\zeta_{t-1,i})$. Then
\begin{align}
\label{est_fpf_tj} \|f-P_{\xi_*}f\|^q_{Y_q({\cal D}_{t,j})}
\stackrel{(\ref{fmpf}), (\ref{wzti})}{\underset{\mathfrak{Z}_0}
{\lesssim}} 2^{\left(1-\frac qp\right) t} \|f\|^q_{X_p({\cal
D}_{t,j})} + \hat w^q(\zeta_{t-1,i}) \left( \sum \limits
_{\zeta'\in {\bf V}(\hat{\cal D}), \, \zeta'\le \zeta_{t-1,i}}
\hat u(\zeta')\varphi(\zeta')\right)^q.
\end{align}
Notice that ${\rm card}\, {\bf V}_1^{{\cal D}} (\zeta_{t,i})
\stackrel{(\ref{card_jj})}{\underset{\mathfrak{Z}_0}{\lesssim}}
1$. Summing (\ref{est_fpf_tj}) over all $t\ge \hat t$, $j\in
J_{t,{\cal D}}$, we obtain
\begin{align}
\label{est_fpf_d}
\begin{array}{c}
\|f-P_{\xi_*}f\|_{Y_q(\Omega_{{\cal D}})}^q
\underset{\mathfrak{Z}_0} {\lesssim} \sum \limits _{t=\hat
t}^\infty 2^{\left(1-\frac qp\right) t}\sum \limits _{j\in
J_{t,{\cal D}}} \|f\|^q_{X_p(\Omega_{{\cal D}_{t,j}})}+
\\
+\sum \limits _{t=\hat t}^{\infty} \sum \limits _{i\in I'_t} \hat
w^q(\zeta_{t,i}) \left( \sum \limits _{\zeta'\in {\bf V}(\hat{\cal
D}), \, \zeta'\le \zeta_{t,i}} \hat
u(\zeta')\varphi(\zeta')\right)^q.
\end{array}
\end{align}

Let us estimate the second summand (we denote it by $L$).

Let $\hat u(\zeta)=\hat u_1(\zeta)\hat u_2(\zeta)$, $\hat
u_1(\zeta_{t,i}) =2^{\varepsilon t}$, where $\varepsilon
=\varepsilon(\mathfrak{Z}_0)>0$ is sufficiently small (it will be
chosen later); if $\xi_* \notin \{\zeta_{\hat t,i}:\; i\in
I'_{\hat t}\}$, then we set $\hat u_1(\xi_*)=\hat u_2(\xi_*)=0$.
By H\"{o}lder's inequality,
$$
L=\sum \limits _{\zeta \in {\bf V}(\hat {\cal D})} \hat w^q(\zeta)
\left(\sum \limits _{\zeta'\le \zeta} \hat u_1(\zeta') \hat
u_2(\zeta') \varphi(\zeta')\right)^q\le
$$
$$
\le \sum \limits _{\zeta \in {\bf V}(\hat {\cal D})} \hat
w^q(\zeta) \left(\sum \limits _{\zeta'\le \zeta} \hat
u_1^{q'}(\zeta')\right)^{\frac{q}{q'}} \sum \limits_{\zeta' \le
\zeta} \hat u_2^q(\zeta') \varphi^q(\zeta')
\underset{\mathfrak{Z}_0}{\lesssim}
$$
$$
\lesssim \sum \limits _{\zeta \in {\bf V}(\hat {\cal D})} \hat
w^q(\zeta) \hat u_1^q(\zeta)\sum \limits_{\zeta' \le \zeta} \hat
u_2^q(\zeta') \varphi^q(\zeta')= \sum \limits _{\zeta' \in {\bf
V}(\hat{\cal D})} \hat u_2^q(\zeta') \varphi^q(\zeta') \sum
\limits _{\zeta \ge \zeta'} \hat w^q(\zeta) \hat u_1^q(\zeta)=:M.
$$
By (\ref{zti_eta}), there exists $s\in \tilde I_{\lceil
2^t/m_*\rceil-1}$ such that
$$
{\rm card}\, {\bf V}_l^{\hat{\cal D}}(\zeta_{t,i}) ={\rm card}\,
\{i'\in I'_{t+l}:\; \zeta_{t+l,i'}\ge \zeta_{t,i}\}\le
$$
$$
\le{\rm card}\, {\bf V}^{\cal A}_{\lceil 2^{t+l}/m_*\rceil-\lceil
2^t/m_*\rceil}(\eta_{\lceil 2^t/m_*\rceil-1,s})
\stackrel{(\ref{card_jj})}{\underset{\mathfrak{Z}_0}{\lesssim}}
\frac{2^{\theta \cdot 2^{t+l}}2^{-\gamma(t+l)}
\tau^{-1}(2^{t+l})}{2^{\theta \cdot 2^t}2^{-\gamma
t}\tau^{-1}(2^t)}.
$$
This together with relations $\kappa =\frac{\theta}{q}$,
$\alpha_w> \frac{1-\gamma}{q}$ (see condition 1 of Assumption
\ref{supp3}) yields that for sufficiently smalll $\varepsilon>0$
$$
\sum \limits _{\zeta\ge \zeta_{t,i}} \hat w^q(\zeta) \hat
u_1^q(\zeta)
\stackrel{(\ref{wzti})}{\underset{\mathfrak{Z}_0}{\lesssim}} \sum
\limits _{l\ge 0} 2^{-q\kappa \cdot 2^{t+l}}\cdot 2^{-q\tilde
\alpha_w (t+l)} \cdot 2^{q\varepsilon (t+l)}\frac{2^{\theta \cdot
2^{t+l}}2^{-\gamma(t+l)}\tau^{-1}(2^{t+l})}{2^{\theta \cdot
2^t}2^{-\gamma t}\tau^{-1}(2^{t})}=
$$
$$
=2^{-\theta \cdot2^t}\cdot 2^{\gamma t} \sum \limits _{l\ge
0}2^{-q\left(\tilde \alpha_w+\frac{\gamma}{q}-\varepsilon\right)
(t+l)}\frac{\tau(2^t)}{\tau(2^{t+l})}
\stackrel{(\ref{te})}{\underset{\mathfrak{Z}_0}{\lesssim}}
2^{-q\kappa \cdot 2^t} \cdot
2^{-q(\tilde\alpha_w-\varepsilon) t} \stackrel{(\ref{wzti})}{=}
\hat w^q(\zeta_{t,i}) \hat u_1^q(\zeta_{t,i}).
$$
Thus,
$$
M \underset{\mathfrak{Z}_0}{\lesssim} \sum \limits _{\zeta' \in
{\bf V}(\hat{\cal D})} \hat w^q(\zeta')\hat u_1^q(\zeta')\hat
u_2^q(\zeta') \varphi^q(\zeta')=\sum \limits _{\zeta' \in {\bf
V}(\hat{\cal D})} \hat w^q(\zeta')\hat
u^q(\zeta')\varphi^q(\zeta') \stackrel{(\ref{auaw}),
(\ref{wzti})}{=}
$$
$$
=\sum \limits _{t=\hat t}^\infty \sum \limits _{j\in J_{t,{\cal
D}}} 2^{\left(1-\frac qp\right)t} \sum \limits _{i\in I'_t:\;
\zeta _{t,i} \in {\bf V}({\cal D}_{t,j})} \varphi^q(\zeta_{t,i});
$$
i.e.,
\begin{align}
\label{est_sum} \sum \limits _{t=\hat t}^{\infty} \sum \limits
_{i\in I'_t} \hat w^q(\zeta_{t,i}) \left( \sum \limits _{\zeta'\in
{\bf V}(\hat{\cal D}), \, \zeta'\le \zeta_{t,i}} \hat
u(\zeta')\varphi(\zeta')\right)^q
\underset{\mathfrak{Z}_0}{\lesssim} \sum \limits _{t=\hat
t}^\infty \sum \limits _{j\in  J_{t,{\cal D}}} 2^{\left(1-\frac
qp\right)t} \sum \limits _{i\in I'_t:\; \zeta _{t,i} \in {\bf
V}({\cal D}_{t,j})} \varphi^q(\zeta_{t,i}).
\end{align}

It remains to prove that for any $t\ge \hat t$, $j\in J_{t,{\cal
D}}$
\begin{align}
\label{est1} \sum \limits _{i\in I'_t:\; \zeta _{t,i} \in {\bf
V}({\cal D}_{t,j})} \varphi^q(\zeta_{t,i})
\underset{\mathfrak{Z}_0}{\lesssim} \|f\|^q_{X_p(\Omega _{{\cal
D}_{t,j}})}.
\end{align}
Then (\ref{est_fpf_d}), (\ref{est_sum}), (\ref{est1}) imply
(\ref{fpxifd}).

By (\ref{txi_def}) and (\ref{phi_zti}),
$$
\sum \limits _{i\in I'_t:\; \zeta _{t,i} \in {\bf V}({\cal
D}_{t,j})} \varphi^q(\zeta_{t,i}) = \sum \limits _{i\in I'_t:\;
\zeta _{t,i} \in {\bf V}({\cal D}_{t,j})} \left(\sum \limits
_{\tilde \xi_{t,j}\le \xi'\le \zeta_{t,i}} \frac{u(\xi')}
{u(\zeta_{t,i})} \|f\|_{X_p(\hat F(\xi'))}\right)^q=
$$
$$
=\sum \limits _{i\in I'_t:\; \zeta _{t,i} \in {\bf V}({\cal
D}_{t,j})} u^{-q}(\zeta_{t,i})\left(\sum \limits _{\tilde
\xi_{t,j}\le \xi'\le \zeta_{t,i}} u(\xi') \|f\|_{X_p(\hat
F(\xi'))}\right)^q=:S.
$$

From (\ref{zeta_ti}) it follows that $\zeta_{t,i}\in {\bf
V}_{\max}({\cal A}_{t,j})$.

We denote ${\cal A}'_{t,j}=({\cal A}_{t,j})_{\tilde \xi_{t,j}}$.
Then ${\bf V}({\cal D}_{t,j}) \subset {\bf V}({\cal A}'_{t,j})$.
Given $\sigma\ge 0$, we set
$$
w_{(\sigma)}(\xi) = \left\{
\begin{array}{l} u^{-1}(\xi) \quad \text{for}\quad \xi \in {\bf
V}_{\max}({\cal A}'_{t,j}),
\\ \sigma \quad \text{for}\quad \xi \in {\bf V}({\cal A}'_{t,j}) \backslash
{\bf V}_{\max}({\cal A}'_{t,j}).\end{array} \right.
$$
Then
$$
S\le \left[\mathfrak{S}^{p,q}_{{\cal A}'_{t,j}, u,
w_{(\sigma)}}\right]^q \|f\|_{X_p(\Omega_{{\cal D}_{t,j}})}.
$$
We claim that for sufficiently small $\sigma>0$ the estimate
$\mathfrak{S}^{p,q}_{{\cal A}'_{t,j}, u, w_{(\sigma)}}
\underset{\mathfrak{Z}_0}{\lesssim} 1$ holds. This implies
(\ref{est1}).

Let $\tilde \xi_{t,j} \in {\bf V}^{{\cal
A}}_{s_*-j_{\min}}(\eta_{j_{\min},1})$. From (\ref{v_of_gamma_t})
and (\ref{txi_def}) it follows that
\begin{align}
\label{s_star} 2^{t-1}\le m_*s_*<2^t.
\end{align}
Moreover, ${\cal A}'_{t,j}=[{\cal A}_{\tilde \xi_{t,j}}]_{\le
\lceil 2^t/m_*\rceil-1-s_*}$ (since $\hat\xi_{t+1,i}\in {\bf
V}^{\cal A}_{\lceil 2^t/m_*\rceil-j_{\min}}(\eta_{j_{\min},1})$,
$i\in \hat J_t$). By condition 5b of Assumption \ref{supp3}, there
exists a vertex $\hat \zeta \in {\bf V}^{\hat{\cal
A}}_{s_*-j_{\min}}(\hat \zeta_0)$ such that for the tree $\hat
{\cal A}_{t,j}:=[\hat{\cal A}_{\hat \zeta}]_{\le \lceil
2^t/m_*\rceil-1-s_*}$ and for the functions $\overline{u}$,
$\overline{w}_{(\sigma)}:{\bf V}(\hat {\cal A}_{t,j}) \rightarrow
(0, \, \infty)$ defined by
\begin{align}
\label{w_sigma} \overline{u}(\xi) =u_s, \quad \xi \in {\bf
V}^{\hat{\cal A}}_{s-s_*}(\hat \zeta_0), \quad
\overline{w}(\xi)=\left\{
\begin{array}{l} u^{-1}_{\lceil 2^t/m_*\rceil-1}, \quad \xi \in {\bf V}^{\hat{\cal
A}}_{\lceil 2^t/m_*\rceil-1-s_*}(\hat \zeta_0), \\ \sigma, \quad
\xi \in {\bf V}^{\hat{\cal A}}_{s-s_*}(\hat \zeta_0), \quad
s<\lceil 2^t/m_*\rceil-1,\end{array}\right.
\end{align}
the inequality $\mathfrak{S}^{p,q}_{{\cal A}'_{t,j}, u,
w_{(\sigma)}} \underset{\mathfrak{Z}_0}{\lesssim}
\mathfrak{S}^{p,q}_{\hat{\cal A}_{t,j}, \overline{u},
\overline{w}_{(\sigma)}}$ holds.

From (\ref{card_jj1}) it follows that we can apply
Theorem 3.6 in \cite{vas_har_tree} and estimate
$\mathfrak{S}^{p,q}_{\hat{\cal A}_{t,j}, \overline{u},
\overline{w}_{(\sigma)}}$ from above.
For sufficiently small $\sigma=\sigma(\mathfrak{Z}_0,
\, t)>0$ we get
$$
\mathfrak{S}^{p,q}_{\hat{\cal A}_{t,j}, \overline{u},
\overline{w}_{(\sigma)}} \stackrel{(\ref{card_jj1}),
(\ref{w_sigma})}{\underset{\mathfrak{Z}_0}{\lesssim}}
 \sup _{s_*\le s< \lceil 2^t/m_*\rceil} \left(\sum \limits
_{l=s_*}^{s} u_l^{p'}\right)^{1/p'} \times$$$$\times \left(\sum
\limits _{l=s}^{\lceil 2^t/m_*\rceil-2} \sigma^q \cdot 2^{\theta
m_*(l-s)}\frac{s^\gamma \tau(m_*s)}{l^\gamma \tau(m_*l)}+
u_{\lceil2^t/m_*\rceil-1}^{-q} \cdot
2^{\theta(2^t-m_*s)}\frac{(m_*s)^\gamma \tau(m_*s)}{2^{\gamma t}
\tau(2^t)}\right)^{1/q}
\stackrel{(\ref{uetaji}),(\ref{s_star})}{\underset{\mathfrak{Z}_0}{\lesssim}}
$$
$$
\lesssim \sup _{s_*\le s\le \lceil 2^t/m_*\rceil-1} \left(\sum
\limits _{l=s_*}^{s} 2^{p'\kappa m_*l} \cdot (m_*l)^{-\alpha_u
p'}\right)^{1/p'} \left(2^{-q\kappa \cdot 2^t} \cdot
2^{q\alpha_ut} \cdot 2^{\theta(2^t-m_*s)}\right)^{1/q}.
$$
Since $\theta=q\kappa$, we have
$$
\mathfrak{S}^{p,q}_{\hat{\cal A}_{t,j}, \overline{u},
\overline{w}_{(\sigma)}} \underset{\mathfrak{Z}_0}{\lesssim} \sup
_{s_*\le s\le \lceil 2^t/m_*\rceil-1} 2^{\kappa m_*s} \cdot
(m_*s)^{-\alpha_u} \cdot 2^{-\kappa m_*s} \cdot 2^{\alpha_u t}
\stackrel{(\ref{s_star})}{\underset{\mathfrak{Z}_0}{\lesssim}} 1.
$$
This completes the proof.
\end{proof}

Thus, Assumptions \ref{sup1}, \ref{sup2}, \ref{sup3} hold, and by
Theorem \ref{main_tree} we get (\ref{main_est}).

Now we suppose that instead of Assumption \ref{supp3} the
following condition holds.
\begin{Supp}
\label{supp4} There exist numbers $m_*\in \N$, $\kappa>0$,
$\gamma\le 0$, $\nu\in \R$, $\alpha_*\in \R$, $\lambda_u\in \R$,
$\lambda_w\in \R$ such that $\lambda_u+\lambda_w=\frac 1p-\frac
1q$ and the following assertions hold.
\begin{enumerate}
\item For any $j'\ge j\ge j_{\min}$ and for any vertex $\xi \in {\bf V}
_{j-j_{\min}}^{{\cal A}}(\xi_0)$
\begin{align}
\label{card_jj1_1} {\rm card}\, {\bf V}_{j'-j}^{{\cal A}}(\xi) \le
c_3 \frac{(m_*j)^\gamma |\log (m_*j)|^\nu}{(m_*j')^\gamma|\log
(m_*j')|^\nu}.
\end{align}
\item For any $j\ge j_{\min}$, $i\in \tilde I_j$
\begin{align}
\label{uetaji1} \begin{array}{c} u(\eta_{j,i}) =u_j= 2^{\kappa
m_*j}(m_*j)^{\alpha_*}|\log (m_*j)|^{-\lambda_u}, \\
w(\eta_{j,i}) =w_j=2^{-\kappa m_*j} (m_*j)^{-\alpha_*}|\log
(m_*j)|^{-\lambda_w}. \end{array}
\end{align}
\end{enumerate}
\end{Supp}

The partition $\{{\cal A}_{t,i}\}_{t\ge t_0, \, i\in \hat J_t}$ is
defined as follows. Let $$t_0=\min \{t\in \Z_+:\; 2^{2^t}> m_*
j_{\min}\}.$$ Given $t\ge t_0$, we denote by $\Gamma_t$ the
maximal subgraph in ${\cal A}$ on the vertex set $\{\eta_{j,s}:\;
2^{2^{t-1}}\le m_*j<2^{2^t}, \; s\in \tilde I_j\}$, and by ${\cal
A}_{t,i}$, $i\in \hat J_t$, the connected components of
$\Gamma_t$. Then
\begin{align}
\label{c_vgt_1} {\rm card}\, {\bf V}(\Gamma_t)
\underset{\mathfrak{Z}_0}{\lesssim} 2^{(1-\gamma) 2^t}2^{-\nu t}.
\end{align}

Repeating the proof of Lemma 5.1 in \cite{vas_vl_raspr2} and
taking into account that $p<q$, we get the following assertion.

\begin{Lem}
\label{lem_est1} Let ${\cal D}$ be a subtree in ${\cal A}$, and
let $\xi_*$ be its minimal vertex. Then
\begin{align}
\label{fpxifd1} \|f-P_{\xi_*}f\|_{Y_q({\cal D})}^q
\underset{\mathfrak{Z}_0}{\lesssim} \sum \limits _{t=t_0}
^{\infty} 2^{(1-\frac qp)t} \sum \limits _{i\in J_{t, {\cal D}}}
\|f\|_{X_p(\Omega _{{\cal D}_{t,i}})} ^q.
\end{align}
\end{Lem}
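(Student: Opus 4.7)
The plan is to reduce the vector-valued Hardy inequality (\ref{hfxi}) to a pointwise bound term-by-term, exactly as in the case $\kappa>\theta/q$ of Lemma \ref{lem_est}, and then use the special form of the weights in Assumption \ref{supp4} to produce the factor $2^{(1-q/p)t}$ on each dyadic block $\Gamma_t$.

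First, raising (\ref{hfxi}) to the $q$-th power gives
$$
\|f-P_{\xi_*}f\|^q_{Y_q(\Omega_{\cal D})}\underset{\mathfrak{Z}_0}{\lesssim}\sum_{\xi\in{\bf V}({\cal D})}w^q(\xi)\left(\sum_{\xi_*\le\xi'\le\xi}u(\xi')\|f\|_{X_p(\hat F(\xi'))}\right)^q.
$$
I then import the discrete Hardy-type estimate from the proof of Lemma 5.1 in \cite{vas_vl_raspr2}. Its hypotheses are satisfied: the weights $u_j$, $w_j$ are of the form (\ref{uetaji1}) with $\lambda_u+\lambda_w=\frac1p-\frac1q$, the branching bound (\ref{card_jj1_1}) controls the tree, and $p<q$. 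The conclusion is
$$
\|f-P_{\xi_*}f\|^q_{Y_q(\Omega_{\cal D})}\underset{\mathfrak{Z}_0}{\lesssim}\sum_{\xi\in{\bf V}({\cal D})}u^q(\xi)w^q(\xi)\|f\|^q_{X_p(\hat F(\xi))}.
$$

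Next I plug in the explicit weights. By (\ref{uetaji1}) and $\lambda_u+\lambda_w=\frac1p-\frac1q$, for $\xi=\eta_{j,i}$
$$
u^q(\xi)w^q(\xi)=|\log(m_*j)|^{-q(\lambda_u+\lambda_w)}=|\log(m_*j)|^{1-q/p}.
$$
For $\xi\in{\bf V}(\Gamma_t)\cap{\bf V}({\cal D})$ the partition rule $2^{2^{t-1}}\le m_*j<2^{2^t}$ gives $\log(m_*j)\asymp 2^t$, hence $u^q(\xi)w^q(\xi)\underset{\mathfrak{Z}_0}{\asymp}2^{t(1-q/p)}$. Splitting the sum over ${\bf V}({\cal D})$ according to the blocks $\Gamma_t$ therefore yields
$$
\|f-P_{\xi_*}f\|^q_{Y_q(\Omega_{\cal D})}\underset{\mathfrak{Z}_0}{\lesssim}\sum_{t=t_0}^{\infty}2^{(1-q/p)t}\sum_{i\in J_{t,{\cal D}}}\sum_{\xi\in{\bf V}({\cal D}_{t,i})}\|f\|^q_{X_p(\hat F(\xi))}.
$$

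Finally, I use $p<q$ together with the $l_p$-additivity (\ref{f_xp}) of $\|\cdot\|_{X_p}$: the monotonicity of $l_r$-norms gives
$$
\sum_{\xi\in{\bf V}({\cal D}_{t,i})}\|f\|^q_{X_p(\hat F(\xi))}\le\left(\sum_{\xi\in{\bf V}({\cal D}_{t,i})}\|f\|^p_{X_p(\hat F(\xi))}\right)^{q/p}=\|f\|^q_{X_p(\Omega_{{\cal D}_{t,i}})},
$$
which delivers (\ref{fpxifd1}). The only real obstacle is step~2, the invocation of the auxiliary Lemma 5.1 from \cite{vas_vl_raspr2}; once the discrete Hardy bound is diagonalized, the rest is bookkeeping with the $|\log|$-type weights and the $p<q$ monotonicity.
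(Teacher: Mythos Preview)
Your proposal is correct and follows exactly the route the paper indicates: the paper's own proof of this lemma is simply ``Repeating the proof of Lemma 5.1 in \cite{vas_vl_raspr2} and taking into account that $p<q$'', and you have spelled out precisely those two ingredients (the diagonalization of the discrete Hardy sum, followed by the $l_p$--$l_q$ monotonicity to pass from $\sum_\xi\|f\|_{X_p(\hat F(\xi))}^q$ to $\|f\|_{X_p(\Omega_{{\cal D}_{t,i}})}^q$). Your identification of $u^q(\xi)w^q(\xi)\asymp 2^{t(1-q/p)}$ via the block rule $2^{2^{t-1}}\le m_*j<2^{2^t}$ is the intended computation.
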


From Assumption \ref{supp2}, (\ref{c_vgt_1}) and (\ref{fpxifd1})
we obtain Assumptions \ref{sup1}, \ref{sup2}, \ref{sup3}.

\begin{Rem}
\label{discr_rem} Suppose that Assumption \ref{supp2} is replaced
by the following condition: for any $\xi\in {\bf V}({\cal A})$ the
set $\hat F(\xi)$ is the atom of ${\rm mes}$. Then the assertion
of Theorem \ref{main_tree} holds as well (see Remark
\ref{discr_rem0}).
\end{Rem}

\section{Estimates for entropy numbers of embeddings of weighted Sobolev spaces}

Let us define the weighted Sobolev class $W^r_{p,g}(\Omega)$ and
the weighted Lebesgue space $L_{q,v}(\Omega)$.

Let $\Omega \subset \R^d$ be a bounded domain, and let $g$,
$v:\Omega\rightarrow (0, \, \infty)$ be measurable functions. For
each measurable vector-valued function $\psi:\ \Omega\rightarrow
\R^l$, $\psi=(\psi_k) _{1\le k\le l}$, and for each $p\in [1, \,
\infty)$, we put
$$
\|\psi\|_{L_p(\Omega)}= \Big\|\max _{1\le k\le l}|\psi _k |
\Big\|_p=\left(\int \limits_\Omega \max _{1\le k\le l}|\psi _k(x)
|^p\, dx\right)^{1/p}.
$$
Let $\overline{\beta}=(\beta _1, \, \dots, \, \beta _d)\in
\Z_+^d:=(\N\cup\{0\})^d$, $|\overline{\beta}| =\beta _1+
\ldots+\beta _d$. For any distribution $f$ defined on $\Omega$ we
write $\displaystyle \nabla ^r\!f=\left(\partial^{r}\! f/\partial
x^{\overline{\beta}}\right)_{|\overline{\beta}| =r}$ (here partial
derivatives are taken in the sense of distributions), and denote
by $l_{r,d}$ the number of components of the vector-valued
distribution $\nabla ^r\!f$. We set
$$
W^r_{p,g}(\Omega)=\left\{f:\ \Omega\rightarrow \R\big| \; \exists
\psi :\ \Omega\rightarrow \R^{l_{r,d}}\!:\ \| \psi \|
_{L_p(\Omega)}\le 1, \, \nabla ^r\! f=g\cdot \psi\right\}
$$
\Big(we denote the corresponding function $\psi$ by
$\displaystyle\frac{\nabla ^r\!f}{g}$\Big),
$$
\| f\|_{L_{q,v}(\Omega)}{=}\| f\|_{q,v}{=}\|
fv\|_{L_q(\Omega)},\qquad L_{q,v}(\Omega)=\left\{f:\Omega
\rightarrow \R| \; \ \| f\| _{q,v}<\infty\right\}.
$$
We call the set $W^r_{p,g}(\Omega)$ a weighted Sobolev class.
Observe that if $g\in L_{p'}^{{\rm loc}}(\Omega)$, then $\nabla^r
f\in L_1^{{\rm loc}}(\Omega)$.

For $x\in \R^d$ and $a>0$ we shall denote by  $B_a(x)$ the closed
Euclidean ball of radius $a$ in $\R^d$ centered at the point $x$.
\begin{Def}
\label{fca} Let $\Omega\subset\R^d$ be a bounded domain, and let
$a>0$. We say that $\Omega \in {\bf FC}(a)$ if there exists a
point $x_*\in \Omega$ such that, for any $x\in \Omega$, there
exist a number $T(x)>0$ and a curve $\gamma _x:[0, \, T(x)]
\rightarrow\Omega$ with the following properties:
\begin{enumerate}
\item $\gamma _x\in AC[0, \, T(x)]$, $\left|\frac{d \gamma _x(t)}{dt}\right|=1$ a.e.,
\item $\gamma _x(0)=x$, $\gamma _x(T(x))=x_*$,
\item $B_{at}(\gamma _x(t))\subset \Omega$ for any $t\in [0, \, T(x)]$.
\end{enumerate}
\end{Def}

\smallskip

\begin{Def}
We say that $\Omega$ satisfies the John condition (and call
$\Omega$ a John domain) if $\Omega\in {\bf FC}(a)$ for some $a>0$.
\end{Def}

For a bounded domain the John condition is equivalent to the
flexible cone condition (see the definition in \cite{besov_il1}).
As examples of such domains we can take \begin{enumerate}
\item domains with Lipschitz boundary; \item the  Koch's
snowflake; \item domains $\Omega=\cup _{0< t\le T} {\rm int}\,
B_{ct}(\gamma(t))$, where $\gamma:[0, \, T] \rightarrow \R^d$ is a
curve with natural parametrization and $c>0$.
\end{enumerate} Domains with zero inner angles do not satisfy the
John condition.

We denote by $\mathbb{H}$ the set of all nondecreasing positive
functions defined on $(0, \, 1]$.
\begin{Def}
\label{h_set} {\rm (see \cite{m_bricchi1}).} Let $\Gamma\subset
\R^d$ be a nonempty compact set and $h\in \mathbb{H}$. We say that
$\Gamma$ is an $h$-set if there are a constant $c_*\ge 1$ and a
finite countably additive measure $\mu$ on $\R^d$ such that $\supp
\mu=\Gamma$ and
\begin{align}
\label{c1htmu} c_*^{-1}h(t)\le \mu(B_t(x))\le c_* h(t)
\end{align}
for any $x\in \Gamma$ and $t\in (0, \, 1]$.
\end{Def}

\begin{Exa}
Let $\Gamma \subset \R^d$ be a Lipschitz manifold of dimension
$k$, $0\le k<d$. Then $\Gamma$ is an $h$-set with $h(t)=t^k$.
\end{Exa}

\begin{Exa}
Let $\Gamma \subset \R^2$ be the Koch curve. Then $\Gamma$ is an
$h$-set with $h(t)=t^{\log 4/\log 3}$ (see \cite[p.
66--68]{p_mattila}).
\end{Exa}

Let $\Omega\in {\bf FC}(a)$ be a bounded domain, and let
$\Gamma\subset \partial \Omega$ be an $h$-set. Below we consider a
function $h\in \mathbb{H}$ which has the following form near zero:
\begin{align}
\label{def_h} h(t)=t^{\theta}|\log t|^{\gamma}\tau(|\log t|),
\;\;\; 0\le \theta<d,
\end{align}
where $\tau:(0, \, +\infty)\rightarrow (0, \, +\infty)$ is an
absolutely continuous function such that
\begin{align}
\label{yty} \frac{t\tau'(t)}{\tau(t)} \underset{t\to+\infty}{\to}
0.
\end{align}

Let $1<p\le \infty$, $1\le q<\infty$, $r\in \N$, $\delta:=r+\frac
dq-\frac dp>0$, $\beta_g$, $\beta_v\in \R$, $g(x)=\varphi_g({\rm
dist}_{|\cdot|}(x, \, \Gamma))$, $v(x)=\varphi_v({\rm
dist}_{|\cdot|}(x, \, \Gamma))$,
\begin{align}
\label{ghi_g0} \varphi_g(t)=t^{-\beta_g}|\log
t|^{-\alpha_g}\rho_g(|\log t|), \;\; \varphi_v(t)=t^{-\beta_v}
|\log t|^{-\alpha_v}\rho_v(|\log t|),
\end{align}
where $\rho_g$ and $\rho_v$ are absolutely continuous functions
such that
\begin{align}
\label{psi_cond} \frac{t\rho'_g(t)}{\rho_g(t)}
\underset{t\to+\infty}{\to}0, \;\; \frac{t\rho'_v(t)}{\rho_v(t)}
\underset{t\to+\infty}{\to}0.
\end{align}

Moreover, assume that
\begin{align}
\label{muck} \beta_v<\frac{d-\theta}{q} \quad \text{or} \quad
\beta_v=\frac{d-\theta}{q}, \quad \alpha_v>\frac{1-\gamma}{q}.
\end{align}

Without loss of generality we may consider
$\overline{\Omega}\subset \left(-\frac 12, \, \frac 12\right)^d$.

Denote $$\beta=\beta_g+\beta_v,\quad \alpha=\alpha_g+\alpha_v,
\quad \rho(y)=\rho_g(y)\rho_v(y),$$ $\mathfrak{Z}=(r,\, d, \, p,
\, q, \, g, \, v, \, h, \, a, \, c_*)$,
$\mathfrak{Z}_*=(\mathfrak{Z}, \, R)$, where $c_*$ is the constant
from Definition \ref{h_set} and \label{r_def}$R={\rm diam}\,
\Omega$.

Denote by ${\cal P}_{r-1}(\R^d)$ the space of polynomials on
$\R^d$ of degree not exceeding $r-1$. For a measurable set
$E\subset \R^d$ we put $${\cal P}_{r-1}(E)= \{f|_E:\, f\in {\cal
P}_{r-1}(\R^d)\}.$$ Observe that $W^r_{p,g}(\Omega) \supset {\cal
P}_{r-1}(\Omega)$.

In Theorems \ref{th1}, \ref{th3} the conditions on weights are
such that $W^r_{p,g}(\Omega) \subset L_{q,v}(\Omega)$ and there
exist $M>0$ and a linear continuous operator $P:L_{q,v}(\Omega)
\rightarrow {\cal P}_{r-1}(\Omega)$ such that for any function
$f\in W^r_{p,g}(\Omega)$
\begin{align}
\label{fpflqv} \|f-Pf\|_{L_{q,v}(\Omega)} \le M
\left\|\frac{\nabla^r f}{g}\right\| _{L_p(\Omega)}
\end{align}
(see \cite{vas_vl_raspr, vas_vl_raspr2, vas_sib, vas_w_lim}).

Denote ${\cal W}^r_{p,g}(\Omega) ={\rm span}\, W^r_{p,g}(\Omega)$,
$\hat {\cal W}^r_{p,g}(\Omega) =\{f-Pf:\; f\in {\cal
W}^r_{p,g}(\Omega)\}$. Let $\hat {\cal W}^r_{p,g}(\Omega)$ be
equipped with norm $\|f\|_{\hat {\cal W}^r_{p,g}(\Omega)}:=
\left\|\frac{\nabla^r f}{g}\right\| _{L_p(\Omega)}$. Denote by
$I:\hat {\cal W}^r_{p,g}(\Omega) \rightarrow L_{q,v}(\Omega)$ the
embedding operator. From (\ref{fpflqv}) it follows that it is
continuous.

First we consider the case $0<\theta<d$. We set
$$
\alpha_0:=\left\{ \begin{array}{l} \alpha \quad \text{for} \quad
\beta_v<\frac{d-\theta}{q}, \\ \alpha-\frac 1q \quad \text{for}
\quad \beta_v=\frac{d-\theta}{q}.\end{array} \right.
$$
In \cite{vas_entr} the estimates for entropy numbers $e_n(I:\hat
{\cal W}^r_{p,g}(\Omega) \rightarrow L_{q,v}(\Omega))$ were
obtained under the following conditions. In the case
$\delta-\beta>\theta\left(\frac 1q-\frac 1p\right)_+$ we assumed
that $\frac{\delta}{d}\ne \frac{\delta-\beta}{\theta}$. In the
case $\delta-\beta=\theta\left(\frac 1q-\frac 1p\right)_+$ we
assumed that $\alpha_0\ne \frac 1p-\frac 1q$ for $p<q$. Now we
obtain estimates for $p<q$, $\beta-\delta=0$, $\alpha_0= \frac
1p-\frac 1q$.

\begin{Trm} \label{th1}
Suppose that the conditions (\ref{def_h})--
(\ref{muck}) hold, $\rho_g\equiv 1$, $\rho_v\equiv 1$. Let
$0<\theta<d$, $p<q$, $\beta=\delta$, and let $\alpha_0= \frac
1p-\frac 1q$. Then
$$
e_n(I:\hat {\cal W}^r_{p,g}(\Omega) \rightarrow L_{q,v}(\Omega))
\underset{\mathfrak{Z}_*}{\asymp} n^{\frac 1q-\frac 1p}.
$$
\end{Trm}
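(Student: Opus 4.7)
The proof splits into a matching upper and lower bound. For the upper bound I would invoke the machinery of Sections 2--3 after identifying the correct tree structure and parameters. First, I construct a tree $({\cal A},\xi_0)$ by a dyadic-type decomposition of $\Omega$ adapted to the $h$-set $\Gamma$, so that vertices $\eta_{j,i}$ with $j\ge j_{\min}$ index subdomains $\hat F(\eta_{j,i})\subset\Omega$ lying in a shell at distance $\asymp h^{-1}(2^{-m_*j})$ from $\Gamma$, and I put $X_p(E)=\hat{\cal W}^r_{p,g}(E)$, $Y_q(E)=L_{q,v}(E)$, ${\cal P}(E)={\cal P}_{r-1}(E)$. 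Properties 1--5 at page \pageref{xpyq} are standard for weighted Sobolev/Lebesgue spaces on John domains (cf.\ \cite{vas_vl_raspr, vas_vl_raspr2}).

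The next step is the parameter identification. Using the $h$-set bound (\ref{c1htmu}) with (\ref{def_h}), the cardinality estimate (\ref{card_jj}) holds with exponent $\theta$, logarithmic power $\gamma$, and the same $\tau$. The weights take the form (\ref{uetaji}) with $\kappa$ determined by $\beta_g,\beta_v,\theta,q$ and $\alpha_u,\alpha_w$ determined by $\alpha_g,\alpha_v$; the hypothesis $\beta=\delta$ forces $\kappa=\theta/q$ (the critical case), while $\alpha_0=\frac1p-\frac1q$ together with the second alternative of (\ref{muck}) yields $\alpha_u+\alpha_w=\frac1p$ and $\alpha_w>\frac{1-\gamma}{q}$, exactly matching conditions 1 and 2 of Assumption \ref{supp3}. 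Assumption \ref{supp1} follows from a Sobolev--Poincar\'e estimate on each cell chained along ${\cal A}$, giving (\ref{hfxi}) with these $u,w$. Assumption \ref{supp2}, with $\delta_*=r/d$, follows from a further dyadic subdivision of each cell $\hat F(\xi)$ into pieces of bounded geometry and polynomial approximation on each piece.

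With Assumptions \ref{supp1}--\ref{supp3} verified, Lemma \ref{lem_est} (case $\kappa=\theta/q$) yields Assumption \ref{sup1}; Assumptions \ref{sup2}, \ref{sup3} follow from (\ref{fpefyq}) and (\ref{c_vgt}). Theorem \ref{main_tree} then provides
\[
e_n(I:\hat{\cal W}^r_{p,g}(\Omega)\to L_{q,v}(\Omega))\underset{\mathfrak{Z}_*}{\lesssim}n^{\frac1q-\frac1p}.
\]
For the lower bound, I would pick $N\asymp n$ disjoint cells $\hat F(\xi_\ell)$ at level $t\asymp t_{**}(n)$ lying in a shell near $\Gamma$ of normalized thickness, and on each one place a smooth bump $\varphi_\ell$ supported inside with $\|\varphi_\ell\|_{\hat{\cal W}^r_{p,g}}\asymp1$; a direct computation using $\beta=\delta$ and $\alpha_0=\frac1p-\frac1q$ shows $\|\varphi_\ell\|_{L_{q,v}}\asymp n^{\frac1q-\frac1p}$ uniformly in $\ell$, so the restriction of $I$ to $\mathrm{span}\{\varphi_\ell\}$ is comparable to $n^{\frac1q-\frac1p}$ times the embedding $l_p^N\hookrightarrow l_q^N$. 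Theorem \ref{shutt_trm} then gives $e_n(I)\gtrsim n^{\frac1q-\frac1p}$ since $e_n(l_p^N\to l_q^N)\asymp 1$ for $N\asymp n$.

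The principal technical obstacle will be condition 5b of Assumption \ref{supp3}: constructing an auxiliary tree $\hat{\cal A}$ with the two-sided cardinality bound (\ref{card_jj1}) and showing that the Hardy-type summation constant on any subtree of ${\cal A}$ is controlled by the corresponding constant on $\hat{\cal A}$ for arbitrary weight sequences $\overline{u},\overline{w}$. This amounts to a regularization of the cell-count along ${\cal A}$ which exploits the slow variation of $\tau$ guaranteed by (\ref{yty}) and the uniformity of the $h$-set measure of $\Gamma$; once this is in place, the rest of the proof is routine bookkeeping inside the framework of Section~3.
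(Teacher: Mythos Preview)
Your overall strategy for the upper bound is the paper's: build the tree $({\cal A},\xi_0)$ from the Whitney-type decomposition of \cite{vas_vl_raspr,vas_vl_raspr2}, verify Assumptions \ref{supp1}--\ref{supp3}, and invoke Theorem \ref{main_tree} via Lemma \ref{lem_est}. However, several of your parameter identifications are off and one case is missing. First, the cells $\hat F(\eta_{j,i})$ lie at distance $\asymp 2^{-m_*j}$ from $\Gamma$, not $h^{-1}(2^{-m_*j})$; with this correction one gets $u_j=\varphi_g(2^{-m_*j})2^{-(r-d/p)m_*j}$, $w_j=\varphi_v(2^{-m_*j})2^{-dm_*j/q}$, hence $\kappa=\frac dq-\beta_v$. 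Second, the hypothesis $\beta=\delta$ does \emph{not} force $\kappa=\theta/q$: that equality holds only in the second alternative of (\ref{muck}); under the first alternative $\beta_v<(d-\theta)/q$ one has $\kappa>\theta/q$, in which case condition 5 of Assumption \ref{supp3} is not needed and Lemma \ref{lem_est} uses the easier branch. You must treat both alternatives. Third, Assumption \ref{supp2} holds with $\delta_*=\delta/d$ (where $\delta=r+\frac dq-\frac dp$), not $r/d$; see \cite{vas_john,besov_peak_width}. Finally, for condition 5 of Assumption \ref{supp3} in the genuinely critical case, the regular comparison tree $\hat{\cal A}$ and the bound in 5b are supplied by Lemma~2 of \cite{vas_sib}, not by an ad hoc regularization of $\tau$.

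Your lower-bound sketch contains an internal inconsistency: with $N\asymp n$ bumps Theorem \ref{shutt_trm} gives $e_n(l_p^N\to l_q^N)\asymp N^{1/q-1/p}\asymp n^{1/q-1/p}$, not $\asymp 1$; combined with your claimed bump ratio $n^{1/q-1/p}$ this would yield only $n^{2(1/q-1/p)}$. The argument that actually works (and is what \cite{vas_entr} does) takes $N\asymp 2^n$ cells at level $t\asymp t_{**}(n)$, so that $n\lesssim\log N$ and indeed $e_n(l_p^N\to l_q^N)\asymp 1$. Note also that in the critical case $\beta_v=(d-\theta)/q$ a single-cell bump has ratio $(m_*j)^{-1/p}$, not $(m_*j)^{1/q-1/p}$, so the test functions must be spread over a full subtree rather than one cell to recover the correct scale; this is where the two-sided estimate on the Hardy constant (\ref{spq_duw1}) enters.
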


\smallskip

Now we consider the case $\theta=0$, $\beta_v<\frac{d}{q}$. We
assume that
\begin{align}
\label{rho_g_l} \rho_g(t)=|\log t|^{-\lambda_g}, \quad
\rho_v=|\log t|^{-\lambda_v}, \quad \tau(t)=|\log t|^{\nu}.
\end{align}
Denote $\lambda =\lambda_g+\lambda_v$.

In \cite{vas_entr} estimates of entropy numbers were obtained in
the following cases:
\begin{enumerate}
\item $\delta-\beta>\theta\left(\frac 1q-\frac 1p\right)_+$,
\item $\delta-\beta=\theta\left(\frac 1q-\frac 1p\right)_+$,
$\alpha>(1-\gamma) \left(\frac 1q-\frac 1p\right)_+$,
$\frac{\delta}{d}\ne \frac{\alpha}{1-\gamma}$,
\item $\delta-\beta=\theta\left(\frac 1q-\frac 1p\right)_+$,
$\alpha=(1-\gamma) \left(\frac 1q-\frac 1p\right)_+$, and if
$p<q$, then $\lambda \ne \frac 1p-\frac 1q$.
\end{enumerate}
Here we obtain the estimates for $p<q$, $\delta-\beta=0$,
$\alpha=0$, $\lambda = \frac 1p-\frac 1q$.

\begin{Trm} \label{th3}
Let the conditions (\ref{def_h}), (\ref{ghi_g0}), (\ref{rho_g_l})
hold, and let $\theta=0$, $\beta-\delta=0$, $\beta_v<\frac{d}{q}$,
$\alpha=0$, $p<q$, $\lambda= \frac 1p-\frac 1q$. Then
$$
e_n(I:\hat {\cal W}^r_{p,g}(\Omega) \rightarrow L_{q,v}(\Omega))
\underset{\mathfrak{Z}_*}{\asymp} n^{\frac 1q-\frac 1p}.
$$
\end{Trm}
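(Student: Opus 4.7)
\textbf{Plan for the proof of Theorem \ref{th3}.} The strategy is to derive the upper bound from the general machinery of Section 3 (specifically the case when Assumptions \ref{supp1}, \ref{supp2}, \ref{supp4} hold) and to match it with a direct lower bound construction. This mirrors the scheme of the non-critical cases treated in \cite{vas_entr}, but requires careful tracking of the logarithmic parameters so that the critical relation $\lambda = \frac 1p - \frac 1q$ is used sharply in both directions.

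For the upper bound, I would first construct a tree $({\cal A}, \xi_0)$ whose vertices index a Whitney-type decomposition of $\Omega$ adapted to the $h$-set $\Gamma$: cubes lying at distance $\asymp 2^{-m_* j}$ from $\Gamma$ are grouped into generation $j$, and the graph structure reflects inclusion/adjacency. Since $\theta = 0$, the $h$-set condition (\ref{c1htmu}) combined with the John property of $\Omega$ forces the cardinality of ${\bf V}_{j'-j}^{{\cal A}}(\xi)$ to be controlled purely by the logarithmic factors of $h$, which is exactly (\ref{card_jj1_1}) with $\gamma$, $\nu$ read off from (\ref{rho_g_l}) and (\ref{def_h}). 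The map $\hat F$ sends each vertex to its associated cube, $X_p$ is $\hat{\cal W}^r_{p,g}$, $Y_q$ is $L_{q,v}$, and ${\cal P}(\Omega) = {\cal P}_{r-1}(\Omega)$. The projections $P_{\xi_*}$ of Assumption \ref{supp1} are local polynomial approximations built by iterating Poincaré--Sobolev inequalities along paths in the tree, using the John condition to pass between adjacent cubes; the resulting weights fit the form (\ref{uetaji1}) with $\kappa$ determined by $\beta_g + \beta_v$ and with $\alpha_* = 0$, $\lambda_u = \lambda_g$, $\lambda_w = \lambda_v$. The hypothesis $\lambda = \frac 1p - \frac 1q$ is then precisely the identity $\lambda_u + \lambda_w = \frac 1p - \frac 1q$ required by Assumption \ref{supp4}. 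Assumption \ref{supp2} is routine: on a non-degenerate cube at level $j$ one subdivides dyadically and uses the standard $W^r_{p,g} \hookrightarrow L_{q,v}$ approximation, with the weight $u(\xi)w(\xi)$ absorbing the local behavior of $\varphi_g\varphi_v$. Once all three assumptions are verified, Section 3 implies Assumptions \ref{sup1}--\ref{sup3}, and Theorem \ref{main_tree} yields $e_n(I) \underset{\mathfrak{Z}_*}{\lesssim} n^{\frac 1q - \frac 1p}$.

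For the lower bound, I would use the standard reduction to finite-dimensional embeddings. Choose a generation $j$ with $m_* j \asymp \log n$ and, using the lower half of (\ref{c1htmu}), find $m \asymp n$ pairwise disjoint cubes $Q_1, \dots, Q_m$ attached to $\Gamma$ at that distance scale. Pick a fixed $\varphi \in C_0^\infty([0,1]^d)$ with $\int \varphi \ne 0$, rescale it to each $Q_i$, and consider the map $(a_i)_{i=1}^m \mapsto \sum_i a_i \varphi_i$ which embeds $l_p^m$ into $\hat{\cal W}^r_{p,g}(\Omega)$ and $l_q^m$ into $L_{q,v}(\Omega)$. Because $\varphi_g$ and $\varphi_v$ are essentially constant on each $Q_i$ at the chosen scale, the Banach--Mazur distortions produce multiplicative factors involving powers of $2^{-\kappa m_* j}$ and $(m_* j)^{-\lambda}$; the choice of scale together with the critical identity $\lambda = \frac 1p - \frac 1q$ makes the logarithmic powers cancel precisely, leaving an undistorted copy of $I_m: l_p^m \to l_q^m$ inside $I$. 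Applying Theorem \ref{shutt_trm} with $\nu = m \asymp n$ and $k = n$ then produces the matching lower bound $e_n(I) \underset{\mathfrak{Z}_*}{\gtrsim} n^{\frac 1q - \frac 1p}$.

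The main obstacle lies in Step 2, namely in verifying that the weights coming from the chain of Poincaré--Sobolev estimates match (\ref{uetaji1}) with parameters satisfying Assumption \ref{supp4} with no leftover logarithmic discrepancy. The delicate bookkeeping is complicated by the simultaneous presence of three logarithmic quantities ($\rho_g$, $\rho_v$, $\tau$) and by the requirement $\alpha = 0$, which means that the subdominant logarithmic corrections that were treated as perturbations in the non-critical cases of \cite{vas_entr} now drive the exact order of the answer. A secondary subtlety is to choose the scale $m_* j \asymp \log n$ in the lower bound with the correct implicit constants, so that the lower bound from Theorem \ref{shutt_trm} is not shifted into a different regime of $k/\nu$.
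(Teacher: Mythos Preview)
Your upper-bound argument is essentially the one the paper gives: verify Assumptions \ref{supp1}, \ref{supp2}, \ref{supp4} for the Whitney-type tree of \cite{vas_vl_raspr, vas_vl_raspr2} and invoke Theorem \ref{main_tree} through the $\theta=0$ branch of Section~3. One correction: the parameter $\alpha_*$ in (\ref{uetaji1}) is not forced to be zero. The weight computation gives $u_j \asymp 2^{(\beta_g - r + d/p)\overline{s}j}(\overline{s}j)^{-\alpha_g}|\log(\overline{s}j)|^{-\lambda_g}$ and $w_j \asymp 2^{(\beta_v-d/q)\overline{s}j}(\overline{s}j)^{-\alpha_v}|\log(\overline{s}j)|^{-\lambda_v}$, so $\kappa = d/q - \beta_v$ and $\alpha_* = -\alpha_g = \alpha_v$; the hypothesis $\alpha = \alpha_g + \alpha_v = 0$ is exactly what makes the two $(m_*j)$--exponents match as $\pm\alpha_*$ in (\ref{uetaji1}), not what makes $\alpha_*$ vanish.

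The lower-bound sketch, however, does not go through when $\theta = 0$. With $h(t) = |\log t|^\gamma |\log|\log t||^\nu$ the number of Whitney cubes at distance $\asymp 2^{-m_*j}$ from $\Gamma$ is only of order $(m_*j)^{-\gamma}|\log(m_*j)|^{-\nu}$ (this is the two-sided version of (\ref{card_jj1_1})), so choosing $m_*j \asymp \log n$ produces at most $(\log n)^{-\gamma}$ disjoint cubes, nowhere near $m \asymp n$. Even ignoring that, the single-scale diagonal weight is $u_jw_j = |\log(m_*j)|^{-\lambda}$, which at $m_*j \asymp \log n$ equals $(\log\log n)^{-\lambda}$: the promised ``precise cancellation'' of logarithmic powers does not occur, and you would obtain only $(\log\log n)^{1/q-1/p}n^{1/q-1/p}$. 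The scale $m_*j \asymp \log n$ is the correct one for Theorem~\ref{th1} ($\theta>0$), where the number of Whitney cubes grows like $2^{\theta m_*j}$; for $\theta = 0$ the lower bound needs a genuinely different mechanism --- either a multi-scale diagonal embedding that feeds into the two-sided estimate of Theorem~\ref{kuhn_seq}, or the argument of \cite{vas_entr} to which the paper simply defers. What you flag as a ``secondary subtlety'' about implicit constants is in fact a structural obstruction.
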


\renewcommand{\proofname}{\bf Proof of Theorems \ref{th1} and \ref{th3}}
\begin{proof}
The lower estimates are proved similarly as in \cite{vas_entr}.

Let us obtain upper estimates. To this end, we apply Theorem
\ref{main_tree}. Consider the tree ${\cal A}$ with vertex set
$\{\eta_{j,i}\}_{j\ge j_{\min}, i\in \tilde I_j}$ and the
partition of the domain $\Omega$ into subdomains
$\Omega[\eta_{j,i}]$, as defined in \cite{vas_vl_raspr},
\cite{vas_vl_raspr2}. We set $\hat F(\eta_{j,i})
=\Omega[\eta_{j,i}]$. Let the number $\overline{s}=\overline{s}(a,
\, d)\ge 4$ be as defined in \cite{vas_vl_raspr}. Repeating
arguments from the proof of Theorem 2 in \cite{vas_vl_raspr2}
(without summation over vertices $\xi \in {\bf V}({\cal
A}_{\xi_*})$), we obtain that Assumption \ref{supp1} holds with
$u(\eta_{j,i})= \varphi_g(2^{-\overline{s}j}) \cdot 2^{-\left(r -
\frac dp\right)\overline{s}j}$, $w(\eta_{j,i}) =
\varphi_v(2^{-\overline{s}j}) \cdot 2^{-\frac{d
\overline{s}j}{q}}$. Assumption \ref{supp2} holds with
$\delta_*=\frac{\delta}{d}$ (see \cite{vas_john}, \cite[Lemma
8]{vas_width_raspr}, \cite{besov_peak_width}). If $\theta>0$, then
Assumption \ref{supp3} holds; condition 5 of this assumption
follows from Lemma 2 in \cite{vas_sib}. If $\theta=0$, then
Assumption \ref{supp4} holds. In both cases we have $\kappa =\frac
dq-\beta_v$, $\alpha_u=\alpha_g$, $\alpha_w=\alpha_v$. In the case
$\theta=0$ we have $\lambda_u=\lambda_g$, $\lambda_w=\lambda_v$.
By Theorem \ref{main_tree}, we obtain the desired upper estimates
of entropy numbers.
\end{proof}
\renewcommand{\proofname}{\bf Proof}

\section{Estimates for entropy numbers of two-weighted summation operators on a tree}

Applying Remark \ref{discr_rem}, we obtain estimates for entropy
numbers of weighted summation operators on trees.

Let ${\cal G}$ be a graph. Given a function $f:{\bf V}({\cal
G})\rightarrow \R$, we set
\begin{align}
\label{flpg} \|f\|_{l_p({\cal G})}=\left ( \sum \limits _{\xi \in
{\bf V} ({\cal G})}|f(\xi)|^p\right )^{1/p} \quad \text{for} \quad
1\le p<\infty, \quad \|f\|_{l_\infty({\cal G})}=\sup _{\xi \in
{\bf V} ({\cal G})} |f(\xi)|.
\end{align}
Denote by $l_p({\cal G})$ the space of functions $f:{\bf V}({\cal
G})\rightarrow \R$ with finite norm $\|f\|_{l_p({\cal G})}$.

Let ${\cal A}$ be a tree, ${\bf V}({\cal A}) =\{\eta_{j,i}:\; j\in
\Z_+, \;\; i\in I_j\}$, let $\eta_{0,1}$ be the minimal vertex of
${\cal A}$, and let  ${\bf V}^{{\cal A}}_j(\eta_{0,1})
=\{\eta_{j,i}\}_{i\in I_j}$ for any $j\in \Z_+$. Suppose that for
some $c_*\ge 1$, $m_*\in \N$
$$
c_*^{-1}\frac{h(2^{-m_*j})} {h(2^{-m_*(j+l)})}\le {\bf V}^{\cal
A}_l(\eta_{j,i})\le  c_*\frac{h(2^{-m_*j})} {h(2^{-m_*(j+l)})},
\quad j, \; l\in \Z_+,
$$
where the function $h\in \mathbb{H}$ is defined by (\ref{def_h})
near zero. Let $u$, $w:{\bf V}({\cal A}) \rightarrow (0, \,
\infty)$, $u(\eta_{j,i})=u_j$, $w(\eta_{j,i})=w_j$, $j\in \Z_+$,
$i\in I_j$,
\begin{align}
\label{uj} u_j=2^{\kappa m_*j}(m_*j+1)^{-\alpha_u}, \quad
w_j=2^{-\kappa m_*j}(m_*j+1)^{-\alpha_w}.
\end{align}
In addition, we suppose that
\begin{align}
\label{muck_1} \kappa>\frac{\theta}{q} \quad \text{or} \quad
\kappa=\frac{\theta}{q}, \quad \alpha_w>\frac{1-\gamma}{q}.
\end{align}

We set $\mathfrak{Z}=(p, \, q, \, u, \, w, \, h, \, m_*, \, c_*)$.

Denote $\tilde \alpha = \alpha_u+\alpha_w$ for
$\kappa>\frac{\theta}{q}$, $\tilde \alpha= \alpha_u+\alpha_w
-\frac 1q$ for $\kappa=\frac{\theta}{q}$.

In \cite{vas_entr} estimates for $e_n(S_{u,w,{\cal A}}:l_p({\cal
A}) \rightarrow l_q({\cal A}))$ were obtained in the case $\tilde
\alpha \ne \frac 1p-\frac 1q$. The case $\tilde \alpha = \frac
1p-\frac 1q$, $q=\infty$ was considered by Lifshits and Linde
\cite{lifs_m, l_l1}. To this end estimates for entropy numbers of
the dual operator $S^*_{u,w,{\cal A}}:l_1({\cal A}) \rightarrow
l_{p'}({\cal A})$ were obtained and the result of the paper
\cite{pajor_tomczak} was applied.

\begin{Trm}
\label{discr_t1} Let $\theta>0$, $1<p<q<\infty$,
$\tilde\alpha=\frac 1p-\frac 1q$. Then
$$
e_n(S_{u,w,{\cal A}}:l_p({\cal A}) \rightarrow l_q({\cal A}))
\underset{\mathfrak{Z}}{\asymp} n^{\frac 1q-\frac 1p}.
$$
\end{Trm}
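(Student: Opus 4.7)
The plan is to derive the upper estimate from Theorem \ref{main_tree} via the discrete version in Remark \ref{discr_rem}, and the lower estimate from Schütt's theorem (Theorem \ref{shutt_trm}) by restricting the summation operator to the vertices of a single deep level.

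For the upper bound I take $\Omega={\bf V}({\cal A})$ equipped with counting measure, so each singleton $\hat F(\eta_{j,i})=\{\eta_{j,i}\}$ is an atom. For $f\in l_p({\cal A})$ I set $F(\xi):=\sum_{\xi'\le\xi}u(\xi')f(\xi')$; I then define $Y_q(E)$ by $\|F\|_{Y_q(E)}=\bigl(\sum_{\xi\in E}w(\xi)^q|F(\xi)|^q\bigr)^{1/q}$, define $X_p(E)$ by $\|F\|_{X_p(E)}=\bigl(\sum_{\xi\in E}|f(\xi)|^p\bigr)^{1/p}$ (so that the $X_p$-value on an atom is the discrete derivative $f(\xi)=u(\xi)^{-1}(F(\xi)-F(\xi^-))$), and let ${\cal P}(\Omega)$ be the one-dimensional space of constants. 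Then the entropy numbers of $I:\hat X_p(\Omega)\to Y_q(\Omega)$ coincide with those of $S_{u,w,{\cal A}}:l_p({\cal A})\to l_q({\cal A})$, and properties 1--5 are immediate. This identification slightly bends property 2---the value $\|F\|_{X_p(\{\xi\})}$ is treated as an independent derivative value rather than a functional of $F|_{\{\xi\}}$---but Remark \ref{discr_rem} is formulated precisely to accommodate this discrete setting.

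With the setup in place I verify Assumptions \ref{supp1} and \ref{supp3}. For \ref{supp1}, the projection $P_{\xi_*}F\equiv F(\xi_*)$ maps $Y_q\to{\cal P}$ continuously since (\ref{muck_1}) forces $w\in l_q({\bf V}({\cal A}_{\xi_*}))$, and the telescoping identity $F(\xi)-F(\xi_*)=\sum_{\xi_*<\xi'\le\xi}u(\xi')f(\xi')$ yields (\ref{hfxi}) with $c_2=1$. For \ref{supp3}, the two-sided bound on $|{\bf V}^{{\cal A}}_l(\eta_{j,i})|$ combined with (\ref{def_h}) gives (\ref{card_jj}) with the prescribed $\theta,\gamma,m_*,\tau$; the weights (\ref{uj}) match (\ref{uetaji}); the hypothesis $\tilde\alpha=1/p-1/q$ together with (\ref{muck_1}) supplies items 1, 2, and 4. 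For item 5 in the critical case $\kappa=\theta/q$, the same two-sided growth bound lets me take $\hat{\cal A}={\cal A}$ and $\hat\xi=\eta_{j,i}$, so (\ref{card_jj1}) and the comparison inequality hold automatically with $c_3=1$. Remark \ref{discr_rem} and Theorem \ref{main_tree} then give $e_n(S_{u,w,{\cal A}}:l_p({\cal A})\to l_q({\cal A}))\underset{\mathfrak{Z}}{\lesssim} n^{1/q-1/p}$.

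For the lower bound I fix $j_0$ with $m_*j_0\asymp n$, pick $N\asymp 2^{\theta m_*j_0}\asymp 2^{\theta n}$ vertices at level $j_0$, and test on sequences $f$ supported on them. On each subtree rooted at $\eta_{j_0,i_k}$ one has $(S_{u,w,{\cal A}}f)(\xi)=w(\xi)u_{j_0}f(\eta_{j_0,i_k})$, so $\|S_{u,w,{\cal A}}f\|_{l_q({\cal A})}\asymp u_{j_0}\Lambda_{j_0}^{1/q}\|(f(\eta_{j_0,i_k}))_k\|_{l_q^N}$ with $\Lambda_{j_0}:=\sum_{\xi\ge\eta_{j_0,i_k}}w(\xi)^q$ finite by (\ref{muck_1}). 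A levelwise computation using the exponents in (\ref{uj}) gives $u_{j_0}\Lambda_{j_0}^{1/q}\asymp(m_*j_0)^{-\tilde\alpha}=(m_*j_0)^{-(1/p-1/q)}\asymp n^{1/q-1/p}$ in both sub-cases of (\ref{muck_1}); since $n\asymp\log N$, Theorem \ref{shutt_trm} gives $e_n(I_N:l_p^N\to l_q^N)\asymp 1$, and the two contributions combine to produce the matching lower bound. The main obstacle is verifying Assumption \ref{supp3}, especially item 5 in the logarithmically critical case $\kappa=\theta/q$, where one needs both the upper and lower $h$-set bounds on the tree and the tail condition (\ref{muck_1}); once this is in place the upper estimate is a mechanical application of Theorem \ref{main_tree}.
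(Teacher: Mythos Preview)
Your proposal is correct and follows essentially the same route as the paper: the upper estimate is obtained by casting the discrete summation operator into the abstract framework of \S 2--\S 3 via Remark~\ref{discr_rem} (verifying Assumptions~\ref{supp1} and~\ref{supp3}, with the two-sided $h$-growth hypothesis supplying item~5 by taking $\hat{\cal A}={\cal A}$), and the lower estimate is the standard one via Sch\"{u}tt's theorem on a single level $j_0$ with $m_*j_0\asymp n$, which is what the paper means by ``proved similarly as in \cite{vas_entr}''. Your explicit computation of $u_{j_0}\Lambda_{j_0}^{1/q}\asymp n^{1/q-1/p}$ in both sub-cases of (\ref{muck_1}) and the acknowledgment of the formal tension with property~2 (which the paper also does not address explicitly in the discrete setting) are both appropriate.
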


Now we suppose that $\theta=0$, $\kappa>0$, and instead of
(\ref{uj}) the following condition holds:
\begin{align}
\label{uj00} u_j=2^{\kappa m_*j}(m_*j+1)^{\alpha} [\log
(m_*j+1)]^{-\lambda_u}, \quad w_j=2^{-\kappa
m_*j}(m_*j+1)^{-\alpha}[\log (m_*j+1)]^{-\lambda_w}
\end{align}
with $\alpha\in \R$.

\begin{Trm}
\label{discr_t2} Let $\theta=0$, $\gamma\le 0$, $1<p<q<\infty$,
$\kappa>0$, $\lambda_u+\lambda_w=\frac 1p-\frac 1q$. Then
$$
e_n(S_{u,w,{\cal A}}:l_p({\cal A}) \rightarrow l_q({\cal A}))
\underset{\mathfrak{Z}}{\asymp} n^{\frac 1q-\frac 1p}.
$$
\end{Trm}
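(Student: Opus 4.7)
The plan is to invoke Theorem~\ref{main_tree} via the discrete reduction of Section~3, so that the proof runs in complete parallel to Theorem~\ref{discr_t1}, which covers the analogous situation under $\theta>0$.

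First I would realise $S_{u,w,{\cal A}}$ as an embedding in the framework of Section~2. Take $\Omega={\bf V}({\cal A})$ with the counting measure, $\hat F(\xi)=\{\xi\}$ (so every $\hat F(\xi)$ is an atom), and let ${\cal P}(\Omega)$ be the one-dimensional space of constants. For $\tilde f\in l_p({\cal A})$ define its primitive $F(\xi)=\sum_{\xi'\le\xi}u(\xi')\tilde f(\xi')$; equivalently $\tilde f(\xi)=u(\xi)^{-1}(F(\xi)-F(\xi^-))$ with the convention $F(\eta_{0,1}^-):=F(\eta_{0,1})$, so that constants lie in the kernel of the discrete derivative. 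Put
$$
\|F\|_{X_p(E)}=\Bigl(\sum_{\xi\in E}|\tilde f(\xi)|^p\Bigr)^{1/p},\qquad \|F\|_{Y_q(E)}=\Bigl(\sum_{\xi\in E}|w(\xi)F(\xi)|^q\Bigr)^{1/q}.
$$
Properties 1--5 of Section~2 are immediate in this discrete setting, ${\cal P}(\Omega)$ is precisely the kernel of $\|\cdot\|_{X_p(\Omega)}$, and under $\tilde f\leftrightarrow F$ the embedding $I:\hat X_p(\Omega)\to Y_q(\Omega)$ is identified with $S_{u,w,{\cal A}}:l_p({\cal A})\to l_q({\cal A})$, so $e_n(I)=e_n(S_{u,w,{\cal A}})$.

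For the upper bound I would verify the hypotheses of Section~3. Assumption~\ref{supp1} holds with $P_{\xi_*}F\equiv F(\xi_*^-)$, since
$$
\|F-P_{\xi_*}F\|_{Y_q(\{\xi\})}=w(\xi)\Bigl|\sum_{\xi_*\le\xi'\le\xi}u(\xi')\tilde f(\xi')\Bigr|\le w(\xi)\sum_{\xi_*\le\xi'\le\xi}u(\xi')\|\tilde f\|_{X_p(\{\xi'\})},
$$
which is exactly~(\ref{hfxi}); Assumption~\ref{supp2} is replaced by the atom condition of Remark~\ref{discr_rem}. For Assumption~\ref{supp4} take $\alpha_*=\alpha$ and $\lambda_u,\lambda_w$ from~(\ref{uj00}); the conditions $\kappa>0$, $\gamma\le 0$ and $\lambda_u+\lambda_w=\frac 1p-\frac 1q$ are given, while~(\ref{card_jj1_1}) follows from the $h$-set hypothesis on ${\cal A}$ with $h(t)=|\log t|^\gamma\tau(|\log t|)$, choosing $\nu$ large enough that the slowly varying $\tau$ is absorbed into $|\log|^\nu$ via Lemma~\ref{sum_lem}. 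Theorem~\ref{main_tree} then yields $e_n(S_{u,w,{\cal A}})\underset{{\mathfrak Z}}{\lesssim}n^{1/q-1/p}$.

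The matching lower bound is obtained by adapting the methods of~\cite{vas_entr} (which handled the non-critical case $\tilde\alpha\ne\frac 1p-\frac 1q$) and~\cite{lifs_m,l_l1} (which treated the critical case $q=\infty$): one constructs, via a suitable multi-level antichain in the tree, a scaled copy of an operator whose entropy is controlled by Theorem~\ref{shutt_trm}, the logarithmic weights $u_jw_j\asymp(\log m_*j)^{-(1/p-1/q)}$ together with the layer cardinalities combining to give $\gtrsim n^{1/q-1/p}$. The main obstacle is the bookkeeping in Assumption~\ref{supp4} when $\tau$ is an arbitrary slowly varying function: the exponent $\nu$ must be chosen compatibly, possibly via a minor extension of Assumption~\ref{supp4} allowing an absolutely continuous slowly varying prefactor in place of $|\log|^\nu$ (analogous to the role of $\tau$ in Assumption~\ref{supp3}); once this is arranged, the substantive analytic content for the upper bound is already contained in Theorem~\ref{main_tree}.
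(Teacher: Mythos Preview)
Your approach coincides with the paper's: Section~5 opens with ``Applying Remark~\ref{discr_rem}, we obtain estimates for entropy numbers of weighted summation operators on trees'' and then states Theorems~\ref{discr_t1} and~\ref{discr_t2} without further argument, so your write-up simply supplies the details (the discrete realisation of $X_p/Y_q$, the verification of Assumption~\ref{supp1}, and the appeal to Assumption~\ref{supp4}) that the paper leaves implicit. Your observation about the general slowly varying $\tau$ versus the specific factor $|\log(m_*j)|^\nu$ in~(\ref{card_jj1_1}) is valid --- the paper is silent on this point as well --- and the fix you propose (allowing a general slowly varying prefactor in Assumption~\ref{supp4}, which the proof of Lemma~\ref{lem_est1} and estimate~(\ref{c_vgt_1}) accommodate without change) is the correct one; one minor slip is your convention $F(\eta_{0,1}^-):=F(\eta_{0,1})$, which would force $\tilde f(\eta_{0,1})=0$ --- the identification of $e_n(I)$ with $e_n(S_{u,w,{\cal A}})$ is really up to the rank-one projection $\hat P$, which does not affect the asymptotics.
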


\begin{Biblio}
\bibitem{besov_peak_width} O.V. Besov, ``Kolmogorov widths
of Sobolev classes on an irregular domain'', {\it Proc. Steklov
Inst. Math.}, {\bf 280} (2013), 34-45.

\bibitem{besov_il1} O.V. Besov, V.P. Il'in, S.M. Nikol'skii,
{\it Integral representations of functions, and imbedding
theorems}. ``Nauka'', Moscow, 1996. [Winston, Washington DC;
Wiley, New York, 1979].

\bibitem{birm} M.Sh. Birman and M.Z. Solomyak, ``Piecewise polynomial
approximations of functions of classes $W^\alpha_p$'', {\it Mat.
Sb.} {\bf 73}:3 (1967), 331-–355.

\bibitem{pajor_tomczak} J. Bourgain, A. Pajor, S. Szarek, N.
Tomczak-Jaegermann, ``On the duality problem for entropy numbers
of operators'', {\it Geometric Aspects of Functional Analysis,
Lecture Notes in Mathematics} {\bf 1376}, 50--63.

\bibitem{m_bricchi1} M. Bricchi, ``Existence and properties of
h-sets'', {\it Georgian Mathematical Journal}, {\bf 9}:1 (2002),
13–-32.

\bibitem{carl_steph} B. Carl, I. Stephani, {\it Entropy, Compactness, and
the Approximation of Operators}. Cambridge Tracts in Mathematics,
V. 98. Cambridge: Cambridge University Press, 1990.

\bibitem{edm_netr1} D.E. Edmunds, Yu.V. Netrusov, ``Entropy numbers of operators
acting between vector-valued sequence spaces'', {\it Math.
Nachr.}, {\bf 286}:5--6 (2013), 614--630.

\bibitem{edm_netr2} D.E. Edmunds, Yu.V. Netrusov, ``Sch\"{u}tt's theorem for
vector-valued sequence spaces'', {\it J. Approx. Theory}, {\bf
178} (2014), 13--21.

\bibitem{edm_trieb_book} D.E. Edmunds, H. Triebel, {\it Function spaces,
entropy numbers, differential operators}. Cambridge Tracts in
Mathematics, {\bf 120} (1996). Cambridge University Press.

\bibitem{har94_1} D.D. Haroske, ``Entropy numbers in weighted function spaces and
eigenvalue distributions of some degenerate pseudodifferential
operators. I'', {\it Math. Nachr.}, {\bf 167} (1994), 131--156.

\bibitem{har94_2} D.D. Haroske, ``Entropy numbers in weighted function spaces and
eigenvalue distributions of some degenerate pseudodifferential
operators. II'', {\it Math. Nachr.}, {\bf 168} (1994), 109--137.

\bibitem{har_tr05} D.D. Haroske, H. Triebel, ``Wavelet bases and entropy numbers in
weighted function spaces'', {\it Math. Nachr.}, {\bf 278}:1--2
(2005), 108--132.

\bibitem{haroske} D.D. Haroske, L. Skrzypczak, ``Entropy and approximation
numbers of function spaces with Muckenhoupt weights'', {\it Rev.
Mat. Complut.}, {\bf 21}:1 (2008), 135--177.

\bibitem{haroske2} D.D. Haroske, L. Skrzypczak, ``Entropy and approximation numbers of embeddings of
function spaces with Muckenhoupt weights, II.  General weights'',
{\it Ann. Acad. Sci. Fenn. Math.}, {\bf 36}:1 (2011), 111–138.

\bibitem{haroske3} D.D. Haroske, L. Skrzypczak, ``Entropy numbers of embeddings of
function spaces with Muckenhoupt weights, III. Some limiting
cases,'' {\it J. Funct. Spaces Appl.} {\bf 9}:2 (2011), 129–178.

\bibitem{kolm_tikh1} A.N. Kolmogorov, V.M. Tikhomirov, ``$\varepsilon$-entropy and $\varepsilon$-capacity
of sets in function spaces'' (Russian) {\it Uspehi Mat. Nauk},
{\bf 14}:2(86) (1959), 3--86.

\bibitem{kuhn_01_g} T. K\"{u}hn, ``Entropy numbers of diagonal operators of logarithmic
type'', {\it Georgian Math. J.} {\bf 8}:2 (2001), 307-318.

\bibitem{kuhn_01} T. K\"{u}hn, ``A lower estimate for entropy
numbers'', {\it J. Appr. Theory}, {\bf 110} (2001), 120--124.

\bibitem{kuhn_05} T. K\"{u}hn, ``Entropy numbers of general diagonal
operators'', {\it Rev. Mat. Complut.}, {\bf 18}:2 (2005),
479--491.

\bibitem{kuhn4} Th. K\"{u}hn, H.-G. Leopold, W. Sickel, and L. Skrzypczak. ``Entropy numbers of embeddings
of weighted Besov spaces'', {\it Constr. Approx.}, {\bf 23}
(2006), 61–77.

\bibitem{kuhn_leopold} Th. K\"{u}hn, H.-G. Leopold, W. Sickel, L.
Skrzypczak, ``Entropy numbers of embeddings of weighted Besov
spaces II'', {\it Proc. Edinburgh Math. Soc.} (2) {\bf 49} (2006),
331--359.

\bibitem{kuhn_tr_mian} T. K\"{u}hn, ``Entropy Numbers in Weighted Function Spaces.
The Case of Intermediate Weights'', {\it Proc. Steklov Inst.
Math.}, {\bf 255} (2006), 159--168.

\bibitem{kuhn5} Th. K\"{u}hn, H.-G. Leopold, W. Sickel, and L. Skrzypczak, ``Entropy numbers of embeddings
of weighted Besov spaces III. Weights of logarithmic type'', {\it
Math. Z.}, {\bf 255}:1 (2007), 1–15.

\bibitem{kuhn_08} T. K\"{u}hn, ``Entropy numbers in sequence spaces with an application to weighted
function spaces'', {\it J. Appr. Theory}, {\bf 153} (2008),
40--52.

\bibitem{lifs_m} M.A. Lifshits, ``Bounds for entropy numbers for some critical
operators'', {\it Trans. Amer. Math. Soc.}, {\bf 364}:4 (2012),
1797–1813.

\bibitem{l_l} M.A. Lifshits, W. Linde, ``Compactness properties of weighted summation operators
on trees'', {\it Studia Math.}, {\bf 202}:1 (2011), 17--47.

\bibitem{l_l1} M.A. Lifshits, W. Linde, ``Compactness properties of weighted summation operators
on trees --- the critical case'', {\it Studia Math.}, {\bf 206}:1
(2011), 75--96.

\bibitem{lif_linde} M.A. Lifshits, W. Linde, ``Approximation and entropy numbers of Volterra operators
with application to Brownian motion'', {\it Mem. Amer. Math.
Soc.}, {\bf 157}:745, Amer. Math. Soc., Providence, RI, 2002.

\bibitem{step_lom} E.N. Lomakina, V.D. Stepanov, ``Asymptotic estimates for the approximation and entropy
numbers of the one-weight Riemann–Liouville operator'', {\it Mat.
Tr.}, {\bf 9}:1 (2006), 52–100 [{\it Siberian Adv. Math.}, {\bf
17}:1 (2007), 1–36].

\bibitem{p_mattila} P. Mattila, {\it Geometry of sets
and measures in Euclidean spaces}. Cambridge Univ. Press, 1995.

\bibitem{mieth_15} T. Mieth, ``Entropy and approximation numbers of embeddings of
weighted Sobolev spaces'', {\it J. Appr. Theory}, {\bf 192}
(2015), 250--272.

\bibitem{piet_op} A. Pietsch, {\it Operator ideals}. Mathematische Monographien
[Mathematical Monographs], 16. Berlin, 1978. 451 pp.

\bibitem{c_schutt} C. Sch\"{u}tt, ``Entropy numbers of diagonal operators between symmetric
Banach spaces'', {\it J. Appr. Theory}, {\bf 40} (1984), 121--128.

\bibitem{tikh_entr} V.M. Tikhomirov, ``The $\varepsilon$-entropy of certain classes of periodic functions''
(Russian) {\it Uspehi Mat. Nauk} {\bf 17}:6 (108) (1962),
163--169.

\bibitem{tr_jat} H. Triebel, ``Entropy and approximation numbers of limiting embeddings, an approach
via Hardy inequalities and quadratic forms'', {\it J. Approx.
Theory}, {\bf 164}:1 (2012), 31--46.

\bibitem{vas_width_raspr} A.A. Vasil'eva, ``Widths of function classes on sets with tree-like
structure'', {\it J. Appr. Theory}, {\bf 192} (2015), 19--59.

\bibitem{vas_entr} A.A. Vasil'eva, ``Entropy numbers of embedding operators of
weighted Sobolev spaces with weights that are functions of
distance from some $h$-set'', arXiv:1503.00144.

\bibitem{vas_har_tree} A.A. Vasil'eva, ``Estimates for norms of two-weighted summation operators
on a tree under some restrictions on weights'', {\it Math. Nachr.}
1--24 (2015) /DOI 10.1002/mana.201300355.

\bibitem{vas_john} A.A. Vasil'eva, ``Widths of weighted Sobolev classes on a John domain'',
{\it Proc. Steklov Inst. Math.}, {\bf 280} (2013), 91--119.

\bibitem{vas_vl_raspr} A.A. Vasil'eva, ``Embedding theorem for weighted Sobolev
classes on a John domain with weights that are functions of the
distance to some $h$-set'', {\it Russ. J. Math. Phys.}, {\bf 20}:3
(2013), 360--373.

\bibitem{vas_vl_raspr2} A.A. Vasil'eva, ``Embedding theorem for weighted Sobolev
classes on a John domain with weights that are functions of the
distance to some $h$-set'', {\it Russ. J. Math. Phys.} {\bf 21}:1
(2014), 112--122.

\bibitem{vas_sib} A.A. Vasil'eva, ``Some sufficient conditions for
embedding a weighted Sobolev class on a John domain'', {\it Sib.
Mat. Zh.}, {\bf 56}:1 (2015), 65--81.

\bibitem{vas_w_lim} A.A. Vasil'eva, ``Widths of weighted Sobolev classes
with weights that are functions of the distance to some $h$-set:
some limit cases'', {\it Russ. J. Math. Phys.}, {\bf 22}:1 (2015),
127--140.

\bibitem{vas_bes} A.A. Vasil'eva, ``Kolmogorov and linear
widths of the weighted Besov classes with singularity at the
origin'', {\it J. Appr. Theory}, {\bf 167} (2013), 1--41.

\end{Biblio}

\end{document}